\documentclass[11pt]{article}

\usepackage[margin=1.1in]{geometry}
\usepackage[T1]{fontenc}
\usepackage[utf8]{inputenc}
\usepackage{lmodern}
\IfFileExists{microtype.sty}{\usepackage{microtype}}{}
\usepackage{amsmath,amssymb,amsthm}
\usepackage{placeins}
\usepackage{xcolor}
\usepackage{graphicx}
\usepackage[numbers,sort&compress]{natbib}
\usepackage[
  colorlinks=true,
  linkcolor=blue,
  citecolor=blue,
  urlcolor=blue,
  pdftitle={Mirror and knockoff+ thresholds under dependence},
  pdfauthor={Xianyang Zhang}
]{hyperref}

\newcommand{\E}{\mathbb{E}}

\newcommand{\FDP}{\operatorname{FDP}}
\newcommand{\FDR}{\operatorname{FDR}}

\newcommand{\ind}{\mathbf{1}}
\newcommand{\Hnull}{\mathcal H_0}
\newcommand{\Halt}{\mathcal H_1}
\newcommand{\Rset}{\mathcal R}
\newcommand{\Prb}{\mathbb P}
\newcommand{\barPhi}{\overline\Phi}
\newcommand{\Power}{\operatorname{Power}}
\allowdisplaybreaks

\newcommand{\rev}[1]{{\color{black}#1}}
\newenvironment{revision}{\begingroup\color{black}}{\endgroup}
\newcommand{\newrev}[1]{{\color{black}#1}}
\newenvironment{newrevision}{\begingroup\color{black}}{\endgroup}

\ExplSyntaxOn
\seq_new:N \g_mirror_appendix_a_seq
\seq_new:N \g_mirror_appendix_b_seq
\seq_new:N \g_mirror_appendix_c_seq
\cs_new_protected:Npn \mirror_store_proof:nnnn #1#2#3#4
 {
  \str_case:nnF {#1}
   {
    {A}{\seq_gput_right:Nn \g_mirror_appendix_a_seq
       {\begin{revision}\subsection{#2}\label{#3}\begin{proof}#4\end{proof}\end{revision}}}
    {B}{\seq_gput_right:Nn \g_mirror_appendix_b_seq
       {\begin{revision}\subsection{#2}\label{#3}\begin{proof}#4\end{proof}\end{revision}}}
    {C}{\seq_gput_right:Nn \g_mirror_appendix_c_seq
       {\begin{revision}\subsection{#2}\label{#3}\begin{proof}#4\end{proof}\end{revision}}}
   }
   {\PackageError{mirror-paper}{Unknown deferred-proof appendix #1}{Use A, B, or C.}}
 }
\cs_new_protected:Npn \mirror_store_legacy_proof:nnnn #1#2#3#4
 {
  \str_case:nnF {#1}
   {
    {A}{\seq_gput_right:Nn \g_mirror_appendix_a_seq
       {\begin{revision}\subsection{#2}\label{#3}\end{revision}\begin{proof}#4\end{proof}}}
    {B}{\seq_gput_right:Nn \g_mirror_appendix_b_seq
       {\begin{revision}\subsection{#2}\label{#3}\end{revision}\begin{proof}#4\end{proof}}}
    {C}{\seq_gput_right:Nn \g_mirror_appendix_c_seq
       {\begin{revision}\subsection{#2}\label{#3}\end{revision}\begin{proof}#4\end{proof}}}
   }
   {\PackageError{mirror-paper}{Unknown deferred-proof appendix #1}{Use A, B, or C.}}
 }
\NewDocumentEnvironment{deferredproof}{m m m +b}
 {
  \par\noindent\rev{\emph{Proof.}~See~Appendix~\ref{#3}.}\par
  \mirror_store_proof:nnnn {#1}{#2}{#3}{#4}
 }
 {}
\NewDocumentEnvironment{storedproof}{m m m +b}
 {
  \mirror_store_proof:nnnn {#1}{#2}{#3}{#4}
 }
 {}
\NewDocumentEnvironment{deferredlegacyproof}{m m m +b}
 {
  \par\noindent\rev{\emph{Proof.}~See~Appendix~\ref{#3}.}\par
  \mirror_store_legacy_proof:nnnn {#1}{#2}{#3}{#4}
 }
 {}
\cs_new_protected:Npn \mirror_store_new_proof:nnnn #1#2#3#4
 {
  \str_case:nnF {#1}
   {
    {A}{\seq_gput_right:Nn \g_mirror_appendix_a_seq
       {\begin{newrevision}\subsection{#2}\label{#3}\begin{proof}#4\end{proof}\end{newrevision}}}
    {B}{\seq_gput_right:Nn \g_mirror_appendix_b_seq
       {\begin{newrevision}\subsection{#2}\label{#3}\begin{proof}#4\end{proof}\end{newrevision}}}
    {C}{\seq_gput_right:Nn \g_mirror_appendix_c_seq
       {\begin{newrevision}\subsection{#2}\label{#3}\begin{proof}#4\end{proof}\end{newrevision}}}
   }
   {\PackageError{mirror-paper}{Unknown deferred-proof appendix #1}{Use A, B, or C.}}
 }
\NewDocumentEnvironment{deferrednewproof}{m m m +b}
 {
  \par\noindent\newrev{\emph{#2.}~See~Appendix~\ref{#3}.}\par
  \mirror_store_new_proof:nnnn {#1}{#2}{#3}{#4}
 }
 {}
\NewDocumentCommand{\printappendixproofs}{m}
 {
  \str_case:nnF {#1}
   {
    {A}{\seq_use:Nn \g_mirror_appendix_a_seq {}}
    {B}{\seq_use:Nn \g_mirror_appendix_b_seq {}}
    {C}{\seq_use:Nn \g_mirror_appendix_c_seq {}}
   }
   {\PackageError{mirror-paper}{Unknown deferred-proof appendix #1}{Use A, B, or C.}}
 }
\ExplSyntaxOff

\theoremstyle{definition}

\newtheorem{assumption}{Assumption}

\theoremstyle{plain}
\newtheorem{theorem}{Theorem}
\newtheorem{proposition}{Proposition}
\newtheorem{lemma}{Lemma}
\newtheorem{corollary}{Corollary}

\theoremstyle{remark}
\newtheorem{remark}{Remark}

\title{Mirror and knockoff+ thresholds under dependence}
\author{Xianyang Zhang\thanks{Email: zhangxiany@stat.tamu.edu}\\[2pt]
\normalsize Department of Statistics, Texas A\&M University}
\date{August 2026}

\begin{document}

\maketitle

\begin{abstract}
\begin{revision}
Many multiple-testing procedures control the false discovery rate (FDR) by comparing the two tails of a null distribution. At a fixed cutoff, marginal symmetry makes this natural. Mirror and knockoff+ thresholds select the cutoff from the same data, so the standard finite-sample guarantee uses a stronger property: conditional on magnitudes and nonnull scores, null signs are independent fair coins.

Failure can be severe without this property. Models satisfying positive regression dependence on a subset (PRDS) can have exactly uniform null $p$-values and large FDR. Under every fixed positive Gaussian equicorrelation, the all-null FDR converges to one half. Opposing loadings in Gaussian factor models can make FDR and power arbitrarily close to one; near-total failure also occurs for exchangeable, pairwise-uncorrelated scores. At a nominal input level $q<1/2$, no deterministic rule based only on the two tail counts can both reject and control FDR uniformly over our class if more discoveries or fewer controls cannot make rejection harder.

We give finite-sample repairs based on joint sign information. Conditional sign odds may be bounded outside an exceptional event or averaged over negative controls; neither route uniformly dominates, and the integrated bounds are sharp. Independent calibration data or a specified Gaussian joint model yield valid adjusted levels. Simulations show that integration retains more power under diffuse Gaussian dependence, whereas exceptional-event calibration is more powerful when very large odds occur only for rare aligned signs; unadjusted FDR exceeds the target in both settings. Covariance alone is insufficient outside a specified joint model. Thus the relevant boundary is not marginal symmetry but joint information that remains valid after adaptive cutoff selection.
\end{revision}
\end{abstract}

\begin{center}
\textit{Keywords}: False discovery rate, Dependence, Knockoff+, Mirror procedures, PRDS, Sign flips.
\end{center}

\section{Introduction}

When many hypotheses are tested at once, we would like to report as many discoveries as possible without reporting too many false ones.  The false discovery rate (FDR) measures the expected proportion of false discoveries among all reported discoveries \citep{benjamini-hochberg-1995}.

A common way to estimate the number of false discoveries is to reserve part of the null distribution as a control region.  For $p$-values, observations near one can be used to estimate how many null $p$-values lie near zero.  For signed scores, large negative values can be used as controls for large positive values.  This idea appears in Storey-type estimators \citep{storey-2002,storey-taylor-siegmund-2004}, the knockoff+ filter \citep{barber-candes-2015,candes-et-al-2018}, and several procedures based on mirror statistics \citep{xing-zhao-liu-2023,dai-et-al-2023a,dai-et-al-2023b,deng-he-zhang-2024}.

At a fixed threshold, the idea seems straightforward.  Under a null hypothesis, the two sides of a symmetric distribution should be equally likely.  The number of observations on the negative, or control, side should therefore tell us roughly how many null observations appear on the positive, or discovery, side.

There is, however, an important complication: the threshold is chosen after looking at the data.  Suppose many null scores share a common source of noise.  On one realization, this common noise may push a large number of them to the positive side at the same time.  The procedure then sees many apparent discoveries and few negative controls.  This can happen even when every individual null score is symmetric about zero.

Valid knockoff statistics avoid this problem through a much stronger symmetry.  Conditional on the score magnitudes and on \rev{the scores for the nonnull hypotheses (the genuine signals)}, the signs of the null statistics behave as independent fair coin flips \citep{barber-candes-2015,candes-et-al-2018}.  Thus, after the hypotheses are ordered by their score magnitudes, each newly revealed null sign is still a fresh coin flip.  \rev{This keeps the FDR argument valid when the process stops at a data-chosen cutoff, which is the role of optional stopping in the knockoff+ proof.}

\rev{Marginal symmetry alone does not imply this property.  Nor do symmetry of the whole score vector about zero, Gaussianity, exchangeability (invariance under permuting the coordinates), or positive regression dependence on a subset (PRDS) on their own.  PRDS is a standard positive-dependence condition; roughly, it says that small $p$-values tend to occur together.}  It is sufficient for the Benjamini--Hochberg procedure \citep{benjamini-yekutieli-2001}, but it need not be sufficient for a procedure that adaptively compares the two tails of the null distribution.

This paper asks a simple question:
\begin{quote}
What can happen when the mirror or knockoff+ threshold is applied to dependent scores or $p$-values that do not have the conditional sign-flip property?
\end{quote}
\begin{revision}
The answer has two parts.  First, dependence can push many null scores to the discovery side together while every null distribution remains individually well behaved.  Second, the two tail counts alone cannot tell whether the resulting imbalance is an ordinary fluctuation or a consequence of shared dependence.  A valid repair must therefore account for dependence among the null signs, not just use a more conservative function of the same two counts.
\end{revision}

\begin{revision}
We distinguish the threshold from the procedure that justifies it.  We call the upper-versus-lower-tail rule for $p$-values the \emph{mirror threshold}, and its signed-score form the \emph{knockoff+ threshold}.  The \emph{knockoff+ filter} is the full procedure that constructs knockoff variables and scores with the required conditional sign-flip property.  Our examples concern the threshold applied to generic dependent inputs, not the validity of the full knockoff filter.
\end{revision}

\subsection{A peek at the results}

\begin{revision}
The first results show two ways dependence can defeat the threshold.  In the PRDS construction, a hidden binary state moves null $p$-values together while leaving each one uniform.  The failure persists with genuine alternatives, and binomial calculations give FDR and power bounds.  A second, sparse mechanism occurs when an early run of positive signs makes the ordered threshold pass even though no fixed cutoff crosses asymptotically.

For Gaussian scores, write
\[
 W_i=\sqrt\rho\,Z_0+\sqrt{1-\rho}\,Z_i,
\]
where $Z_0,Z_1,\ldots$ are independent standard normal variables and $Z_0$ is the shared factor.  The coefficient on $Z_0$ is a factor loading.  With equal positive loadings and all hypotheses null, every fixed $\rho>0$ gives FDR converging exactly to one half: conditional on $Z_0=z\ne0$, the rejection probability converges to $\ind\{z>0\}$.  Under the stated conditional-independence assumption on nonnull scores, same-sign null loadings also give the finite-sample upper bound $(1+q)/2$.  Opposing null loadings can instead make FDR and power approach one, even with standardized nulls and small pairwise correlations.  An exchangeable construction gives near-total failure with zero pairwise correlation.

No useful correction can rely only on the discovery and control counts over the broad class studied here.  Valid adjustments instead use joint null-sign information: total variation gives an additive FDR bound, while coordinatewise conditional odds yield complementary exceptional-event and integrated negative-side bounds.  An explicit construction attains the active-set integrated bound and makes its level-independent upper bound asymptotically sharp.  A specified Gaussian covariance model makes both odds-based factors computable.  Two dependence sweeps confirm that neither adjustment uniformly dominates: integration is more powerful under a diffuse Gaussian shared factor, while exceptional-event calibration is more powerful under a rare aligned-sign mixture.  In both sweeps, the unadjusted FDR exceeds $0.1$.
\end{revision}

\subsection{Relationship with existing methods}

\begin{revision}
At a candidate cutoff, the mirror rule estimates false discoveries by adding one to the control-side count.  This is closely related to a modified Storey estimate, which also uses large $p$-values to estimate false discoveries.  Fixed-cutoff versions can fail under strong dependence \citep{blanchard-roquain-2009}; here the cutoff moves with the data.

Existing mirror methods obtain validity in several ways.  Symmetrized data aggregation uses sample splitting, screening, and aggregation \citep{du-guo-sun-zou-2023}.  Other procedures combine splitting or added Gaussian noise with weak-dependence assumptions \citep{xing-zhao-liu-2023,dai-et-al-2023a,dai-et-al-2023b}; standard weak-dependence conditions, known as mixing conditions, justify related covariate-adaptive rules \citep{zhang-chen-2022}.

Masking part of each statistic and revealing it gradually provide another route, as in AdaPT, STAR, and the joint mirror procedure \citep{lei-fithian-2018,lei-ramdas-fithian-2021,deng-he-zhang-2024}.  Conditional calibration instead uses a sufficiently specified joint null law \citep{fithian-lei-2022}; conditioning can also relax the assumptions needed for knockoff construction \citep{huang-janson-2020}.  Recent work develops further mirror procedures under explicit dependence conditions \citep{guo-lin-liu-2026,wu-yuan-jiang-2026}.

Two comparisons are especially close to our results.  Robust knockoff theory measures how symmetry errors affect the FDR one coordinate at a time, conditioning on the others \citep{barber-candes-samworth-2020}; our conditional-odds bound uses the same principle.  \citet{lei-2026} shows that, for a broad class of one-common-factor Gaussian models under the Benjamini--Hochberg procedure, the worst-case FDR scales as $q\sqrt{\log(1/q)}$, up to constant factors, as $q\downarrow0$.  In contrast, Theorem~\ref{thm:gaussian-nearone} implies that the worst-case FDR of the mirror threshold need not tend to zero.  In particular, no universal constant $C$ can satisfy $\FDR\le Cq$ uniformly over $q$, dimension, mean vectors, and Gaussian covariance matrices.

\end{revision}
\section{Mirror and knockoff+ thresholds}
\label{sec:threshold}

Let $H_1,\ldots,H_m$ be hypotheses, let $\Hnull$ be the set of true nulls, \rev{let $\Halt=\{1,\ldots,m\}\setminus\Hnull$ be the set of alternatives,} and let $\widehat\Rset\subseteq\{1,\ldots,m\}$ denote a data-dependent rejection set.  \rev{Write $a\vee b=\max\{a,b\}$, and let $\ind\{A\}$ denote the indicator of an event $A$.}  Following \citet{benjamini-hochberg-1995}, the false discovery proportion (FDP) and FDR are
\[
 \FDP=\frac{|\widehat\Rset\cap\Hnull|}{|\widehat\Rset|\vee1},
 \qquad
 \FDR=\E(\FDP).
\]
When $n_1=|\Halt|>0$, we also report the usual average power
\[
 \Power
 =\E\left\{\frac{|\widehat\Rset\cap\Halt|}{n_1}\right\}.
\]
Fix a nominal input level $q\in(0,1)$.  For $p$-values $P_1,\ldots,P_m$ and $0<u\le1/2$, define
\[
 R(u)=\#\{i:P_i\le u\},
 \qquad
 L(u)=\#\{i:P_i\ge1-u\}.
\]
Let
\[
 \mathcal U=\{\min(P_i,1-P_i):1\le i\le m\}\cap(0,1/2]
\]
and set
\begin{equation}
 \widehat u
 =\max\left\{u\in\mathcal U:
       \frac{1+L(u)}{R(u)\vee1}\le q\right\},
 \label{eq:pvalue-threshold}
\end{equation}
with $\widehat u=0$ if the set is empty.  The mirror procedure rejects $P_i\le\widehat u$.

For signed scores $W_1,\ldots,W_m$, a large positive value favors rejection and a large negative value serves as a control.  For $t>0$, write
\[
 N_+(t)=\#\{i:W_i\ge t\},
 \qquad
 N_-(t)=\#\{i:W_i\le-t\}.
\]
The signed-score rule below is the knockoff+ threshold of \citet{barber-candes-2015}; after ordering the hypotheses by $|W_i|$, it is also the Selective SeqStep+ rule in that paper.  \rev{Let $\mathcal T=\{|W_i|:|W_i|>0\}$ be the set of candidate thresholds.}  The threshold is
\begin{equation}
 \widehat t
 =\min\left\{t\in\mathcal T:
       \frac{1+N_-(t)}{N_+(t)\vee1}\le q\right\}.
 \label{eq:threshold}
\end{equation}
If the set is empty, let $\widehat t=\infty$.  The selected set is
\[
 \widehat\Rset=\{i:W_i\ge\widehat t\}.
\]
We use $\le q$ in both definitions; replacing it by $<q$ changes only boundary cases.

\begin{proposition}[Equivalence of the two forms]
\label{prop:equivalence}
Let $W_i=1/2-P_i$.  Apart from deterministic tie conventions, the mirror threshold \eqref{eq:pvalue-threshold} and the knockoff+ threshold \eqref{eq:threshold} have the same rejection set.
\end{proposition}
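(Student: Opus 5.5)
The plan is to build a dictionary between the two sets of objects under $W_i=1/2-P_i$. We then check that the maximum in \eqref{eq:pvalue-threshold} and the minimum in \eqref{eq:threshold} pick out the same point under the map $u\mapsto t=1/2-u$.

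\textbf{Step 1: translate the counts.} For $0<u\le1/2$, set $t=1/2-u$. Then $P_i\le u$ is the same as $W_i\ge t$, so $R(u)=N_+(t)$. Likewise $P_i\ge1-u$ is the same as $W_i\le -t$, so $L(u)=N_-(t)$. For $t>0$ these counts are defined exactly as in the text.

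\textbf{Step 2: translate the candidate sets.} We have $\min(P_i,1-P_i)=1/2-|W_i|$. Hence $u\in\mathcal U$ if and only if $t=1/2-u$ is one of the values $|W_i|$ with $0\le |W_i|<1/2$.

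This correspondence is only approximate, and the mismatches are exactly where the ``tie conventions'' caveat enters. First, $P_i=1/2$ gives $|W_i|=0$. That value is excluded from $\mathcal T$ but included in $\mathcal U$ as $u=1/2$, and at $u=1/2$ the map gives $t=0$, outside the signed-score domain. Second, $P_i\in\{0,1\}$ gives $|W_i|=1/2$, which lies in $\mathcal T$ but corresponds to $u=0\notin\mathcal U$.

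We would set these boundary values aside as deterministic conventions. After doing so, the map $u\mapsto 1/2-u$ is an order-reversing bijection between $\mathcal U$ and $\mathcal T$.

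\textbf{Step 3: translate the threshold.} By Step 1, the feasibility condition $(1+L(u))/(R(u)\vee1)\le q$ is literally the condition $(1+N_-(t))/(N_+(t)\vee1)\le q$. The map reverses order, so the largest feasible $u$ corresponds to the smallest feasible $t$, giving $\widehat t=1/2-\widehat u$. The empty-set conventions also agree: $\widehat u=0$ matches $\widehat t=\infty$, and both reject nothing.

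\textbf{Step 4: translate the rejection sets.} We have $\{i:P_i\le\widehat u\}=\{i:W_i\ge 1/2-\widehat u\}=\{i:W_i\ge\widehat t\}$, so the two procedures reject the same hypotheses.

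The only real obstacle is Step 2's boundary bookkeeping at $|W_i|\in\{0,1/2\}$. There I would check directly that these values either give the same rejection set anyway or fall under the stated tie convention.
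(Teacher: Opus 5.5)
Your proposal is correct and follows essentially the same route as the paper. Both arguments use the dictionary $P_i\le u\Leftrightarrow W_i\ge 1/2-u$ and $P_i\ge 1-u\Leftrightarrow W_i\le-(1/2-u)$, together with the observation that a candidate $u=\min(P_i,1-P_i)$ corresponds to the candidate $t=|W_i|$, so the largest admissible $u$ is the smallest admissible $t$. Your explicit treatment of the boundary values $P_i\in\{0,1/2,1\}$ is extra care that the paper leaves under ``tie conventions.'' One small slip sits inside that bookkeeping: under the empty-set convention $\widehat u=0$, the mirror rule still rejects every $P_i=0$, so ``both reject nothing'' holds only once those boundary values are set aside, as you planned.
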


\begin{proof}
For $t=1/2-u$,
\[
 P_i\le u\quad\Longleftrightarrow\quad W_i\ge t,
 \qquad
 P_i\ge1-u\quad\Longleftrightarrow\quad W_i\le-t.
\]
\rev{Moreover, if $u=\min(P_i,1-P_i)$, then
$t=1/2-u=|1/2-P_i|=|W_i|$.}  Thus the largest admissible $u$ is the smallest admissible $t$.
\end{proof}

More generally, if a null score has a continuous distribution function $F_0$ symmetric about zero, then $P_i=1-F_0(W_i)$ gives the same correspondence after a monotone change of threshold.  This statement is directional: a two-sided $p$-value discards the sign and is not automatically equivalent to a signed-score rule.

Under the global null, every rejection is false, and therefore
\begin{equation}
 \FDR=\Prb(\widehat\Rset\ne\varnothing).
 \label{eq:global-null}
\end{equation}
\begin{revision}
This identity simplifies the all-null calculations, but the main conclusions do not rely on it.  Sections~\ref{sec:nongaussian}, \ref{sec:fixedrho}, and \ref{sec:nearone} give mixed configurations with genuine alternatives, quantitative power statements, or finite-sample bounds allowing arbitrary nonnull scores.
\end{revision}

\section{Why valid knockoff statistics work}
\label{sec:valid}

The key symmetry in knockoff theory is coordinatewise and conditional.  It is much stronger than saying that each null score is marginally symmetric.

\begin{assumption}[Conditional null sign flips]
\label{ass:signflip}
Conditional on
\[
 \mathcal G
 =\sigma\bigl(|W_1|,\ldots,|W_m|,(W_i)_{i\notin\Hnull}\bigr),
\]
the signs $\{\operatorname{sign}(W_i):i\in\Hnull,\ |W_i|>0\}$ of the nonzero null statistics are mutually independent, each taking the values $+1$ and $-1$ with probability $1/2$.  A null coordinate with $W_i=0$ is never selected by \eqref{eq:threshold} and may be ignored.
\end{assumption}

The magnitudes may be arbitrarily dependent, and they may depend on all the nonnull scores.  Only the signs of the nonzero null statistics must remain conditionally independent and fair.

\begin{proposition}[Known knockoff+ guarantee]
\label{prop:known}
Under Assumption~\ref{ass:signflip}, the rule \eqref{eq:threshold} satisfies $\FDR\le q$.
\end{proposition}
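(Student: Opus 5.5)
We follow the standard knockoff+ argument of \citet{barber-candes-2015}: a supermartingale combined with optional stopping, carried out conditionally on $\mathcal G$. First we reduce the FDP to a ratio involving only null counts. Write $V^+(t)=\#\{i\in\Hnull:W_i\ge t\}$ and $V^-(t)=\#\{i\in\Hnull:W_i\le -t\}$. On the event $\widehat t<\infty$, the definition \eqref{eq:threshold} gives $N_+(\widehat t)\vee1\ge (1+N_-(\widehat t))/q$. Hence
\[
 \FDP=\frac{V^+(\widehat t)}{N_+(\widehat t)\vee1}
 \le q\,\frac{V^+(\widehat t)}{1+N_-(\widehat t)}
 \le q\,\frac{V^+(\widehat t)}{1+V^-(\widehat t)}.
\]
When $\widehat t=\infty$ nothing is rejected and $\FDP=0$. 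It therefore suffices to show
\[
 \E\Bigl[\frac{V^+(\widehat t)}{1+V^-(\widehat t)}\Bigm|\mathcal G\Bigr]\le1 .
\]

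Next we set up the filtration. Conditionally on $\mathcal G$, the magnitudes of the null coordinates with $|W_i|>0$ are fixed. Order their distinct values and index time by the threshold $t$ running downward through $\mathcal T$; equivalently, reveal null signs starting from the largest magnitude. Let $\mathcal F_t$ be generated by $\mathcal G$ together with the signs of all null $W_i$ with $|W_i|\ge t$. Reversing time so that $t$ decreases, we claim that $M(t)=V^+(t)/(1+V^-(t))$ is a supermartingale. The cleaner version uses a backward filtration. Let $\mathcal F'_t$ be generated by $\mathcal G$, the counts $V^+(t)$ and $V^-(t)$, and the signs of null coordinates with $|W_i|<t$. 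Then $M$ is a backward supermartingale as $t$ increases. This works because, given $\mathcal F'_t$, the signs of the nulls with $|W_i|\ge t$ are exchangeable by Assumption~\ref{ass:signflip}. Removing one of the $V^++V^-$ coordinates at the smallest magnitude $\ge t$ removes a positive one with probability $V^+/(V^++V^-)$. The standard computation then gives $\E[M(t')\mid\mathcal F'_t]\le M(t)$ for the next threshold $t'>t$. Ties in magnitude are handled by removing tied coordinates one at a time in a fixed order.

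The stopping time is $\widehat t$. It is the smallest admissible $t$, and whether $\widehat t\le t$ holds is determined by $N_\pm(s)$ for $s\ge t$. The nonnull contributions to these counts are $\mathcal G$-measurable. The null contributions for $s\ge t$ are determined by $V^\pm(t)$ together with the signs of nulls below $t$: the magnitudes are known, so $V^+(s)$ equals $V^+(t)$ minus the number of positive nulls with magnitude in $[t,s)$. This needs care. The signs of nulls with magnitude in $[t,s)$ are not themselves in $\mathcal F'_t$, but they are recovered in the backward filtration because $\mathcal F'_t\supseteq\mathcal F'_s$ in the reversed-time sense: the backward filtration accumulates information as $t$ decreases. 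We will fix this bookkeeping following Barber and Cand\`es by defining the process on the reversed index $k$ = number of null coordinates removed. Under that indexing, $\widehat t$ becomes a stopping time, since it is the first threshold scanning upward from the smallest magnitude at which the ratio drops below $q$.

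Optional stopping then gives $\E[M(\widehat t)\mid\mathcal G]\le \E[M(t_{\min})\mid\mathcal G]$, where $t_{\min}$ is the smallest magnitude. At $t_{\min}$, conditionally on $\mathcal G$, $V^+\sim\mathrm{Bin}(n,1/2)$ with $n=V^++V^-$. A direct computation gives $\E[V^+/(1+n-V^+)]=1-2^{-n}\le1$. Taking expectations over $\mathcal G$ then yields $\FDR\le q$. We expect the main obstacle to be this stopping-time and filtration bookkeeping, especially with ties and with nonnull coordinates interleaved among the null magnitudes. Our remedy is to index time by the sorted null magnitudes only, treating all nonnull and magnitude information as part of $\mathcal G$.
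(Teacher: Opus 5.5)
Your proposal is the standard Barber--Cand\`es argument (supermartingale plus optional stopping, conditional on $\mathcal G$), which is exactly what the paper invokes when it omits the proof. Your reduction $\FDP\le q\,V^+(\widehat t)/\{1+V^-(\widehat t)\}$, the one-step supermartingale computation, and the identity $\E[V^+/(1+n-V^+)]=1-2^{-n}$ are all correct.

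The only thing to fix is the stopping-time paragraph, where the directions are reversed. For $s>t$ you have $\mathcal F'_t\subseteq\mathcal F'_s$, because $\mathcal F'_s$ contains the null signs with magnitudes in $[t,s)$ and hence $V^{\pm}(t)$. Also, $\{\widehat t\le t\}$ is decided by the counts $N_\pm(s)$ at thresholds $s\le t$ (not $s\ge t$), and $\mathcal F'_t$ determines all of these. So $\widehat t$ is already a stopping time for your own filtration, and no re-indexing is needed.
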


This is the standard knockoff+ argument; see \citet{barber-candes-2015} for fixed-X knockoffs and \citet{candes-et-al-2018} for model-X knockoffs.  We therefore omit the proof.  It may help to picture what the proof does.  Order the null scores from largest magnitude to smallest and reveal their signs one at a time.  Under Assumption~\ref{ass:signflip}, every revealed null sign is a fresh fair coin.  \rev{The plus one makes the relevant ratio a reverse-time supermartingale: its conditional mean cannot increase as the ordered signs are revealed backward.  Optional stopping is therefore valid.}

\begin{revision}
A related leave-one-out argument yields the bound below without requiring mutual independence of the null signs.  In the lemma, $B_j$ represents a ``good'' event on which the positive sign of null $j$ has conditional odds at most $\Gamma$ times the negative sign.

\begin{lemma}[Leave-one-out sign-odds bound]
\label{lem:leave-one-out-odds}
For every null $j$, let $W_{-j}$ denote the vector of all scores except $W_j$, and let $B_j\in\sigma(|W_j|,W_{-j})$.  Suppose that, for some $\Gamma\ge1$,
\begin{equation}
 \Prb(W_j>0,B_j\mid |W_j|,W_{-j})
 \le
 \Gamma\Prb(W_j<0\mid |W_j|,W_{-j})
 \quad\text{almost surely}.
 \label{eq:leave-one-out-odds}
\end{equation}
Then the contribution to the FDR from selected nulls satisfying $B_j$ is at most $\Gamma q$:
\[
 \E\left\{
  \frac{\sum_{j\in\Hnull}\ind\{W_j\ge\widehat t,B_j\}}
       {N_+(\widehat t)\vee1}
 \right\}
 \le \Gamma q.
\]
\end{lemma}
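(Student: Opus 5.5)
The plan is the leave-one-out (``swap one sign'') argument from robust knockoff theory \citep{barber-candes-samworth-2020}, applied to one null at a time. Fix a null $j$. The key device is a threshold that is invariant to the sign of $W_j$. Define $\widehat t_j$ as the knockoff+ threshold \eqref{eq:threshold} computed from the modified vector in which $W_j$ is replaced by $|W_j|$, so that coordinate $j$ is forced onto the positive side. Then $\widehat t_j$ is a function of $(|W_j|,W_{-j})$ only. It also agrees with $\widehat t$ on the event $\{W_j>0\}$, and on that event $N_+(\widehat t)$ equals the modified count $N_+^{(j)}(\widehat t_j)$.

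Using this, the $j$th summand becomes
\[
\frac{\ind\{W_j\ge\widehat t,B_j\}}{N_+(\widehat t)\vee1}
=\frac{\ind\{W_j>0,B_j\}\,\ind\{|W_j|\ge\widehat t_j\}}{1+N_-^{(j)}(\widehat t_j)}\cdot\frac{1+N_-^{(j)}(\widehat t_j)}{N_+^{(j)}(\widehat t_j)\vee1}.
\]
The second factor is at most $q$ whenever $\widehat t_j<\infty$, by the definition of the threshold. In the first factor, everything except $\ind\{W_j>0,B_j\}$ is $\sigma(|W_j|,W_{-j})$-measurable, and $B_j$ lies in that $\sigma$-field by hypothesis. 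Conditioning on $(|W_j|,W_{-j})$ and applying \eqref{eq:leave-one-out-odds}, I will replace $\Prb(W_j>0,B_j\mid\cdot)$ by $\Gamma\,\Prb(W_j<0\mid\cdot)$. Undoing the conditioning then gives
\[
\E\{\text{summand}_j\}\le\Gamma q\,\E\left\{\frac{\ind\{W_j<0,\ |W_j|\ge\widehat t_j\}}{1+N_-^{(j)}(\widehat t_j)}\right\}.
\]

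The next step is to remove the dependence on $j$ in the threshold. On $\{W_j<0\}$, the vector $W$ differs from the modified vector only in that $j$ has moved from the positive side to the negative side. The crucial monotonicity is that flipping a coordinate from positive to negative can only raise $(1+N_-)/(N_+\vee1)$ at every $t$, so the rejection region shrinks. Hence the threshold computed from $W$ satisfies $\widehat t\ge\widehat t_j$, and $\widehat t_j$ is still a legitimate stopping point. I will instead use the $j$-free quantity $T_-=\widehat t^{-}$, defined by applying the same convention with $j$ on the negative side. To make this clean, I expect to follow Barber--Cand\`es--Samworth exactly: define $\widehat t_j$ as the threshold computed with $W_j$ replaced by $|W_j|$. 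On $\{W_j<0\}$ the count $N_-^{(j)}(\widehat t_j)$ excludes $j$. Therefore
\[
1+N_-^{(j)}(\widehat t_j)=N_-(\widehat t_j)\quad\text{on } \{W_j<0,\ |W_j|\ge\widehat t_j\},
\]
where the right-hand side counts with the actual $W$.

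Summing over $j\in\Hnull$ then gives a bound of
\[
\Gamma q\,\E\left\{\sum_j\frac{\ind\{W_j\le-\widehat t_j\}}{N_-(\widehat t_j)}\right\}.
\]
The main obstacle is showing that this sum is at most $1$, since the thresholds $\widehat t_j$ vary with $j$. I will handle it with the lemma of \citet{barber-candes-samworth-2020}: for any $j,k$ with $W_j,W_k\le-\min(\widehat t_j,\widehat t_k)$, one has $\widehat t_j=\widehat t_k$. The reason is that putting either coordinate on the positive side yields the same counts at all $t$ below both magnitudes, and the threshold is determined by those counts. Consequently, all nonzero terms share a common threshold $\tau$, and the sum is at most $\#\{k:W_k\le-\tau\}/N_-(\tau)=1$. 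Verifying this equality-of-thresholds claim carefully, including the handling of ties and of the candidate set $\mathcal T$, is the step I expect to require the most care.
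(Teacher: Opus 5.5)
Your proposal is correct and is essentially the paper's route. The paper invokes the Barber--Cand\`es--Samworth leave-one-out argument directly, setting $E_j=\log\Gamma$ on $B_j$ and $+\infty$ off it. The two facts you reconstruct are exactly those the paper writes out in its appendix proof of the integrated bound: the sign-invariant threshold $\widehat t_j$ computed with $W_j$ replaced by $|W_j|$, and the equality of these thresholds across all negative nulls with $W_j\le-\widehat t_j$. Your middle detour through $\widehat t\ge\widehat t_j$ and the undefined $T_-$ is never used and can simply be deleted.
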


\begin{proof}
This is the leave-one-out argument in the proof of Theorem~2 of \citet{barber-candes-samworth-2020}, specialized to the knockoff+ threshold.  Although that theorem is stated for model-X knockoffs, the part of their proof following condition~(16) in that paper uses only the signed score vector, the conditional inequality in that condition, and the knockoff+ threshold.  Define $E_j=\log\Gamma$ on $B_j$ and $E_j=+\infty$ on $B_j^c$.  For $\varepsilon<\log\Gamma$, the event $\{E_j\le\varepsilon\}$ is empty.  For $\varepsilon\ge\log\Gamma$, condition \eqref{eq:leave-one-out-odds} gives the required conditional inequality because $\Gamma\le e^\varepsilon$.  The same leave-one-out proof therefore bounds the stated FDR contribution by $\Gamma q$.
\end{proof}

The lemma immediately covers null signs that are conditionally biased toward the control side.

\begin{proposition}[Conservative conditional null signs]
\label{prop:conservative-signs}
Suppose that, conditional on
\[
 \mathcal G
 =\sigma\bigl(|W_1|,\ldots,|W_m|,(W_i)_{i\notin\Hnull}\bigr),
\]
the signs of the nonzero null statistics are mutually independent and satisfy
\[
 \Prb(W_i>0\mid\mathcal G)\le\frac12,
 \qquad i\in\Hnull.
\]
Then the knockoff+ threshold \eqref{eq:threshold} satisfies $\FDR\le q$.
\end{proposition}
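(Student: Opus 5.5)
The plan is to apply Lemma~\ref{lem:leave-one-out-odds} with $\Gamma=1$ and $B_j$ equal to the whole sample space, for every null $j$. With these choices the lemma's conclusion is exactly $\FDR\le q$, since
\[
 \sum_{j\in\Hnull}\ind\{W_j\ge\widehat t\}=|\widehat\Rset\cap\Hnull|
 \quad\text{and}\quad
 N_+(\widehat t)\vee1=|\widehat\Rset|\vee1 .
\]
It therefore suffices to verify the almost-sure inequality
\begin{equation*}
 \Prb(W_j>0\mid |W_j|,W_{-j})\le \Prb(W_j<0\mid |W_j|,W_{-j})
\end{equation*}
for every null $j$.

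The work lies in passing from conditioning on $\mathcal G$ to conditioning on $\sigma(|W_j|,W_{-j})$. Fix a null $j$. Set $\mathcal G_j=\sigma(\mathcal G,(\operatorname{sign}W_i)_{i\in\Hnull,i\ne j})$; this equals $\sigma(|W_j|,W_{-j})$, because nonnull scores are in $\mathcal G$ and each null $W_i$ is recovered from $|W_i|$ and its sign. Conditional on $\mathcal G$, the null signs are mutually independent. Hence the conditional law of $\operatorname{sign}W_j$ given $\mathcal G_j$ equals its law given $\mathcal G$. Concretely, for $A\in\mathcal G$ and any sign pattern $s$ of the other nonzero nulls,
\[
 \E[\ind_A\ind\{\text{signs}=s\}\ind\{W_j>0\}]
 =\E[\ind_A\,\Prb(\text{signs}=s\mid\mathcal G)\,\Prb(W_j>0\mid\mathcal G)],
\]
and the analogous identity holds with $\Prb(W_j>0\mid\mathcal G)$ replaced by $\ind\{W_j>0\}$ inside the expectation. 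This gives
\[
 \Prb(W_j>0\mid\mathcal G_j)=\Prb(W_j>0\mid\mathcal G)=:\pi_j .
\]
The sets of nonzero coordinates are $\mathcal G$-measurable, so the random index set causes no trouble.

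Now work on $\{|W_j|>0\}$. There
\[
 \Prb(W_j<0\mid\mathcal G_j)=1-\pi_j\ge 1/2\ge\pi_j ,
\]
so the inequality holds. On $\{W_j=0\}$, which belongs to $\mathcal G_j$, both sides equal zero. The hypothesis of Lemma~\ref{lem:leave-one-out-odds} is therefore satisfied with $\Gamma=1$, and the result follows.

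The main obstacle I expect is the conditional-independence step: showing rigorously that conditioning additionally on the other null signs does not change the conditional probability of $W_j>0$. I would handle it through the product-form identity above, checked on a generating $\pi$-system of $\mathcal G_j$. As a fallback, one could instead adapt the martingale proof behind Proposition~\ref{prop:known}. Replacing fair coins by coins with $\Prb(+)\le1/2$ should keep the reverse-time process a supermartingale, but the lemma route is shorter.
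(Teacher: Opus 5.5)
Your proposal is correct and follows the paper's own proof. Both apply Lemma~\ref{lem:leave-one-out-odds} with $\Gamma=1$ and $B_j$ equal to the whole sample space, after observing that $\sigma(|W_j|,W_{-j})$ is $\mathcal G$ enlarged by the other null signs, so conditional independence gives $\Prb(W_j>0\mid |W_j|,W_{-j})=\Prb(W_j>0\mid\mathcal G)\le 1/2$. Your $\pi$-system argument and your treatment of the event $\{W_j=0\}$ simply spell out steps the paper states in one line.
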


\begin{proof}
Fix a null $j$.  Conditioning on $(|W_j|,W_{-j})$ reveals $\mathcal G$ and the signs of the other null scores.  Conditional independence given $\mathcal G$ therefore gives
\[
 \Prb(W_j>0\mid |W_j|,W_{-j})
 =\Prb(W_j>0\mid\mathcal G)
 \le\frac12.
\]
Hence \eqref{eq:leave-one-out-odds} holds with $B_j$ equal to the whole sample space and $\Gamma=1$.  Lemma~\ref{lem:leave-one-out-odds} gives $\FDR\le q$.
\end{proof}

The distinction is visible with only two Gaussian scores.  Suppose they are standard normal with correlation $\rho>0$.  For $u,v>0$,
\[
 \Prb\{\operatorname{sign}(W_1)=\operatorname{sign}(W_2)
       \mid |W_1|=u,|W_2|=v\}
 =\frac{1}{1+\exp\{-2\rho uv/(1-\rho^2)\}}>\frac12.
\]
Once the magnitudes are known, equal signs are more likely than opposite signs.  Global sign symmetry therefore does not give coordinatewise sign flips.
\end{revision}

Valid knockoff statistics obtain Assumption~\ref{ass:signflip} from two ingredients.  Swapping a null variable with its knockoff leaves the relevant joint law unchanged, and an antisymmetric feature statistic changes sign under that swap \citep{barber-candes-2015,candes-et-al-2018}.  An arbitrary symmetric or Gaussian score vector inserted into \eqref{eq:threshold} has no such guarantee.

\section{A simple failure mechanism}
\label{sec:trigger}

The following observation drives \rev{several of the counterexamples}.  If a sufficiently large block of null scores moves to the positive side together, the threshold sees many discoveries and few controls.

\begin{lemma}[A positive block forces a rejection]
\label{lem:trigger}
Suppose all $m$ hypotheses are null.  Let $B\subseteq\{1,\ldots,m\}$ contain $n$ indices, and write $s=m-n$.  If
\begin{equation}
 \frac{1+s}{n}<q,
 \label{eq:blockcondition}
\end{equation}
then
\[
 \FDR\ge \Prb(W_i>0\text{ for every }i\in B).
\]
\end{lemma}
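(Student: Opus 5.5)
By the global-null identity \eqref{eq:global-null}, every rejection is false, so $\FDR=\Prb(\widehat\Rset\ne\varnothing)$. The claim therefore reduces to an event inclusion: I plan to show that on the event $A=\{W_i>0\text{ for every }i\in B\}$ the threshold $\widehat t$ is finite, hence $\widehat\Rset\ne\varnothing$. Then $\FDR\ge\Prb(A)$.

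On $A$, I choose the candidate threshold $t^*=\min_{i\in B}|W_i|$. This lies in $\mathcal T$ because every $W_i$ with $i\in B$ is strictly positive, so $|W_i|>0$. I then count the two tails at $t^*$.

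\emph{Discovery side.} Every $i\in B$ has $W_i=|W_i|\ge t^*$, so $N_+(t^*)\ge n$.

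\emph{Control side.} No index of $B$ can satisfy $W_i\le -t^*<0$, so only the $s=m-n$ indices outside $B$ can contribute, and $N_-(t^*)\le s$.

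Combining the two counts,
\[
 \frac{1+N_-(t^*)}{N_+(t^*)\vee1}\le\frac{1+s}{n}<q
\]
by \eqref{eq:blockcondition}. So $t^*$ belongs to the admissible set in \eqref{eq:threshold}, the minimum is at most $t^*<\infty$, and $\widehat\Rset=\{i:W_i\ge\widehat t\}\supseteq\{i:W_i\ge t^*\}\supseteq B\ne\varnothing$.

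There is no real obstacle here. The only points needing care are that $n\ge1$, which is implied by \eqref{eq:blockcondition} since otherwise the ratio is undefined, and that $t^*>0$, so that it is a legitimate element of $\mathcal T$. Strict positivity of the block scores gives the latter and also rules out the tie issues that zero scores would cause.
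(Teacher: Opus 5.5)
Your proof is correct and matches the paper's argument: on the all-positive event you evaluate the ratio at $t^*=\min_{i\in B}W_i$, bound $N_+(t^*)\ge n$ and $N_-(t^*)\le s$, and conclude via the global-null identity $\FDR=\Prb(\widehat\Rset\ne\varnothing)$. Your direct observation that $t^*$ is itself an element of $\mathcal T$ is slightly cleaner than the paper's appeal to the counts being constant between consecutive observed magnitudes.
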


\begin{proof}
On the displayed event, let $t_B=\min_{i\in B}W_i>0$.  At least $n$ scores are above $t_B$, while at most the $s$ scores outside $B$ can be below $-t_B$.  Hence
\[
 \frac{1+N_-(t_B)}{N_+(t_B)\vee1}
 \le \frac{1+s}{n}<q.
\]
The two counts are constant between consecutive observed magnitudes, so some threshold in $\mathcal T$ also passes.  The procedure makes a rejection.  Since every hypothesis is null, the false discovery proportion is one.  Equation~\eqref{eq:global-null} proves the claim.
\end{proof}

\rev{When $B$ contains all $m$ scores, condition \eqref{eq:blockcondition} is simply $m>1/q$.  Any all-null model that assigns probability greater than $q$ to the event that every score is positive must therefore violate FDR control.}

\section{A non-Gaussian PRDS example}
\label{sec:nongaussian}

\begin{revision}
PRDS is a standard positive-dependence condition under which the Benjamini--Hochberg procedure remains valid \citep{benjamini-hochberg-1995,benjamini-yekutieli-2001}.  Informally, conditioning on a larger null $p$-value should not make an increasing event less likely.  Formally, a vector $P$ is PRDS if, for every coordinatewise increasing measurable set $D$ and every null index $i$, the map
\end{revision}
\[
 u\longmapsto \Prb(P\in D\mid P_i=u)
\]
is nondecreasing.

\begin{revision}
PRDS protects Benjamini--Hochberg, but it does not automatically protect other adaptive rules; correlated two-sided Gaussian tests provide one such warning \citep{dobriban-2026}.  The mirror threshold fails even in the following simple PRDS model.  A binary latent variable makes all coordinates favor the same half of the unit interval.  Conditional on that variable, the coordinates are independent and continuous.
\end{revision}

\begin{proposition}[Uniform PRDS $p$-values with full support]
\label{prop:nongaussian}
Fix $q\in(0,1/2)$.  There are an integer $m$ and an all-null vector $P=(P_1,\ldots,P_m)$ such that each $P_i$ is uniform on $(0,1)$, the joint density is strictly positive on $(0,1)^m$, and $P$ is PRDS, but the mirror threshold \eqref{eq:pvalue-threshold} has FDR greater than $q$.  The coordinates are almost surely distinct.
\end{proposition}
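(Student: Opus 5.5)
Build the construction from a binary latent state, as the section describes. Let $S\in\{0,1\}$ with $\Prb(S=1)=\Prb(S=0)=1/2$, and fix $\delta\in(0,1)$. Conditional on $S$, take $P_1,\ldots,P_m$ i.i.d.\ with a density on $(0,1)$:
\[
 f_1(u)=(1+\delta)\ind\{u<1/2\}+(1-\delta)\ind\{u\ge1/2\},\qquad f_0(u)=f_1(1-u),
\]
for $S=1$ and $S=0$ respectively. The mixture of $f_1$ and $f_0$ with equal weights is the uniform density, so each $P_i$ is uniform. The joint density is $\tfrac12\prod_i f_1(u_i)+\tfrac12\prod_i f_0(u_i)\ge (1-\delta)^m>0$ on $(0,1)^m$. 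The coordinates are continuous, so they are almost surely distinct.

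\emph{PRDS.} Conditional on $P_i=u$, the other coordinates are a mixture of the two i.i.d.\ product laws, with weight $\pi(u)=\Prb(S=0\mid P_i=u)$. Because $f_0(u)/f_1(u)$ is nondecreasing in $u$, the weight $\pi(u)$ is nondecreasing. The product law under $S=0$ stochastically dominates the one under $S=1$, since $f_0$ puts more mass on large values and $F_0\le F_1$ pointwise. For an increasing set $D$, write $D$ through its $i$-th section. Given $P_i=u$,
\[
 \Prb(P\in D\mid P_i=u)=\pi(u)\,g_0(u)+\{1-\pi(u)\}\,g_1(u),
\]
where $g_s(u)=\Prb(P\in D\mid S=s,P_i=u)$. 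Each $g_s$ is nondecreasing in $u$ because $D$ is increasing and the other coordinates are independent of $P_i$ given $S$. Also $g_0\ge g_1$ by stochastic dominance. So the expression is nondecreasing: it is a convex combination of nondecreasing functions, with weight shifting toward the larger one. I expect this monotonicity bookkeeping to be the only delicate step, and the decomposition above handles it.

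\emph{FDR.} Apply Lemma~\ref{lem:trigger} in its $p$-value form through Proposition~\ref{prop:equivalence}, with $W_i=1/2-P_i$ and $B$ the whole index set, so $s=0$. Take $m>1/q$. Then
\[
 \FDR\ge \Prb(P_i<1/2\ \forall i)=\tfrac12\Bigl(\tfrac{1+\delta}{2}\Bigr)^m+\tfrac12\Bigl(\tfrac{1-\delta}{2}\Bigr)^m .
\]
This bound falls below $q$ for large $m$ unless $\delta$ is near $1$. Choose $m=\lceil 1/q\rceil+1$ and then $\delta$ close enough to $1$ that $\tfrac12\{(1+\delta)/2\}^m>q$. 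This is possible because the limit as $\delta\to1$ is $1/2>q$, which is where the hypothesis $q<1/2$ enters. Equation~\eqref{eq:global-null} then gives $\FDR>q$.

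If strict inequality in the trigger is a concern, use the fact that $\{P_i<1/2\ \forall i\}$ has $L(u)=0$ for the threshold $u=\max_i P_i$. Then $1/m<q$ and a rejection occurs.
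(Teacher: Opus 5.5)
Your proposal is correct and follows the paper's proof essentially step for step. It uses the same fair latent-state mixture of two piecewise-constant densities (your $\delta$ is the paper's $1-2\varepsilon$, with the state labels swapped), the same mixture-weight decomposition $\pi(u)g_0(u)+\{1-\pi(u)\}g_1(u)$ for PRDS, and Lemma~\ref{lem:trigger} on the full index set, where the all-below-$1/2$ probability tends to $1/2>q$. Your closing check that $u=\max_iP_i$ passes with $L(u)=0$ is a harmless direct verification of what the lemma already gives.
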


\begin{proof}
Choose $m>1/q$ and let $H$ be a fair Bernoulli variable.  Given $H=h$, let $P_1,\ldots,P_m$ be independent with density $f_h$.  For $0<\varepsilon<1/2$, set
\[
 f_0(u)=
 \begin{cases}
  2(1-\varepsilon),&0<u<1/2,\\
  2\varepsilon,&1/2<u<1,
 \end{cases}
 \qquad
 f_1(u)=f_0(1-u).
\]
Because $(f_0+f_1)/2=1$, every $P_i$ is uniform.  The joint density
\[
 g(p_1,\ldots,p_m)
 =\frac12\prod_{i=1}^m f_0(p_i)
  +\frac12\prod_{i=1}^m f_1(p_i)
\]
is strictly positive on the open cube.  Absolute continuity gives no ties.

We next check PRDS.  The conditional law under $H=1$ is stochastically larger than the law under $H=0$, and
\[
 w(u):=\Prb(H=1\mid P_i=u)=
 \begin{cases}
  \varepsilon,&u<1/2,\\
  1-\varepsilon,&u>1/2,
 \end{cases}
\]
is nondecreasing.  \rev{For an increasing set $D$, let $P_{-i}$ denote $P$ with coordinate $i$ removed, and put}
\[
 g_h(u)=\Prb\{(u,P_{-i})\in D\mid H=h\},
\]
\rev{Both $g_0$ and $g_1$ are nondecreasing, and stochastic ordering gives $g_1\ge g_0$.  Therefore}
\[
 Q(u):=\Prb(P\in D\mid P_i=u)
 =\{1-w(u)\}g_0(u)+w(u)g_1(u)
\]
is nondecreasing.  Indeed, for $u<v$,
\begin{align*}
 Q(v)-Q(u)
 ={}&\{1-w(v)\}\{g_0(v)-g_0(u)\}
     +w(v)\{g_1(v)-g_1(u)\}\\
 &+\{w(v)-w(u)\}\{g_1(u)-g_0(u)\}\ge0.
\end{align*}
Thus $P$ is PRDS.

Finally,
\[
 \Prb(P_1<1/2,\ldots,P_m<1/2)
 =\frac12\{(1-\varepsilon)^m+\varepsilon^m\}.
\]
On this event all scores $W_i=1/2-P_i$ are positive, so Lemma~\ref{lem:trigger} applies.  The displayed probability tends to $1/2$ as $\varepsilon\downarrow0$ and can therefore be made larger than $q$.
\end{proof}

\rev{The exact calculation is easiest after separating each score into its magnitude and sign.  Take $q=0.1$ and $m=11$, and let}
\[
 M_i=|W_i|,\qquad S_i=\ind\{W_i>0\},
\]
and let $\pi$ be the random permutation that orders the magnitudes, $M_{\pi(1)}>\cdots>M_{\pi(11)}$.  Conditional on $H=h$, the pairs $(M_i,S_i)$ are independent across $i$, the sign $S_i$ is independent of the magnitude $M_i$, and
\[
 S_i\sim\operatorname{Bernoulli}(p_h),
 \qquad p_0=1-\varepsilon,\quad p_1=\varepsilon.
\]
Because $\pi$ is determined only by the magnitudes, the reordered signs $S_{\pi(1)},\ldots,S_{\pi(11)}$ remain independent Bernoulli$(p_h)$ variables conditional on $H=h$.

At a threshold containing $k$ ordered scores, the smallest possible knockoff+ ratio is $1/k$, attained when all $k$ signs are positive.  Thus no threshold containing at most nine scores can pass.  A threshold containing ten scores passes exactly when the ten largest-magnitude scores are all positive, in which case the ratio is $1/10=0.1$.  A threshold containing all eleven scores can pass only when all eleven signs are positive, an event already contained in the preceding one.  Hence
\begin{equation}
 \FDR
 =\frac12\{(1-\varepsilon)^{10}+\varepsilon^{10}\}.
 \label{eq:prds-exact}
\end{equation}
For $\varepsilon=0.1$, this is approximately $0.1743$.  With the strict convention $<q$ in place of $\le q$, ten positive signs no longer pass at the boundary, so the exponent becomes eleven and the FDR is approximately $0.1569$.

\begin{revision}
This calculation also shows where the dependence enters.  Given $H$, the signs are independent.  After $H$ is averaged out, every sign remains marginally fair, but for $i\ne j$,
\end{revision}
\[
 \operatorname{Cov}(S_i,S_j)=\left(\frac12-\varepsilon\right)^2>0.
\]
\begin{revision}
The hidden state therefore creates coordinated signs without disturbing the individual null distributions.

The construction has no ties or lower-dimensional support: conditionally the coordinates are continuous, and unconditionally the density is positive on the whole cube.
\end{revision}
\rev{The corresponding scores have symmetric uniform margins, and $W$ and $-W$ have the same distribution.}
\begin{revision}
The jump in the density at $1/2$ can be smoothed without removing the FDR violation, although the exact formula \eqref{eq:prds-exact} uses the piecewise-constant form.

\subsection{Mixed nulls and alternatives}
\label{subsec:mixed-prds}

The counterexample above uses only null hypotheses.  We now ask whether the failure persists when genuine alternatives are present.  It does: we allow a positive fraction of alternatives and track both FDR and power.  Keep the same dependent nulls and add independent alternatives that favor the discovery half.  For $1/2<r<1$, let
\[
 g_r(u)=
 \begin{cases}
  2r,&0<u<1/2,\\
  2(1-r),&1/2<u<1,
 \end{cases}
 \qquad \frac12<r<1.
\]
A variable with density $g_r$ falls below $1/2$ with probability $r$.  It is therefore stochastically smaller than uniform, while retaining positive density on the whole unit interval.

\begin{proposition}[Exact mixed-configuration lower bounds]
\label{prop:mixed-prds}
Fix integers $n_0,n_1\ge1$, put $m=n_0+n_1$, and fix $0<\varepsilon<1/2$ and $1/2<r<1$.  Generate $n_0$ null $p$-values from the latent-state model in Proposition~\ref{prop:nongaussian}, using the same fair Bernoulli variable $H$, and generate $n_1$ independent alternative $p$-values with density $g_r$, independently of $H$ and of the nulls.  Then the null $p$-values are exactly uniform, the joint density is strictly positive on $(0,1)^m$, and the full vector is PRDS with respect to the null coordinates.

Let
\[
 p_0=1-\varepsilon,
 \qquad
 p_1=\varepsilon,
 \qquad
 k_m=\left\lceil\frac{m+1}{1+q}\right\rceil.
\]
Here $k_m$ is the minimum number of discovery-side observations needed for the largest candidate cutoff to pass.  The binomial variables below count how many nulls and alternatives fall on that side:
For $h\in\{0,1\}$, let
\[
 X_{0,h}\sim\operatorname{Bin}(n_0,p_h),
 \qquad
 X_1\sim\operatorname{Bin}(n_1,r),
\]
independently.  The mirror threshold satisfies
\begin{align}
 \FDR
 &\ge
 \frac12\sum_{h=0}^1
 \E\left[
  \frac{X_{0,h}}{(X_{0,h}+X_1)\vee1}
  \ind\{X_{0,h}+X_1\ge k_m\}
 \right],
 \label{eq:mixed-fdr-bound}\\
 \Power
 &\ge
 \frac12\sum_{h=0}^1
 \E\left[
  \frac{X_1}{n_1}
  \ind\{X_{0,h}+X_1\ge k_m\}
 \right].
 \label{eq:mixed-power-bound}
\end{align}
\end{proposition}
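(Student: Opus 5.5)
We split the argument into three parts: the structural properties of the model, an event on which the mirror threshold must reject at the largest candidate cutoff, and the resulting bounds for FDR and power.

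\emph{Structural properties.} Uniformity of each null $p$-value follows exactly as in Proposition~\ref{prop:nongaussian}, because $(f_0+f_1)/2=1$. The joint density is
\[
 \tfrac12\Bigl\{\prod_{i\in\Hnull} f_0(p_i)+\prod_{i\in\Hnull} f_1(p_i)\Bigr\}\prod_{j\in\Halt} g_r(p_j).
\]
It is strictly positive on $(0,1)^m$ since $0<\varepsilon$ and $r<1$. For PRDS we repeat the mixture argument of Proposition~\ref{prop:nongaussian}. Fix a null index $i$ and an increasing set $D$, and define $g_h(u)=\Prb\{(u,P_{-i})\in D\mid H=h\}$. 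Given $H=h$, all coordinates are independent and the alternatives do not depend on $h$. Hence $g_h$ is nondecreasing, and $g_1\ge g_0$ because the conditional law of $P_{-i}$ under $H=1$ is a product that is coordinatewise stochastically larger (the null factors) or equal (the alternative factors). The posterior $w(u)=\Prb(H=1\mid P_i=u)$ is the same step function as before, since the alternatives are independent of $H$. The three-term decomposition of $Q(v)-Q(u)$ then applies verbatim.

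\emph{The forcing event.} Let $X=R(\tfrac12)$ be the number of $p$-values below $1/2$; almost surely there are no ties and no value equals $1/2$. Let $u^*$ be the largest element of $\mathcal U$. Every observation is then counted either in $R(u^*)$ or in $L(u^*)$, so $R(u^*)=X$ and $L(u^*)=m-X$. The ratio at $u^*$ is $(1+m-X)/X$, and this is at most $q$ exactly when $X\ge (m+1)/(1+q)$, that is, when $X\ge k_m$. Because the rule \eqref{eq:pvalue-threshold} takes the maximum admissible $u$, on the event $\{X\ge k_m\}$ we get $\widehat u=u^*$ and every $P_i<1/2$ is rejected. (Equivalently, in signed form the smallest candidate $t$ passes.) On this event, therefore, $|\widehat\Rset\cap\Hnull|$ equals the number $X_0$ of nulls below $1/2$, and $|\widehat\Rset\cap\Halt|$ equals the number $X_1$ of alternatives below $1/2$.

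\emph{FDR and power bounds.} Both FDP and the power integrand are nonnegative, so we may restrict to $\{X_0+X_1\ge k_m\}$. This gives
\[
 \FDR\ge \E\Bigl[\frac{X_0}{(X_0+X_1)\vee1}\,\ind\{X_0+X_1\ge k_m\}\Bigr],
 \qquad
 \Power\ge \E\Bigl[\frac{X_1}{n_1}\,\ind\{X_0+X_1\ge k_m\}\Bigr].
\]
Conditional on $H=h$, the variable $X_0$ is $\operatorname{Bin}(n_0,p_h)$, with $\Prb(P_i<1/2\mid H=0)=1-\varepsilon$ and $\Prb(P_i<1/2\mid H=1)=\varepsilon$. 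The variable $X_1$ is $\operatorname{Bin}(n_1,r)$ and is independent of $X_0$ and of $H$. Averaging over the fair variable $H$ yields \eqref{eq:mixed-fdr-bound} and \eqref{eq:mixed-power-bound}.

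The main point needing care is the identification $R(u^*)+L(u^*)=m$ at the largest candidate cutoff. It requires $u^*=\max_i\min(P_i,1-P_i)$ and the almost-sure absence of ties and of values exactly at $1/2$; both hold by absolute continuity. Once this identification is in place, the remaining steps are straightforward bookkeeping.
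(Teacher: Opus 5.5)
Your proposal is correct and follows the paper's proof essentially step for step. It gets PRDS from the unchanged latent-state mixture argument, since the alternatives do not depend on $H$. At the largest candidate cutoff $u_*=\max_i\min(P_i,1-P_i)$ every coordinate lies in exactly one tail, so that cutoff passes exactly when $X_{0,h}+X_1\ge k_m$; restricting to this event and averaging over $H$ then yields both bounds.
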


\begin{proof}
The proof has two parts.  First, adding the independent alternatives preserves PRDS: their law does not depend on $H$, so the latent-state monotonicity argument in the proof of Proposition~\ref{prop:nongaussian} applies unchanged.  The claims about the margins and the joint density are immediate.

Second, we evaluate one candidate cutoff.  Conditional on $H=h$, $X_{0,h}$ counts null $p$-values below $1/2$, and $X_1$ counts alternatives below $1/2$.  Let
\[
 u_*=\max_{1\le i\le m}\min(P_i,1-P_i).
\]
This is the largest candidate cutoff.  Every coordinate then lies in exactly one of the two counted tails, so
\[
 R(u_*)=X_{0,h}+X_1,
 \qquad
 L(u_*)=m-X_{0,h}-X_1.
\]
The cutoff $u_*$ passes exactly when
\[
 \frac{1+m-X_{0,h}-X_1}{(X_{0,h}+X_1)\vee1}\le q,
\]
which is equivalent to $X_{0,h}+X_1\ge k_m$.  When this happens, $u_*$ is the largest possible candidate and therefore equals $\widehat u$.  The event implies $X_{0,h}+X_1\ge k_m\ge1$, so the procedure rejects all discovery-side coordinates and its FDP is $X_{0,h}/(X_{0,h}+X_1)$, while its realized power is $X_1/n_1$.  Keeping only this event and averaging over the two values of $H$ proves \eqref{eq:mixed-fdr-bound}--\eqref{eq:mixed-power-bound}.
\end{proof}

\begin{corollary}[Full-cutoff population crossing]
\label{cor:mixed-phase}
Suppose $n_0/m\to\pi_0\in(0,1)$ and $n_1/m\to\pi_1=1-\pi_0$.  Define
\[
 A_h=\pi_0p_h+\pi_1r,
 \qquad
 d_h=\frac{\pi_0p_h}{A_h},
 \qquad
 c_q=\frac{1}{1+q}.
\]
Thus $A_h$ is the limiting discovery-side fraction in state $h$, $d_h$ is the limiting null fraction among those discovery-side observations, and $c_q$ is the fraction required for the largest cutoff to pass.  Then
\begin{align}
 \liminf_{m\to\infty}\FDR
 &\ge
 \frac12\sum_{h=0}^1
 d_h\ind\{A_h>c_q\},
 \label{eq:mixed-phase-fdr}\\
 \liminf_{m\to\infty}\Power
 &\ge
 \frac r2\sum_{h=0}^1
 \ind\{A_h>c_q\}.
 \label{eq:mixed-phase-power}
\end{align}
No assertion is made at the boundary $A_h=c_q$.
\end{corollary}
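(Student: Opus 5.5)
The plan is to start from the finite-$m$ lower bounds \eqref{eq:mixed-fdr-bound}--\eqref{eq:mixed-power-bound} of Proposition~\ref{prop:mixed-prds} and pass to the limit with a law of large numbers. We treat each state $h\in\{0,1\}$ separately; only states with $A_h>c_q$ contribute to the right-hand sides, so we fix such an $h$. The proof rests on three convergences. First,
\[
 \frac{X_{0,h}}{m}\to\pi_0p_h,
 \qquad
 \frac{X_1}{m}\to\pi_1r
\]
in probability. This follows from Chebyshev's inequality, because the binomial variances are at most $n_0/4$ and $n_1/4$ and $n_0/m\to\pi_0$, $n_1/m\to\pi_1$. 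Consequently $(X_{0,h}+X_1)/m\to A_h$ in probability. Second, $k_m/m\to c_q$, since $k_m=\lceil (m+1)/(1+q)\rceil$.

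The key step is to show that $\Prb(X_{0,h}+X_1\ge k_m)\to1$ when $A_h>c_q$. Let $\delta=(A_h-c_q)/2>0$. For all large $m$ we have $k_m\le (c_q+\delta)m$. On the event $\{(X_{0,h}+X_1)/m>A_h-\delta\}$, whose probability tends to one, we then get $X_{0,h}+X_1\ge k_m$. The strict inequality $A_h>c_q$ is exactly what makes this margin available, which is why the boundary case is excluded.

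Next we handle the integrands. On the event $\{X_{0,h}+X_1\ge k_m\}$, the ratio
\[
 \frac{X_{0,h}}{(X_{0,h}+X_1)\vee1}
 =\frac{X_{0,h}/m}{(X_{0,h}+X_1)/m}
\]
converges in probability to $\pi_0p_h/A_h=d_h$, by the continuous mapping theorem applied at a point where the denominator $A_h$ is positive. Multiplying by the indicator, which tends to one in probability, the product converges in probability to $d_h$. Every quantity involved is bounded in $[0,1]$, so bounded convergence turns convergence in probability into convergence of expectations. The $h$-th FDR term therefore tends to $d_h$. In the same way, $X_1/n_1\to r$ in probability, so the $h$-th power term tends to $r$.

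For states with $A_h\le c_q$, the terms on the right of \eqref{eq:mixed-fdr-bound}--\eqref{eq:mixed-power-bound} are nonnegative, so dropping them keeps the inequality a valid lower bound. Taking $\liminf$ of the finite-$m$ bounds and summing over $h$ with weight $1/2$ gives \eqref{eq:mixed-phase-fdr} and \eqref{eq:mixed-phase-power}. The only delicate point is the margin argument in the key step, and the strict inequality $A_h>c_q$ settles it.
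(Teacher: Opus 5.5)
Your proposal is correct and follows the paper's proof. Both arguments pass the finite-$m$ bounds of Proposition~\ref{prop:mixed-prds} to the limit in the same way: the law of large numbers for the two binomial counts, the limit $k_m/m\to c_q$, the fact that the full cutoff passes with probability tending to one when $A_h>c_q$, and bounded convergence. You additionally spell out the Chebyshev step and the explicit margin $\delta=(A_h-c_q)/2$, both of which the paper leaves implicit.
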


\begin{proof}
Conditional on $H=h$, the law of large numbers gives
\[
 \frac{X_{0,h}+X_1}{m}\longrightarrow A_h,
 \qquad
 \frac{X_{0,h}}{X_{0,h}+X_1}\longrightarrow d_h,
 \qquad
 \frac{X_1}{n_1}\longrightarrow r
\]
in probability.  Also $k_m/m\to c_q$.  If $A_h>c_q$, the full cutoff passes with probability tending to one and the displayed FDP and power limits apply.  Substitute these limits into Proposition~\ref{prop:mixed-prds} and use bounded convergence.
\end{proof}

\begin{corollary}[A large fraction of alternatives]
\label{cor:many-alternatives}
Fix $q\in(0,1/2)$ and a limiting null fraction $\pi_0\in(q/(1+q),1)$.  For every $\eta>0$, there are full-support PRDS models of the form above, with $n_0/m\to\pi_0$, for which
\[
 \liminf_{m\to\infty}\FDR\ge\frac{\pi_0}{2}-\eta,
 \qquad
 \liminf_{m\to\infty}\Power\ge\frac12-\eta.
\]
If, in addition, $\pi_0>2q$, then choosing $\eta<\pi_0/2-q$ shows that the mirror threshold fails to control the FDR even though the alternative fraction tends to $1-\pi_0$ and the power stays bounded away from zero.
\end{corollary}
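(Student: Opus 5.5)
The plan is to apply Corollary~\ref{cor:mixed-phase} with parameters chosen so that only state $h=0$ crosses, and so that in that state the null fraction among discovery-side observations is close to $\pi_0$. Fix $\pi_0\in(q/(1+q),1)$ and $\eta>0$, and take $n_0=\lfloor\pi_0 m\rfloor$, $n_1=m-n_0$. We use the model of Proposition~\ref{prop:mixed-prds} with $\varepsilon$ small and $r$ close to one. That proposition already supplies exact uniform null margins, full support and PRDS, so only the two limits need checking.

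In state $h=0$ we have $p_0=1-\varepsilon$, so
\[
A_0=\pi_0(1-\varepsilon)+\pi_1 r\to \pi_0+\pi_1=1
\]
as $\varepsilon\downarrow0$ and $r\uparrow1$. Since $c_q=1/(1+q)<1$, we can choose $\varepsilon$ and $r$ with $A_0>c_q$. Also
\[
d_0=\frac{\pi_0(1-\varepsilon)}{A_0}\to\pi_0,
\]
so for $\varepsilon$ small and $r$ near one we get $d_0\ge\pi_0-2\eta$ and $r\ge1-2\eta$. Corollary~\ref{cor:mixed-phase} then gives
\[
\liminf\FDR\ge \tfrac12 d_0\ge\pi_0/2-\eta
\qquad\text{and}\qquad
\liminf\Power\ge r/2\ge 1/2-\eta.
\]
Both bounds discard the $h=1$ term, which is nonnegative. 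Nothing about state $1$ is needed, so there is no concern about the boundary $A_1=c_q$.

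The final claim is immediate. If $\pi_0>2q$ and $\eta<\pi_0/2-q$, then $\liminf\FDR>q$, so $\FDR>q$ for all large $m$. Meanwhile the alternative fraction $n_1/m\to1-\pi_0>0$ and the power stays at least $1/2-\eta>0$.

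The only delicate point is choosing $\varepsilon$ and $r$ jointly so that $A_0>c_q$ holds strictly; continuity makes this routine. The hypothesis $\pi_0>q/(1+q)$ is not actually needed for this choice, since $A_0\to1$ regardless. I would note that it merely keeps the parameter regime of the statement nondegenerate, for example so that $\pi_0/2-\eta$ can exceed the trivial bound.
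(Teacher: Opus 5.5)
Your proof is correct and follows the paper's argument: let $\varepsilon\downarrow0$ and $r\uparrow1$ so that $A_0\to1>c_q$ and $d_0\to\pi_0$, then read the two bounds off Corollary~\ref{cor:mixed-phase}. The paper uses $\pi_0>q/(1+q)$, which is equivalent to $A_1\to1-\pi_0<c_q$, only to ensure that state $H=1$ does not cross, so that the bounds tend exactly to $\pi_0/2$ and $1/2$; you are right that dropping the nonnegative $h=1$ term makes this hypothesis unnecessary for the stated inequalities, but your guessed reason for it (``exceeding the trivial bound'') is not the paper's purpose.
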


\begin{proof}
Let $\varepsilon\downarrow0$ and $r\uparrow1$.  Then
\[
 A_0\longrightarrow1,
 \qquad
 A_1\longrightarrow1-\pi_0,
 \qquad
 d_0\longrightarrow\pi_0.
\]
The condition $\pi_0>q/(1+q)$ is equivalent to $1-\pi_0<c_q$.  Hence, for $\varepsilon$ sufficiently small and $r$ sufficiently close to one, only the state $H=0$ satisfies $A_h>c_q$ in Corollary~\ref{cor:mixed-phase}.  The two lower bounds can then be made arbitrarily close to $\pi_0/2$ and $1/2$, respectively.  The stronger condition $\pi_0>2q$ is needed only when one wants the FDR lower bound to exceed $q$.
\end{proof}

For example, at $q=0.1$, $\varepsilon=0.05$, and $r=0.99$, the exact binomial expressions in Proposition~\ref{prop:mixed-prds} give
\[
 \begin{array}{c|c|c|c|c}
 m&n_0&n_1&\text{FDR lower bound}&\text{power lower bound}\\ \hline
 100&90&10&0.429882&0.475022\\
 200&180&20&0.445731&0.492397
 \end{array}
\]
The favorable latent state crosses the required count more reliably as $m$ grows.  Simulating the full adaptive rule for $200{,}000$ repetitions gives FDR $0.4440$ and power $0.4864$ at $m=100$, and FDR $0.4462$ and power $0.4923$ at $m=200$.  Thus the simple full-cutoff event accounts for most of the observed FDR.

\subsection{An exact first-passage formula at reciprocal levels}
\label{subsec:first-passage}

The preceding argument is a dense one: a positive fraction of all hypotheses makes the largest cutoff pass.  The threshold can also reject for a much more local reason.  Order the hypotheses from largest magnitude to smallest.  At a reciprocal level $q=1/L$, record $+1$ for each discovery-side sign and $-L$ for each control-side sign.  A prefix passes when this running total is at least $L$; because every upward step has size one, the first passing prefix hits $L$ exactly.  Rejection is therefore a first-passage event for a simple random walk, even when no population tail ratio crosses.

\begin{proposition}[Random-design first-passage formula]
\label{prop:first-passage}
Let $q=1/L$ for an integer $L\ge2$.  Before generating the $p$-values, independently declare each coordinate null with probability $\pi_0\in(0,1)$ and alternative with probability $\pi_1=1-\pi_0$.  Conditional on the resulting label vector $J$, generate the null and alternative $p$-values as in Proposition~\ref{prop:mixed-prds}.  Write $\FDR_J$ for the ordinary FDR conditional on the fixed label vector, and let $\E_J$ denote expectation over the random labels.  Put
\[
 A_h=\pi_0p_h+\pi_1r,
 \qquad
 d_h=\frac{\pi_0p_h}{A_h}.
\]
Under random labels, $A_h$ is the discovery-side probability in state $h$, and $d_h$ is the conditional null probability among discovery-side signs.  Define
\begin{equation}
 G_{m,L}(a)
 =\sum_{\substack{j\ge1\\(L+1)j-1\le m}}
 \frac{L}{(L+1)j-1}
 \binom{(L+1)j-1}{j-1}
 a^{Lj}(1-a)^{j-1}.
 \label{eq:first-passage-prob}
\end{equation}
The quantity $G_{m,L}(a)$ is the probability that a walk with step $+1$ of probability $a$ and step $-L$ otherwise reaches $L$ within its first $m$ steps.  The cycle-lemma sum makes this probability explicit.  Then
\begin{equation}
 \E_J(\FDR_J)
 =\frac12\sum_{h=0}^1d_hG_{m,L}(A_h).
 \label{eq:random-design-fdr}
\end{equation}
Consequently, at least one deterministic null--alternative configuration has FDR no smaller than the right-hand side of \eqref{eq:random-design-fdr}.
\end{proposition}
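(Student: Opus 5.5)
Conditional on $H=h$, under random labels the $m$ coordinates are i.i.d.: each is null with probability $\pi_0$, its sign is positive with probability $p_h$ if null and $r$ if alternative, and every magnitude $|1/2-P_i|$ is uniform on $(0,1/2)$ and independent of label and sign. The densities $f_h$ and $g_r$ are constant on each half of $(0,1)$, which is what gives this independence. I will work in the signed-score form via Proposition~\ref{prop:equivalence} and order the coordinates by decreasing magnitude. Because the ordering permutation depends only on the magnitudes, the reordered pairs (label, sign) are still i.i.d. given $H=h$. Each sign is positive with probability $A_h$, and given a positive sign the coordinate is null with probability $d_h$.

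Next I will identify the procedure's output with a first-passage event. At level $q=1/L$, a prefix containing $k_+$ positive and $k_-$ negative signs passes exactly when $1+k_-\le k_+/L$, that is, when $S:=k_+-Lk_-\ge L$ (note $k_+\ge L\ge 2$ forces $k_+\vee1=k_+$). The knockoff+ threshold takes the smallest passing $t$, which is the longest passing prefix, not the first one. This is the main obstacle: I must show that the FDP at the selected longest passing prefix has expectation equal to $d_h$ times the probability that some prefix passes.

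For this I plan a conditioning argument. Let $\tau$ be the largest passing prefix length. I will condition on $\tau$, on the signs, and on the labels of all coordinates outside the positive ones inside the prefix. Labels are independent of signs and of each other, each positive coordinate being null with probability $d_h$. The event $\{\tau=k\}$ and the rejection set depend only on the signs, so given the signs, the labels of the rejected (positive) coordinates are i.i.d. Bernoulli$(d_h)$ nulls. Hence $\E[\FDP\mid\text{signs},H=h]=d_h\ind\{\text{some prefix passes}\}$. This holds for every rejection size, so no optional-stopping argument is needed.

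It remains to compute the probability that some prefix passes, which equals the probability that the walk with step $+1$ (probability $a=A_h$) and step $-L$ reaches level $L$ within $m$ steps. Upward steps have size one, so the first passage lands exactly at $L$. By the cycle lemma (hitting-time theorem), the probability that first passage to $L$ occurs at step $n=(L+1)j-1$, with $Lj$ up-steps and $j-1$ down-steps, is
\[
\frac{L}{n}\binom{n}{j-1}a^{Lj}(1-a)^{j-1}.
\]
Summing over $n\le m$ gives $G_{m,L}(a)$. Averaging over $H$ then yields \eqref{eq:random-design-fdr}. Finally, $\E_J(\FDR_J)$ is an average over label vectors, so some label vector $J$ attains at least this average, which gives the deterministic-configuration claim.
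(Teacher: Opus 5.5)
Your proposal is correct and follows the paper's proof essentially step for step. The ordered signs are i.i.d.\ with success probability $A_h$ because the ordering depends only on the magnitudes $\min(P_i,1-P_i)$; rejection is the event that the walk $k_+-Lk_-$ reaches $L$ within $m$ steps, which the cycle lemma counts; and the rejection set (the last passing prefix) is a function of the signs alone, so the selected positive coordinates are independently null with probability $d_h$. One wording fix: labels are not independent of signs given $H=h$, but what you actually use is true, namely that the (label, sign) pairs are i.i.d., so given the whole sign vector the labels are independent and each positive-sign coordinate is null with probability $d_h$.
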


\begin{deferredproof}{A}{Proof of Proposition~\ref{prop:first-passage}}{app:proof-first-passage}
For both the null density $f_h$ and the alternative density $g_r$, set $U_i=\min(P_i,1-P_i)$.  This variable is uniform on $(0,1/2)$ and independent of the indicator $S_i=\ind\{P_i<1/2\}$.  Ordering by decreasing $|W_i|=1/2-U_i$ is the same as ordering by increasing $U_i$.  Conditional on $H=h$, the independent random labels therefore give
\[
 \Prb(S_i=1\mid H=h)=A_h,
 \qquad
 \Prb(i\in\Hnull\mid S_i=1,H=h)=d_h.
\]
Because this ordering depends only on the $U_i$, the ordered signs are conditionally independent and identically distributed Bernoulli$(A_h)$ variables.

Let $D_k$ and $C_k$ be the discovery- and control-side sign counts among the first $k$ ordered coordinates, and put $Y_k=D_k-LC_k$.  At that prefix the knockoff+ inequality with $q=1/L$ is equivalent to
\[
 Y_k\ge L.
\]
Thus rejection occurs exactly when the random walk with increments $+1$ on a positive sign and $-L$ on a negative sign first reaches $L$ by time $m$.  If the first-passage prefix contains $j-1$ negative increments, it contains $Lj$ positive increments and has length $(L+1)j-1$.  The cycle lemma \citep{raney-1960} gives
\[
 \frac{L}{(L+1)j-1}
 \binom{(L+1)j-1}{j-1}
\]
first-passage sign sequences of this composition.  Summing their probabilities gives \eqref{eq:first-passage-prob}.  This prefix is used only to count the rejection event.  Because $\widehat t$ is the smallest passing threshold, the actual rejection set corresponds to the last passing prefix and can be larger.

Conditional on the ordered sign sequence, the selected set is determined without using the null--alternative labels.  Every selected positive sign is null with conditional probability $d_h$, independently of the other selected positive signs.  Hence the conditional expected FDP equals $d_h$ on the rejection event and zero otherwise.  Averaging over the signs and then over the two latent states proves \eqref{eq:random-design-fdr}.  The last assertion follows because at least one deterministic label vector has FDR no smaller than their average.
\end{deferredproof}

\begin{corollary}[Dense versus sparse regimes]
\label{cor:first-passage-phase}
For $a\in(0,1)$, let $G_{\infty,L}(a)=\lim_{m\to\infty}G_{m,L}(a)$.  Then
\[
 G_{\infty,L}(a)=
 \begin{cases}
  1,&a\ge L/(L+1),\\
  \zeta(a)^{-L},&a<L/(L+1),
 \end{cases}
\]
where, in the second case, $\zeta(a)>1$ is the unique nontrivial root of
\[
 a\zeta+(1-a)\zeta^{-L}=1.
\]
Therefore a state with $A_h>c_q=L/(L+1)$ yields a dense population crossing, whereas a state below that boundary still contributes the strictly positive sparse first-passage probability $\zeta(A_h)^{-L}$.  Moreover, there is a sequence of deterministic label vectors $J_m$ with $|\Hnull|/m\to\pi_0$ such that
\[
 \liminf_{m\to\infty}\FDR_{J_m}
 \ge
 \frac12\sum_{h=0}^1d_hG_{\infty,L}(A_h).
\]
\end{corollary}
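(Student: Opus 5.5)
Three things need proof: the formula for $G_{\infty,L}$, the positivity of the sparse contribution, and the deterministic-label liminf.

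\textbf{The limit $G_{\infty,L}$.} Since $G_{m,L}(a)$ is the probability that the walk with steps $+1$ (probability $a$) and $-L$ (probability $1-a$) reaches level $L$ by time $m$, it is nondecreasing in $m$. Monotone convergence then identifies $G_{\infty,L}(a)$ with the probability that the walk ever reaches $L$.

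The drift is $a-L(1-a)=(L+1)a-L$, and we split on its sign.
\begin{itemize}
\item If $a>L/(L+1)$, the drift is positive, the strong law sends the walk to $+\infty$, and the hitting probability is one.
\item If $a=L/(L+1)$, the walk is a mean-zero walk with bounded, nondegenerate increments, so it is recurrent in the sense that $\limsup Y_k=+\infty$ almost surely. Because upward steps have size one, it must pass through $L$. This again gives probability one.
\item If $a<L/(L+1)$, we use an exponential martingale. Consider $\phi(\zeta)=a\zeta+(1-a)\zeta^{-L}$ on $(0,\infty)$. It is strictly convex with $\phi(1)=1$, $\phi'(1)=a-L(1-a)<0$, and $\phi(\zeta)\to\infty$ as $\zeta\to\infty$. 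Hence there is a unique root $\zeta(a)>1$, and $M_k=\zeta^{Y_k}$ is a martingale.
\end{itemize}

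\textbf{Hitting probability in the negative-drift case.} Let $\tau$ be the first hitting time of $L$. Upward steps have size one, so $Y_{\tau}=L$ exactly on $\{\tau<\infty\}$. The stopped martingale $M_{k\wedge\tau}$ is bounded by $\zeta^L$, because before $\tau$ we have $Y_k\le L-1$. On $\{\tau=\infty\}$ the negative drift gives $Y_k\to-\infty$, so $M_k\to0$. Optional stopping and bounded convergence then give
\[
1=\E M_0=\zeta^L\,\Prb(\tau<\infty),
\]
so $\Prb(\tau<\infty)=\zeta(a)^{-L}$, which is strictly positive. The main care point is this case: no overshoot because steps are $+1$, and uniform boundedness of the stopped martingale.

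\textbf{The dense/sparse interpretation.} This follows directly. For $A_h>c_q$ the hitting probability is one. For $A_h<c_q$ it is $\zeta(A_h)^{-L}>0$, so each latent state contributes the positive term $d_hG_{\infty,L}(A_h)$ in \eqref{eq:random-design-fdr}.

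\textbf{Deterministic label vectors.} By Proposition~\ref{prop:first-passage}, for each $m$,
\[
\E_J(\FDR_J)=\tfrac12\sum_h d_hG_{m,L}(A_h).
\]
Choosing the best label vector is not enough, because we also need $|\Hnull|/m\to\pi_0$. Restrict to the set
\[
\mathcal J_m=\{J:\ \bigl||\Hnull|/m-\pi_0\bigr|\le\delta_m\},
\]
with $\delta_m\downarrow0$ slowly, for instance $\delta_m=m^{-1/4}$. By Hoeffding's inequality, $\Prb(J\notin\mathcal J_m)\le 2e^{-2m^{1/2}}\to0$. Since $\FDR_J\le1$,
\[
\max_{J\in\mathcal J_m}\FDR_J\ \ge\ \E_J(\FDR_J\mid J\in\mathcal J_m)\ \ge\ \E_J(\FDR_J)-\Prb(J\notin\mathcal J_m).
\]
Pick $J_m$ attaining this maximum. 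Letting $m\to\infty$ and using $G_{m,L}\uparrow G_{\infty,L}$ yields
\[
\liminf_{m\to\infty}\FDR_{J_m}\ge\tfrac12\sum_h d_hG_{\infty,L}(A_h),
\]
with $|\Hnull|/m\to\pi_0$ built in.
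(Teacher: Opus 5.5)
Your proposal is correct and follows essentially the same route as the paper: the drift trichotomy, with the exponential martingale $\zeta^{Y_k}$ stopped at the first hitting time of $L$ (no overshoot because upward steps have size one), followed by restricting to label vectors with $|\Hnull|/m$ close to $\pi_0$ and choosing a (near-)maximizer among them. Your departures are cosmetic: you make the monotonicity of $G_{m,L}$ in $m$ and the concentration rate (Hoeffding with $\delta_m=m^{-1/4}$) explicit, and you take an exact maximizer over the finite label set instead of one within $1/m$ of the supremum.
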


\begin{deferredproof}{A}{Proof of Corollary~\ref{cor:first-passage-phase}}{app:proof-first-passage-phase}
Let $Y_k$ be the walk that steps $+1$ with probability $a$ and $-L$ otherwise, as in the definition of $G_{m,L}(a)$.  Its mean increment is $(L+1)a-L$.  At positive drift it tends to $+\infty$ almost surely.  At zero drift, the finite-variance mean-zero random walk oscillates, so its limsup is $+\infty$ almost surely.  In either case it reaches level $L$: its upward step has size one and therefore cannot jump over that level.

Now suppose the drift is negative and let $\tau_L=\inf\{k:Y_k=L\}$.  Then $Y_k\to-\infty$ almost surely.  For $f(x)=ax+(1-a)x^{-L}$, we have $f(1)=1$, $f'(1)<0$, $f''(x)>0$ for $x>0$, and $f(x)\to\infty$ as $x\to\infty$.  Hence $f(x)=1$ has exactly one root $\zeta>1$.  For this root, the process $M_k=\zeta^{Y_k}$ is a nonnegative martingale.  Before $\tau_L$, the walk is at most $L-1$, and at $\tau_L$ it equals $L$, so $M_{\tau_L\wedge n}\le\zeta^L$.  Optional stopping at the bounded time $\tau_L\wedge n$ and dominated convergence give
\[
 1=\E M_{\tau_L\wedge n}
 \longrightarrow \zeta^L\Prb(\tau_L<\infty),
\]
because $M_n\to0$ on $\{\tau_L=\infty\}$.  Hence $\Prb(\tau_L<\infty)=\zeta^{-L}$.

For the deterministic sequence, let $B_m$ denote the right-hand side of \eqref{eq:random-design-fdr}.  Then $B_m\to B_\infty:=\frac12\sum_hd_hG_{\infty,L}(A_h)$.  Let $N_{0,m}=|\Hnull|$ be the random number of null labels.  Then $N_{0,m}/m\to\pi_0$ in probability.  Choose numbers $\delta_m\downarrow0$ so slowly that
\[
 T_m=\{|N_{0,m}/m-\pi_0|\le\delta_m\}
 \quad\text{satisfies}\quad
 \Prb(T_m)\to1.
\]
Since $0\le\FDR_J\le1$,
\[
 B_m
 \le
 \Prb(T_m)\sup_{J\in T_m}\FDR_J
 +\Prb(T_m^c).
\]
Choose a deterministic $J_m\in T_m$ whose FDR is within $1/m$ of the displayed supremum.  The last two displays give $|\Hnull|/m\to\pi_0$ and $\liminf_m\FDR_{J_m}\ge B_\infty$.
\end{deferredproof}

The exact formula above is specific to reciprocal levels $q=1/L$ and the randomized-label experiment.  Its deterministic conclusion follows by averaging over the random labels and selecting one fixed configuration; it is not an exact formula for every fixed configuration.
\end{revision}

\section{Fixed Gaussian correlation}
\label{sec:fixedrho}

One could obtain a Gaussian counterexample simply by letting the correlation tend to one.  The next result is considerably stronger: at every fixed positive equicorrelation, it gives the exact limiting FDR and identifies which realizations of the common factor lead to rejection.  \rev{Equicorrelated Gaussian models also provide a classical setting for studying the asymptotic FDR of the Benjamini--Hochberg procedure \citep{finner-dickhaus-roters-2007}.}

Let $Z_0,Z_1,Z_2,\ldots$ be independent standard normal variables.  Fix $\rho\in(0,1)$ and, for each $m$, define
\begin{equation}
 W_i^{(m)}=\sqrt\rho\,Z_0+\sqrt{1-\rho}\,Z_i,
 \qquad i=1,\ldots,m.
 \label{eq:equicorr}
\end{equation}
\rev{Let $I_m$ denote the $m\times m$ identity matrix and $\mathbf1$ the all-ones vector.}  Then $W^{(m)}$ is standard Gaussian with equicorrelation $\rho$ and positive-definite covariance
\[
 \Sigma_m=(1-\rho)I_m+\rho\mathbf1\mathbf1^{\mathsf T}.
\]

Conditioning on the common factor $Z_0=z$ makes the scores independent with the $N(\sqrt\rho\,z,1-\rho)$ law.  Write $s=\sqrt{1-\rho}$, let $\Phi$ be the standard normal distribution function, and let $\barPhi=1-\Phi$.  For a threshold $t>0$, the conditional upper- and lower-tail probabilities of a single score are
\[
 \begin{aligned}
 a_z(t)&=\Prb(W_i^{(m)}\ge t\mid Z_0=z)=\barPhi\!\left(\frac{t-\sqrt\rho\,z}{s}\right),\\
 b_z(t)&=\Prb(W_i^{(m)}\le-t\mid Z_0=z)=\barPhi\!\left(\frac{t+\sqrt\rho\,z}{s}\right).
 \end{aligned}
\]

\begin{theorem}[Exact fixed-equicorrelation limit]
\label{thm:fixedrho}
Fix $q\in(0,1)$ and $\rho\in(0,1)$.  Apply the knockoff+ threshold to the all-null scores in \eqref{eq:equicorr}, and let $\widehat\Rset_m$ and $\FDR_m$ denote its rejection set and FDR.  Then
\begin{equation}
 \Prb(\widehat\Rset_m\ne\varnothing\mid Z_0=z)
 \longrightarrow \ind\{z>0\},
 \qquad z\ne0.
 \label{eq:conditional-dichotomy}
\end{equation}
Consequently,
\[
 \FDR_m\longrightarrow\frac12.
\]
In particular, if $q<1/2$, FDR control fails for all sufficiently large $m$.
\end{theorem}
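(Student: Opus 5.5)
Conditional on $Z_0=z$, the scores $W_i^{(m)}$ are i.i.d.\ $N(\sqrt\rho\,z,1-\rho)$, so the plan is to prove the conditional dichotomy \eqref{eq:conditional-dichotomy} separately for $z>0$ and $z<0$, using laws of large numbers for the two tail counts. The FDR limit then follows from \eqref{eq:global-null}: $\FDR_m=\E\{\Prb(\widehat\Rset_m\ne\varnothing\mid Z_0)\}$, and bounded convergence together with $\Prb(Z_0=0)=0$ gives $\FDR_m\to\Prb(Z_0>0)=1/2$.

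\textbf{Case $z>0$.} We exhibit one passing threshold. Put $\mu=\sqrt\rho\,z>0$. As $t\downarrow0$ the ratio $b_z(t)/a_z(t)$ tends to $\barPhi(\mu/s)/\Phi(\mu/s)<1$. It also tends to $0$ as $t\to\infty$, and it is continuous and strictly decreasing in between; the decrease follows from the Gaussian likelihood ratio $e^{-2\mu t/s^2}$ between the densities at $-t$ and $t$. Fix any $t_0>0$ with $b_z(t_0)/a_z(t_0)<q$, for instance a large $t_0$. By the conditional law of large numbers, $N_+(t_0)/m\to a_z(t_0)>0$ and $N_-(t_0)/m\to b_z(t_0)$. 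Hence
\[
 \frac{1+N_-(t_0)}{N_+(t_0)\vee1}\to\frac{b_z(t_0)}{a_z(t_0)}<q
\]
in probability. As in the proof of Lemma~\ref{lem:trigger}, the counts are constant between consecutive magnitudes. Therefore the smallest $|W_i|$ that is at least $t_0$ also passes, and a rejection occurs with conditional probability tending to one.

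\textbf{Case $z<0$.} This is the main obstacle, because we must rule out every data-chosen threshold simultaneously. When $\mu<0$, symmetry gives $b_z(t)>a_z(t)$ for every $t>0$, so $b_z(t)/a_z(t)>1$. We split thresholds into a bulk range and an extreme range.
\begin{itemize}
\item \emph{Bulk range $t\le T$, for fixed $T$.} By Glivenko--Cantelli, $\sup_t|N_\pm(t)/m-\text{mean}|\to0$. On $[0,T]$ the means are bounded below, and $b_z\ge a_z$, so uniformly the ratio is at least $1-o(1)>q$.
\item \emph{Extreme range $t>T$.} Here uniform LLN fails, since the counts are small. Instead we use a coupling. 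With $X_i=W_i-\mu$ symmetric, the events $\{W_i\ge t\}$ and $\{W_i\le -t\}$ correspond to $X_i\ge t-\mu$ and $X_i\le -t-\mu$, that is $|X_i|\ge t+|\mu|$ on the positive side and $|X_i|\ge t-|\mu|$ on the negative side.
\end{itemize}

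In the extreme range the negative side thus has a strictly lower magnitude cutoff. Conditional on the $|X_i|$, the signs of the $X_i$ are i.i.d.\ fair coins. Order the $|X_i|$ decreasingly. Every score counted in $N_+(t)$ is positive with $|X_i|\ge t+|\mu|$. Among the coordinates with $|X_i|\ge t+|\mu|$, the positive-sign count is $D$ and the negative-sign count is $C$, and $N_-(t)\ge C$.

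The knockoff+ event at $t$ therefore implies $(1+C)/(D\vee1)\le q$ for some prefix of the fair-coin sign sequence. For $q<1$ this is a first-passage event of the same type as in Proposition~\ref{prop:first-passage}, and it has probability bounded away from $1$ but not tending to $0$. So this crude bound is insufficient, and it must be sharpened using the extra controls $\{t-|\mu|\le|X_i|<t+|\mu|\}$, which contribute only to $N_-$.

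First I would show that, with high probability, for all $t$ in a range $T\le t\le t_m$ with $m\,\barPhi(t_m/s)\to\infty$ slowly, the annulus count is of order $m\barPhi((t-|\mu|)/s)$. This count dominates $N_+(t)\approx m\barPhi((t+|\mu|)/s)$ by a factor $e^{2|\mu|t/s^2}\to\infty$, which can be verified by a union bound over a geometric grid of $t$ values with Chernoff bounds. Second, for $t>t_m$, the prefix containing the top $O(1)$-many scores has a fair-coin walk that must reach $1/q$ net successes. This happens with probability at most about $\zeta^{-L}$ in the sense of Corollary~\ref{cor:first-passage-phase}. Letting the number of coordinates in the top region be a slowly growing $k_m$, a positive sign excess must occur within those $k_m$ coordinates, yet the annulus controls accumulate linearly. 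Making this second step quantitative, so that the probability tends to zero rather than merely staying below one, is the delicate point. If it fails, I would fall back on the monotone likelihood ratio: given the magnitudes $|W_i|$, each sign satisfies $\Prb(W_i>0\mid|W_i|)=1/(1+e^{2|\mu||W_i|/s^2})\to0$ for large magnitudes. Hence the top-$k_m$ signs are all negative with high probability, which kills every extreme-range threshold.
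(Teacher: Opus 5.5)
\textbf{Gap: the range $t>T$ for $z<0$ is not actually proved.} Your case $z>0$, your Glivenko--Cantelli treatment of thresholds in $[0,T]$ for $z<0$, and the final averaging over $Z_0$ all match the paper. The problem is the range above $T$.

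Your main device conditions on $|X_i|=|W_i-\mu|$. Under that conditioning the signs are fair coins, so it throws away exactly the sign bias that prevents rejection; that is why your first-passage bound stays bounded away from zero. The fallback conditions on the right thing, $|W_i|$, but its conclusion cannot cover the range you assign to it. Above a fixed $T$ there are $\Theta(m)$ magnitudes, and roughly a fraction $p_z(T)>0$ of their signs are positive, where $p_z(u)=\{1+e^{2|\mu|u/s^2}\}^{-1}$. So ``all top signs negative'' can hold only above some level $t_m\to\infty$ with $m\barPhi\{(t_m+|\mu|)/s\}\to0$, a requirement you never state. Everything in $[T,t_m]$ then rests on your grid-concentration Step~1, which you assert but do not prove. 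Near $t_m$ the count $N_+(t)$ has vanishing mean, so it is not ``$\approx$'' that mean; only a one-sided bound is available there.

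\textbf{The missing idea.} You never need the signs to be all negative, only that no prefix has too many positives. Order the magnitudes of $W$ decreasingly. The prefix of length $k$ passes only if its positive count satisfies $D_k\ge(k+1)/(1+q)>c_qk$, where $c_q=1/(1+q)>1/2$. Conditional on the ordered magnitudes, the signs are independent with success probabilities $p_z(U_{(k)})$. For every magnitude at least $T$, this probability is at most $p_z(T)$. Binomial domination and a Chernoff bound then give
\[
 \Prb\{D_k\ge c_qk\text{ for some }k\le K_T\mid U_{(1)},\ldots,U_{(m)}\}
 \le\sum_{k\ge1}\exp\{-kD_{\rm Ber}(c_q\|p_z(T))\},
\]
where $K_T$ is the number of magnitudes at least $T$. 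This bound does not depend on $m$, and it tends to zero as $T\to\infty$ because $p_z(T)\to0$. Taking $\limsup_m$ and then $T\to\infty$ disposes of every threshold above $T$ at once, with no $t_m$, annuli, or grids. This is the paper's argument.

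Your three-range route can be completed:
\begin{itemize}
 \item choose $t_m$ with $m\barPhi(t_m/s)\to\infty$ but $m\barPhi\{(t_m+|\mu|)/s\}\to0$, which is possible because their ratio decays like $e^{-|\mu|t_m/s^2}$;
 \item on $[T,t_m]$, use grid spacing below $|\mu|$, monotone interpolation of $N_\pm$, and one-sided Chernoff bounds at the $O(\sqrt{\log m})$ grid points.
\end{itemize}
As written, however, it is a sketch with its hardest step left open.
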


\noindent\emph{Proof idea.}
Condition on $Z_0=z$.  If $z>0$, standard Gaussian tail bounds give $b_z(t)/a_z(t)\to0$ as $t\to\infty$, so a fixed threshold passes with probability tending to one by the law of large numbers.  If $z<0$, put $d=-\sqrt\rho\,z>0$.  For $X\sim N(-d,s^2)$, the conditional probability of a positive sign at magnitude $u$ is
\begin{equation}
 p_z(u)=\Prb(X>0\mid |X|=u)
 =\{1+\exp(2du/s^2)\}^{-1},
 \label{eq:negative-factor-sign-probability}
\end{equation}
which decreases to zero.  Choose $T$ so that $p_z(T)$ is below the passing fraction $c_q=1/(1+q)$.  For thresholds above $T$, conditional on the ordered magnitudes, the signs are independent Bernoulli variables; a Chernoff union bound controls every passing prefix, with an upper bound that tends to zero as $T\to\infty$.  On $[0,T]$, uniform empirical convergence keeps the positive fraction below $1/2<c_q$.  Thus the conditional rejection probability tends to $\ind\{z>0\}$; dominated convergence over $Z_0$ gives the limit $1/2$.  Appendix~\ref{app:proof-fixedrho} gives the complete proof.

\begin{storedproof}{B}{Proof of Theorem~\ref{thm:fixedrho} and the rate calculation}{app:proof-fixedrho}
Use $a_z(t)$, $b_z(t)$, and $s=\sqrt{1-\rho}$ as defined immediately before Theorem~\ref{thm:fixedrho}.  First fix $z>0$.  Writing $\phi$ for the standard normal density, the standard Mills inequalities
\[
 \frac{x}{1+x^2}\,\phi(x)\le\barPhi(x)\le\frac{\phi(x)}{x},
 \qquad x>0,
\]
show that $b_z(t)/a_z(t)\to0$ as $t\to\infty$.  Choose a fixed threshold $t_z$ for which this ratio is below $q$.  Conditional on $Z_0=z$, the law of large numbers gives
\[
 \frac{1+N_-(t_z)}{N_+(t_z)\vee1}
 \longrightarrow \frac{b_z(t_z)}{a_z(t_z)}<q
 \quad\text{almost surely}.
\]
The counts are constant between consecutive observed magnitudes.  On the event that $t_z$ passes, the smallest observed magnitude at least $t_z$ is therefore a passing element of $\mathcal T$.  The procedure rejects with conditional probability tending to one.

Now fix $z<0$ and put $d=-\sqrt\rho\,z>0$.  For one conditional score $X\sim N(-d,s^2)$, let $U=|X|$ and $S=\ind\{X>0\}$.  Equation~\eqref{eq:negative-factor-sign-probability} gives $p_z(u):=\Prb(S=1\mid U=u)$, which decreases to zero.  Order the $m$ magnitudes from largest to smallest, and let $D_k$ be the number of positive signs among the first $k$.  A prefix can pass only if
\[
 D_k\ge\frac{k+1}{1+q}>c_qk,
 \qquad c_q=\frac1{1+q}>\frac12.
\]

Fix $T>0$, to be sent to infinity after taking the limit in $m$, and let $K_T$ be the number of magnitudes at least $T$.  Because $p_z(T)<1/2<c_q$, the required upper-tail inequality is in the Chernoff regime.  The magnitude distribution is continuous, so ties have probability zero; conditional on the ordered magnitudes, their associated signs remain independent Bernoulli variables with success probabilities $p_z(U_{(k)})$.  Among the first $K_T$, these probabilities are at most $p_z(T)$.  For $c>p$, write
\[
 D_{\rm Ber}(c\|p)
 =c\log\frac cp+(1-c)\log\frac{1-c}{1-p}
\]
for the binary relative entropy.  Stochastic domination by a binomial variable and a Chernoff bound give
\[
 \begin{aligned}
 &\Prb\{D_k\ge c_qk\text{ for some }k\le K_T
       \mid U_{(1)},\ldots,U_{(m)}\}\\
 &\qquad\le
 \sum_{k\ge1}\exp\{-kD_{\rm Ber}(c_q\|p_z(T))\}.
 \end{aligned}
\]
This geometric upper bound tends to zero as $T\to\infty$ because $p_z(T)\to0$.

It remains to rule out bounded thresholds.  The Glivenko--Cantelli theorem applied to the two half-line indicator classes gives
\[
 \sup_{0\le t\le T}
 \left|
 \frac{N_+(t)}{N_+(t)+N_-(t)}
 -\frac{\Prb(X\ge t)}{\Prb(|X|\ge t)}
 \right|\longrightarrow0
 \quad\text{in probability}.
\]
Here the denominator is uniformly well behaved because $\Prb(|X|\ge t)\ge\Prb(|X|\ge T)>0$ on $[0,T]$.  Since $X\sim N(-d,s^2)$ with $d>0$, $\Prb(X\ge t)<\Prb(X\le-t)$ for every $t\ge0$.  The limiting positive fraction is therefore continuous and strictly below $1/2$ throughout $[0,T]$, and hence uniformly below $c_q$.  Because a passing threshold requires an empirical positive fraction greater than $c_q$, the probability that any threshold in $[0,T]$ passes tends to zero.  Taking the upper limit in $m$ and then letting $T\to\infty$ proves that the conditional rejection probability tends to zero for $z<0$.

This establishes \eqref{eq:conditional-dichotomy}.  Under the global null, the FDR is the rejection probability.  Dominated convergence over $Z_0$ therefore gives $\FDR_m\to\Prb(Z_0>0)=1/2$.

\par\medskip\noindent\emph{Quantitative lower bound.}\label{app:rate-fixedrho}
Write $s=\sqrt{1-\rho}$ and, for large $m$, set
\[
 t_m=\sqrt{2(1-\rho)(\log m-\log\log m)},
 \qquad
 \delta_m=\frac{\sqrt{1-\rho}\,\log(2/q)}{2\sqrt{2\rho(\log m-\log\log m)}}.
\]
At $z=\delta_m$ the standardized tail arguments $x_m=(t_m-\sqrt\rho\,\delta_m)/s$ and $y_m=(t_m+\sqrt\rho\,\delta_m)/s$ satisfy $\tfrac12(y_m^2-x_m^2)=\log(2/q)$, so the Mills bounds above give
\[
 \frac{b_{\delta_m}(t_m)}{a_{\delta_m}(t_m)}
 \le\{1+o(1)\}\,\frac q2,
\]
and the same monotonicity in $z$ extends the bound to every $z\ge\delta_m$.  The scale $t_m$ is calibrated so that the exceedance count still diverges: $\inf_{z\ge\delta_m}m\,a_z(t_m)=m\,a_{\delta_m}(t_m)$ is of order $\sqrt{\log m}$ and hence tends to infinity.
A Chernoff bound therefore shows that, uniformly over $z\ge\delta_m$, the empirical ratio $(1+N_-(t_m))/(N_+(t_m)\vee1)$ is at most $q$ with probability $1-\exp\{-c\sqrt{\log m}\}$ for some $c>0$.  Integrating over the common factor,
\[
 \FDR_m
 \ge\barPhi(\delta_m)-o\!\left((\log m)^{-1/2}\right)
 =\frac12-\frac{\sqrt{1-\rho}\,\log(2/q)}{4\sqrt{\pi\rho\log m}}
 +o\!\left((\log m)^{-1/2}\right).
\]
\end{storedproof}

The conditional phase transition is governed by the shared factor rather than by the size of one pairwise correlation.  The result applies, for example, at $\rho=0.01$; when many coordinates share one factor, small pairwise correlation need not mean weak overall dependence.

\begin{remark}[A quantitative lower bound]
\label{rem:rate}
A quantitative version of the positive-factor argument gives
\[
 \FDR_m
 \ge\frac12-\frac{\sqrt{1-\rho}\,\log(2/q)}{4\sqrt{\pi\rho\log m}}
 +o\!\left((\log m)^{-1/2}\right).
\]
Appendix~\ref{app:rate-fixedrho} gives the derivation.  The gap between this lower bound and one half is of order $(\log m)^{-1/2}$, with a constant that grows as $\rho$ decreases.  This slowly decaying one-sided guarantee is consistent with the finite-$m$ behavior in Figure~\ref{fig:numerics}(b).  Theorem~\ref{thm:fixedrho} establishes convergence to $1/2$; the calculation does not give a matching upper rate.
\end{remark}

\begin{corollary}[PRDS still does not suffice]
\label{cor:prds}
Let $P_i^{(m)}=1-\Phi(W_i^{(m)})$.  Each $P_i^{(m)}$ is uniform on $(0,1)$, and the vector is PRDS.  For every fixed $\rho>0$, the mirror threshold has FDR converging to $1/2$.  Hence, for every $q<1/2$, its FDR is greater than $q$ for all sufficiently large $m$.
\end{corollary}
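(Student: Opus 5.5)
The plan has three parts: verify the uniform margins, verify PRDS, and transfer the FDR limit from Theorem~\ref{thm:fixedrho} through Proposition~\ref{prop:equivalence}.

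\emph{Uniform margins.} Each $W_i^{(m)}$ is standard normal, so $\Phi(W_i^{(m)})$ is uniform on $(0,1)$. Hence $P_i^{(m)}=1-\Phi(W_i^{(m)})$ is uniform as well.

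\emph{PRDS.} I would reuse the latent-variable monotonicity argument from the proof of Proposition~\ref{prop:nongaussian}, with the continuous factor $Z_0$ in place of the binary $H$. Since $P_i$ is a decreasing function of $W_i$, write $P_i=h(\sqrt\rho Z_0+sZ_i)$ with $h=1-\Phi$ decreasing and $s=\sqrt{1-\rho}$. Fix an increasing set $D$ and a coordinate $i$. Conditional on $P_i=u$, equivalently $W_i=w$ with $w$ decreasing in $u$, three facts hold.
\begin{itemize}
\item[(i)] The other coordinates are conditionally independent of $W_i$ given $Z_0$.
\item[(ii)] The conditional law of $Z_0$ given $W_i=w$ is $N(\sqrt\rho\,w,1-\rho)$. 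It is stochastically increasing in $w$, hence stochastically decreasing in $u$.
\item[(iii)] Given $Z_0=z$, each $P_j$ with $j\ne i$ is stochastically decreasing in $z$.
\end{itemize}
Define $g(u,z)=\Prb\{(u,P_{-i})\in D\mid Z_0=z\}$. This function is nondecreasing in $u$ because $D$ is increasing, and nonincreasing in $z$ by (iii). Then $Q(u)=\E\{g(u,Z_0)\mid W_i=h^{-1}(u)\}$. Increasing $u$ raises $g$ directly, and it shifts the law of $Z_0$ downward, which also raises $\E g$. The same telescoping decomposition used in Proposition~\ref{prop:nongaussian}, or the usual coupling of stochastically ordered laws, makes this rigorous. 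Alternatively, one can cite the classical fact that Gaussian vectors with nonnegative correlations give PRDS one-sided $p$-values \citep{benjamini-yekutieli-2001}.

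\emph{FDR limit.} Because $\Phi$ is continuous and symmetric, the remark after Proposition~\ref{prop:equivalence} lets me apply the mirror rule to $P_i$ with $F_0=\Phi$. This amounts to the knockoff+ threshold on $W_i$ after the monotone map $t\mapsto$ threshold.

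There is one subtlety here. The candidate sets correspond exactly: $P_i\le u$ iff $W_i\ge\Phi^{-1}(1-u)$, and $P_i\ge1-u$ iff $W_i\le-\Phi^{-1}(1-u)$. The counts $R,L$ therefore equal $N_+,N_-$ at the matched thresholds, and the largest admissible $u$ matches the smallest admissible $t$. Ties occur with probability zero. The two rejection sets therefore coincide almost surely, and Theorem~\ref{thm:fixedrho} gives $\FDR_m\to1/2$. For $q<1/2$ this exceeds $q$ eventually.

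I expect the main obstacle to be stating the PRDS monotonicity cleanly with a continuous latent variable. The key step is the inequality that $g(u,\cdot)$ is nonincreasing, combined with the stochastic ordering of $Z_0$ given $P_i$. I would handle it as in Proposition~\ref{prop:nongaussian}: bound the increment $Q(v)-Q(u)$ by splitting it into a within-$g$ change plus a change in the mixing law, and show each part is nonnegative.
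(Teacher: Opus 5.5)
Your proposal is correct. For uniformity and the FDR limit it follows the paper's proof. Your appeal to the $F_0=\Phi$ remark after Proposition~\ref{prop:equivalence} is exactly what is needed, because $P_i^{(m)}$ is $1-\Phi(W_i^{(m)})$ rather than $1/2-W_i^{(m)}$.

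The difference is the PRDS step. The paper gives no argument there and simply cites the classical result for one-sided $p$-values from Gaussian vectors with nonnegative correlations \citep{benjamini-yekutieli-2001,sarkar-2002}. Your primary route instead conditions on the common factor and reruns the latent-state argument of Proposition~\ref{prop:nongaussian} with a continuous mixing variable. That argument is sound. For $u<v$, write $\E_u$ for expectation under the conditional law $N\{\sqrt\rho\,\Phi^{-1}(1-u),1-\rho\}$ of $Z_0$ given $P_i=u$. Then
\[
 Q(v)-Q(u)=\E_v\{g(v,Z_0)-g(u,Z_0)\}
 +\bigl[\E_v\{g(u,Z_0)\}-\E_u\{g(u,Z_0)\}\bigr].
\]
The first term is nonnegative because $D$ is increasing. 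The second is nonnegative because $g(u,\cdot)$ is nonincreasing (couple $P_j=1-\Phi(\sqrt\rho\,z+sZ_j)$ through the same $Z_j$) and the law under $\E_v$ is stochastically smaller.

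Your route is self-contained, and it shows that both PRDS counterexamples in the paper come from one mechanism: a monotone latent variable with conditionally independent coordinates. It does rely on the single-factor structure with nonnegative loadings, although it extends verbatim to heterogeneous nonnegative loadings. The cited theorem needs no factor representation and covers every covariance matrix with nonnegative entries.
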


\begin{proof}
Uniformity and equivalence to the signed-score rule follow from Proposition~\ref{prop:equivalence}.  The PRDS assertion is a well-known result for one-sided $p$-values from a multivariate normal vector with nonnegative correlations; see \citet{benjamini-yekutieli-2001} and \citet{sarkar-2002}.  Since every off-diagonal entry of $\Sigma_m$ equals $\rho>0$, that result applies.  The false discovery statement follows from Theorem~\ref{thm:fixedrho}.
\end{proof}

\rev{A direct finite example is also useful.  Take $q=0.1$, $m=11$, and $\rho=0.8$.  On the event}
\[
 Z_0>1,
 \qquad Z_i>-2\quad(i=1,\ldots,11),
\]
all scores are positive because $\sqrt{0.8}-2\sqrt{0.2}=0$.  Lemma~\ref{lem:trigger} gives
\[
 \FDR\ge \barPhi(1)\Phi(2)^{11}\approx0.12317>0.1.
\]
The covariance is positive definite, the joint density is smooth and positive, and the scores are almost surely distinct.

\begin{revision}
The equicorrelation model has one important geometric feature: every null score loads on the common factor in the same direction.  When the factor is negative, all null signs are conditionally biased toward the control side.  This protects half of the factor realizations.  The next result formalizes that observation and allows the nonnegative loadings to differ across nulls.

\begin{theorem}[A ceiling under one-sign Gaussian loadings]
\label{thm:positive-loading-upper}
Fix $q\in(0,1)$.  Let $\Hnull$ be any set of true nulls and, for $i\in\Hnull$, let $\lambda_i\in[0,1)$.  Suppose that
\[
 W_i=\lambda_iZ_0+\sqrt{1-\lambda_i^2}\,Z_i,
\]
where $Z_0$ and $(Z_i)_{i\in\Hnull}$ are independent standard normal variables.  The nonnull score vector $(W_j)_{j\notin\Hnull}$ may otherwise be arbitrary and may depend on $Z_0$.  We require one separation condition: given $Z_0$, the nonnull vector is independent of the null residuals $(Z_i)_{i\in\Hnull}$.  Then the knockoff+ threshold satisfies the finite-sample bound
\begin{equation}
 \FDR\le\frac{1+q}{2}.
 \label{eq:positive-loading-upper}
\end{equation}
\end{theorem}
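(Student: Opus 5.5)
Condition on the common factor and split into the two events $\{Z_0>0\}$ and $\{Z_0\le 0\}$:
\[
 \FDR=\E\{\FDP\,\ind(Z_0>0)\}+\E\{\FDP\,\ind(Z_0\le0)\}.
\]
Since $\FDP\le1$, the first term is at most $\Prb(Z_0>0)=1/2$. The target bound $(1+q)/2=1/2+q/2$ therefore follows once we show that
\[
 \E\{\FDP\mid Z_0=z\}\le q\quad\text{for every } z\le 0 .
\]
Averaging this over $\{Z_0\le0\}$, which has probability $1/2$, then gives the remaining $q/2$.

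To prove the conditional claim, fix $z\le0$ and work under the conditional law given $Z_0=z$, with the plan of applying Proposition~\ref{prop:conservative-signs} there. Under this law the null residuals are independent standard normals. By the separation condition they are also independent of the nonnull vector. Hence the null scores $W_i\sim N(\lambda_i z,1-\lambda_i^2)$, $i\in\Hnull$, are mutually independent and independent of $(W_j)_{j\notin\Hnull}$; the variance is positive because $\lambda_i<1$. Consequently, conditioning further on $\mathcal G$ (all magnitudes and the nonnull scores) makes the null signs mutually independent, and the sign of $W_i$ depends only on $|W_i|$. As in \eqref{eq:negative-factor-sign-probability}, with mean $\mu_i=\lambda_i z\le0$ and variance $s_i^2=1-\lambda_i^2$, the conditional probability of a positive sign at magnitude $u>0$ is
\[
 \Prb(W_i>0\mid |W_i|=u)=\{1+\exp(-2\mu_iu/s_i^2)\}^{-1}\le \tfrac12,
\]
because $\mu_i u\le0$. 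The hypotheses of Proposition~\ref{prop:conservative-signs} thus hold under the conditional law, and so $\E(\FDP\mid Z_0=z)\le q$.

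The main point requiring care is that Proposition~\ref{prop:conservative-signs}, and the leave-one-out Lemma~\ref{lem:leave-one-out-odds} behind it, is stated for an unconditional law. I would handle this by applying it to a regular conditional distribution given $Z_0=z$, which exists since everything is Polish-valued. The separation condition is what makes the nonnull scores carry no information about the null signs beyond $Z_0$. The sign-probability formula needs to hold only for almost every $u$, and null scores equal to $0$ occur with probability zero. Finally, the case $\lambda_i=0$ is harmless: that score is symmetric and independent of the rest, and its sign probability is exactly $1/2$.
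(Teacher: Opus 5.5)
Your proposal is correct and follows essentially the same route as the paper. You condition on $Z_0$, bound $\FDP\le1$ on $\{Z_0>0\}$, and apply Proposition~\ref{prop:conservative-signs} under the conditional law given $Z_0=z\le0$, where the Gaussian sign odds $\exp\{2u\lambda_i z/(1-\lambda_i^2)\}$ are at most one; your remarks on regular conditional distributions and on the separation condition spell out what the paper's proof uses implicitly.
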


\begin{proof}
Condition on $Z_0=z$.  For a null coordinate $i$, let $f_{i,z}$ denote the conditional density, which is that of $N(\lambda_i z,1-\lambda_i^2)$.  Then, for $u>0$,
\[
 \frac{\Prb(W_i>0\mid |W_i|=u,Z_0=z)}
      {\Prb(W_i<0\mid |W_i|=u,Z_0=z)}
 =\frac{f_{i,z}(u)}{f_{i,z}(-u)}
 =\exp\left\{\frac{2u\lambda_i z}{1-\lambda_i^2}\right\}.
\]
If $z\le0$, this ratio is at most one.  The conditional independence assumption implies that, after conditioning further on the null magnitudes and on all nonnull scores, the null signs remain independent and are biased toward the control side.  Proposition~\ref{prop:conservative-signs}, applied under the conditional law given $Z_0=z$, therefore gives
\[
 \E(\FDP\mid Z_0=z)\le q,
 \qquad z\le0.
\]
For $z>0$, the trivial bound $\FDP\le1$ applies.  Since $Z_0$ is symmetric,
\[
 \FDR
 \le q\Prb(Z_0\le0)+\Prb(Z_0>0)
 =\frac{1+q}{2}.
\]
\end{proof}

Theorems~\ref{thm:fixedrho} and \ref{thm:positive-loading-upper} locate the severity of failure in the equal-loading subclass $\lambda_i=\sqrt\rho$.  For each $m$, let $\mathcal M_m(\rho)$ be the class of joint score distributions satisfying the assumptions of Theorem~\ref{thm:positive-loading-upper}, with $\lambda_i=\sqrt\rho$ for every null $i$.  The all-null sequence in Theorem~\ref{thm:fixedrho} has FDR converging exactly to $1/2$, while Theorem~\ref{thm:positive-loading-upper} gives the upper bound for every member.  Hence
\[
 \frac12
 \le
 \liminf_{m\to\infty}
 \sup_{\mathbb P\in\mathcal M_m(\rho)}\FDR_m(\mathbb P)
 \le
 \limsup_{m\to\infty}
 \sup_{\mathbb P\in\mathcal M_m(\rho)}\FDR_m(\mathbb P)
 \le\frac{1+q}{2}.
\]

The proof also suggests a broader principle.  On any factor event where every null conditional mean is nonpositive, the conditional FDR is at most $q$; elsewhere it is at most one.
\end{revision}

\section{FDR arbitrarily close to one with identical null margins}
\label{sec:nearone}

\begin{revision}
The one-sign ceiling raises a natural question: can a different loading pattern make the failure substantially worse?  Yes.  We first use opposing Gaussian loadings to obtain FDR and power near one.  We then give a non-Gaussian construction in which higher-order sign alignment is invisible to every pairwise covariance.
\end{revision}

\begin{revision}
\begin{theorem}[Standardized Gaussian failure with alternatives and small correlations]
\label{thm:gaussian-nearone}
Fix $q\in(0,1)$, $\bar\rho\in(0,1)$, and $\eta>0$.  There are a constant $\mu>0$, a constant $\gamma\in(0,1)$, and a sequence of jointly Gaussian score vectors with the following properties:
\begin{enumerate}
 \item every null score has the exact $N(0,1)$ distribution;
 \item the fraction of nonnulls tends to $\gamma$, and every nonnull score has distribution $N(\mu,1)$;
 \item every covariance matrix is positive definite and
 \[
  \max_{i\ne j}|\operatorname{Corr}(W_i,W_j)|<\bar\rho;
 \]
 \item if $\FDR_m$ and $\Power_m$ denote the threshold's FDR and power, then
 \[
  \liminf_{m\to\infty}\FDR_m>1-\eta,
  \qquad
  \liminf_{m\to\infty}\Power_m>1-\eta.
 \]
\end{enumerate}
Moreover, the transformation $P_i=1-\Phi(W_i)$ gives exactly uniform null $p$-values, stochastically smaller nonnull $p$-values, and the same rejection sets under the mirror threshold.
\end{theorem}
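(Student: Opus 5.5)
I would build the counterexample from a single Gaussian factor $Z_0$, using \emph{two null groups with opposing and unequal loadings}, together with a small fraction of strong alternatives. Concretely, I would take null group $A$ of size $n_A\approx\alpha m$ with $W_i=\lambda_AZ_0+\sqrt{1-\lambda_A^2}\,Z_i$, and null group $B$ of size $n_B\approx\beta m$ with $W_i=-\lambda_BZ_0+\sqrt{1-\lambda_B^2}\,Z_i$. I would add $n_1\approx\gamma m$ independent nonnulls $W_j=\mu+Z_j'$. Here $0<\lambda_B<\lambda_A<\sqrt{\bar\rho}$, $\beta/\alpha$ is large, and $\gamma$ is small. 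Items 1--3 are then immediate. The marginals are exact, and all correlations are $\lambda_A^2$, $\lambda_B^2$, $\lambda_A\lambda_B$ or $0$, hence below $\bar\rho$. Positive definiteness holds because the covariance is a rank-one term plus a positive diagonal. The final sentence about $P_i=1-\Phi(W_i)$ follows from Proposition~\ref{prop:equivalence} and the remark after it. Nonnull $p$-values are stochastically smaller because $\mu>0$.

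For item 4, I would first condition on $Z_0=z$ and restrict to a good factor set $\{\delta\le|z|\le K\}$, which has probability above $1-\eta/4$. Given $z$, all scores are independent, and the counts obey the law of large numbers uniformly in $t$ (Glivenko--Cantelli, as in the proof of Theorem~\ref{thm:fixedrho}). Writing $a_G(t),b_G(t)$ for the conditional upper and lower tails of group $G$, the aim is to exhibit, for each such $z$, a fixed threshold $t_z$ satisfying two conditions. First, the population ratio $\{\sum_G n_Gb_G(t_z)\}/\{\sum_Gn_Ga_G(t_z)\}$ is below $q$. Second, $t_z<\mu-C$ for a constant $C$ with $\Prb(N(0,1)\le -C)<\eta/4$. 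Rejection then occurs with conditional probability tending to one. The selected set contains every score at least $t_z$, which includes nearly all alternatives, so power exceeds $1-\eta$. Because $\gamma\to0$ relative to the null positive count $\sum n_Ga_G(t_z)m^{-1}$, which is bounded below on the good set, the FDP is close to one. Bounded convergence over $Z_0$ then finishes the argument.

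The main obstacle is finding $t_z$ \emph{for both signs of} $z$. A single sign-aligned group handles only $z$ of one sign, as Theorem~\ref{thm:positive-loading-upper} shows. For $z>0$, I would let group $A$ produce the discoveries. At large $t$, its upper tail beats group $B$'s lower tail, because $\lambda_A>\lambda_B$ and Gaussian tails with a larger mean shift dominate; so I would take $t_z$ large. For $z<0$, I would use the large group $B$, which is now shifted upward by $\lambda_B|z|$. Here I would look for an intermediate window. The threshold must be large enough that $B$'s own tail ratio $b_B/a_B\approx\exp\{-2\lambda_B|z|t/(1-\lambda_B^2)\}$ is below $q/2$. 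It must also be small enough that $\beta\,a_B(t)$ exceeds $(2/q)\,\alpha\,b_A(t)$, which is possible by taking $\beta/\alpha$ huge since $A$'s downward tail decays only slightly faster. I expect the delicate part to be choosing $\lambda_A,\lambda_B,\beta/\alpha$ (uniformly over $\delta\le|z|\le K$) so that both windows are nonempty. After that, $\mu$ is chosen above $\sup_{|z|\in[\delta,K]}t_z+C$, and $\gamma$ is taken small. If the explicit Mills-ratio bookkeeping fails for a fixed pair of groups, I would fall back to a finite ladder of groups with loadings of alternating sign and geometrically separated sizes, so that some group supplies a passing window for every $z$ in the compact good set.
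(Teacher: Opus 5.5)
There is a genuine gap, and it sits at the step you flagged as delicate. Your architecture matches the paper's: one factor, two opposing null groups, a small fraction of strong alternatives, a compact factor range, one fixed passing threshold per $z$, then $\gamma$ small and $\mu$ large. Items 1--3 are fine. The problem is that your tail comparisons track only mean shifts and ignore that the two groups have different residual variances. With your pairing of sizes to loadings, the case $z>0$ then fails.

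Because $\lambda_A>\lambda_B$, group $A$ has the \emph{smaller} residual variance, $s_A^2=1-\lambda_A^2<s_B^2=1-\lambda_B^2$. In the far tail the $t^2/(2s^2)$ term dominates any mean shift, so $b_B(t)/a_A(t)\to\infty$. This contradicts your claim that $A$'s upper tail beats $B$'s lower tail at large $t$.

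In fact no threshold works for moderate $z>0$. There $B$ is shifted down, so $a_B(t)\le b_B(t)$. Also, $a_A(t)$ and $b_B(t)$ are the upper tails at $t$ of $N(\lambda_Az,s_A^2)$ and $N(\lambda_Bz,s_B^2)$, so their ratio is at most the supremum $R(z)$ of the density ratio. Hence, for every $t$,
\[
 \frac{\alpha b_A(t)+\beta b_B(t)}{\alpha a_A(t)+\beta a_B(t)}
 \ge\frac{\beta b_B(t)}{\alpha a_A(t)+\beta b_B(t)}
 \ge\Bigl\{1+\frac{\alpha}{\beta}R(z)\Bigr\}^{-1},
 \qquad
 R(z)=\frac{s_B}{s_A}\exp\Bigl\{\frac{(\lambda_A-\lambda_B)z^2}{2(\lambda_A+\lambda_B)}\Bigr\}.
\]
So no admissible $t_z$ exists whenever $\beta/\alpha\ge\{q/(1-q)\}R(z)$. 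Since $R$ increases in $|z|$, a huge $\beta/\alpha$ rules out the whole positive half $[\delta,K]$ of your good factor set. Even $\beta/\alpha\ge\{q/(1-q)\}R(\delta)\approx\{q/(1-q)\}s_B/s_A$ already rules out the values near $\delta$. The alternatives cannot rescue this: in the population limit, at any threshold that passes, the null fraction among discoveries is at most $q\{1+(\alpha/\beta)R(z)\}$, far from one.

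The missing idea is to make the variance effect work for you, which forces the opposite pairing. The large group must carry the larger loading, and the small group the smaller loading and hence the larger residual variance. In your notation, take $\beta/\alpha$ \emph{small}.
\begin{itemize}
 \item For $z\in[\delta,K]$, fix $t_1$ with $\sup_z b_A(t_1)/a_A(t_1)<q/8$ (equal variances, mean shift). Then take $\beta/\alpha$ so small that $\beta/\{\alpha\inf_z a_A(t_1)\}<q/8$, bounding $B$'s controls crudely by its size. A threshold sent to infinity would fail here, because $b_B$ eventually beats $a_A$; this is why the order of choices matters.
 \item For $z\in[-K,-\delta]$, send $t\to\infty$. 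Then $b_B(t)/a_B(t)\to0$ by the mean shift, and $b_A(t)/a_B(t)\to0$ uniformly because $s_B>s_A$. So a large $t_2$ overcomes the factor $\alpha/\beta$.
\end{itemize}
This is exactly the paper's order: moderate threshold, then small-block proportion, then large threshold, then $\gamma$, then $\mu$.

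Under your parameters the $z<0$ half was actually the easy one. Large $t$ already works there, so no ``intermediate window'' or huge $\beta/\alpha$ is needed. The other half was left without a mechanism. Your fallback ladder does not identify one, so it is not yet an argument.
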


\emph{Proof idea.}
For $0<c<a<1$, let $Z,\varepsilon_1,\varepsilon_2,\ldots$ be independent standard normal variables.  Use two null blocks
\[
 W_i=-aZ+\sqrt{1-a^2}\,\varepsilon_i
 \quad\text{and}\quad
 W_i=cZ+\sqrt{1-c^2}\,\varepsilon_i,
\]
and independent alternatives $W_i=\mu+\varepsilon_i$.  When $Z<0$, the first block's mean shift produces the needed separation, and a small second-block proportion keeps its controls negligible.  When $Z>0$, the second block favors the discovery side, and its larger residual variance makes that favorable tail decay more slowly than the relevant control tails.  We then choose the block and alternative proportions so that the empirical ratio passes but most selected variables are null.  Taking $\mu$ large gives power near one at the same time.

\begin{deferredproof}{B}{Proof of Theorem~\ref{thm:gaussian-nearone}}{app:proof-gaussian-nearone}
The parameters are chosen in stages.  We first isolate a high-probability range of the common factor, then obtain tail separation on its negative and positive parts, and only afterward choose the alternative fraction and mean.  This order avoids any circular dependence among the constants.

\emph{Factor range and loadings.}
It is enough to consider $\eta\in(0,1)$.  Choose $\beta>0$ so small that
\begin{equation}
 (1-\beta)^2>1-\eta.
 \label{eq:beta-nearone}
\end{equation}
Let $Z$ be standard normal, and choose $0<\delta<M<\infty$ such that
\begin{equation}
 \Prb(\delta\le |Z|\le M)>1-\beta.
 \label{eq:factor-compact-event}
\end{equation}
Choose
\[
 0<c<a<\min\{1,\sqrt{\bar\rho}\},
 \qquad
 s_a=\sqrt{1-a^2},
 \qquad
 s_c=\sqrt{1-c^2}.
\]
The two null blocks will have factor loadings $-a$ and $c$.  Conditional on $Z=z$, define their positive- and negative-tail probabilities by
\begin{align*}
 A_z^+(t)&=\barPhi\left(\frac{t+az}{s_a}\right),
 &A_z^-(t)&=\barPhi\left(\frac{t-az}{s_a}\right),\\
 B_z^+(t)&=\barPhi\left(\frac{t-cz}{s_c}\right),
 &B_z^-(t)&=\barPhi\left(\frac{t+cz}{s_c}\right).
\end{align*}

\emph{Tail separation.}
We first choose a mixing proportion $\pi\in(0,1)$ for the second null block and two deterministic thresholds.  Uniformly for $z\in[-M,-\delta]$,
\begin{equation}
 \frac{A_z^-(t)}{A_z^+(t)}\longrightarrow0
 \qquad(t\to\infty).
 \label{eq:negative-factor-tail-ratio}
\end{equation}
This follows directly from standard Gaussian tail bounds (Mills-ratio bounds): the two standardized arguments have the same variance scale, while their squared difference grows at least linearly in $t\delta$.  Choose $t_->0$ so that the supremum in \eqref{eq:negative-factor-tail-ratio} is smaller than $q/8$, and put
\[
 d_-:=\inf_{-M\le z\le-\delta}A_z^+(t_-)>0.
\]
Now choose $\pi>0$ sufficiently small that
\begin{equation}
 \frac{\pi}{(1-\pi)d_-}<\frac q8.
 \label{eq:choose-pi}
\end{equation}

For $z\in[\delta,M]$, the second block eventually dominates both sources of controls.  More precisely, because $s_c>s_a$, the Mills bounds give, uniformly on this compact interval,
\begin{equation}
 \frac{B_z^-(t)}{B_z^+(t)}\longrightarrow0,
 \qquad
 \frac{A_z^-(t)}{B_z^+(t)}\longrightarrow0
 \qquad(t\to\infty).
 \label{eq:positive-factor-tail-ratios}
\end{equation}
The first limit uses the positive mean $cz\ge c\delta$ of the second block.  In the second, the larger residual variance $s_c^2$ makes the positive tail of the second block decay more slowly than the negative tail of the first block.  Choose $t_+>0$ so large that
\[
 \sup_{\delta\le z\le M}\frac{B_z^-(t_+)}{B_z^+(t_+)}<\frac q8,
 \qquad
 \sup_{\delta\le z\le M}\frac{A_z^-(t_+)}{B_z^+(t_+)}
 <\frac{q\pi}{8(1-\pi)}.
\]

\emph{Combine the null blocks.}
For a null drawn from the two blocks in proportions $1-\pi$ and $\pi$, write
\[
 D_z(t)=(1-\pi)A_z^+(t)+\pi B_z^+(t),
 \qquad
 C_z(t)=(1-\pi)A_z^-(t)+\pi B_z^-(t).
\]
The choices above imply
\begin{equation}
 \frac{C_z(t_-)}{D_z(t_-)}<\frac q4
 \quad(-M\le z\le-\delta),
 \qquad
 \frac{C_z(t_+)}{D_z(t_+)}<\frac q4
 \quad(\delta\le z\le M).
 \label{eq:null-tail-ratio-both-signs}
\end{equation}
Indeed, on the negative interval,
\[
 \frac{C_z(t_-)}{D_z(t_-)}
 \le \frac{A_z^-(t_-)}{A_z^+(t_-)}
     +\frac{\pi}{(1-\pi)A_z^+(t_-)}<\frac q4,
\]
and on the positive interval,
\[
 \frac{C_z(t_+)}{D_z(t_+)}
 \le \frac{B_z^-(t_+)}{B_z^+(t_+)}
     +\frac{1-\pi}{\pi}\frac{A_z^-(t_+)}{B_z^+(t_+)}<\frac q4.
\]
By continuity and compactness,
\begin{equation}
 d_*:=\min\left
 \{
  \inf_{-M\le z\le-\delta}D_z(t_-),
  \inf_{\delta\le z\le M}D_z(t_+)
 \right\}>0.
 \label{eq:d-star}
\end{equation}

\emph{Add alternatives.}
Choose a positive alternative fraction $\gamma$ sufficiently small that
\begin{equation}
 \frac{\gamma}{(1-\gamma)d_*}
 <\min\left\{\frac q4,\frac{\beta}{1-\beta}\right\}.
 \label{eq:choose-gamma}
\end{equation}
Finally, with $T=\max(t_-,t_+)$, choose $\mu$ so large that
\begin{equation}
 \barPhi(T-\mu)>1-\beta.
 \label{eq:choose-mu}
\end{equation}

\emph{Finite vectors and a passing cutoff.}
Let $n_1^{(m)}=\lfloor\gamma m\rfloor$ be the number of alternatives and $n_0^{(m)}=m-n_1^{(m)}$ the number of nulls.  Split the nulls into blocks $A_m$ and $B_m$ with $|B_m|/n_0^{(m)}\to\pi$.  Using mutually independent standard normal variables $Z$, $(\varepsilon_i)$, set
\begin{align*}
 W_i&=-aZ+s_a\varepsilon_i,
 &&i\in A_m,\\
 W_i&= cZ+s_c\varepsilon_i,
 &&i\in B_m,\\
 W_i&=\mu+\varepsilon_i,
 &&i\in\Halt.
\end{align*}
Every null margin is $N(0,1)$ and every alternative margin is $N(\mu,1)$.  The nonzero correlations are $a^2$ within $A_m$, $c^2$ within $B_m$, and $-ac$ across the two null blocks.  Their absolute values are all smaller than $\bar\rho$.  The independent residual variances are strictly positive, so the covariance matrix is positive definite.

Fix $z$ in \eqref{eq:factor-compact-event}, taking $t_z=t_-$ for $z<0$ and $t_z=t_+$ for $z>0$.  Conditional on $Z=z$, the scores are independent, and the law of large numbers and \eqref{eq:null-tail-ratio-both-signs}--\eqref{eq:choose-gamma} bound the limiting control-to-discovery count ratio at $t_z$ by
\[
 \frac{C_z(t_z)}{D_z(t_z)}
 +\frac{\gamma}{(1-\gamma)D_z(t_z)}
 <\frac q2<q.
\]
The added one is negligible because the discovery count has order $m$.  Thus, with conditional probability tending to one, at least one positive score is at least $t_z$.  Let $t'_z$ be the smallest such score.  Then $t'_z\in\mathcal T$, $N_+(t'_z)=N_+(t_z)$, and $N_-(t'_z)\le N_-(t_z)$, so $t'_z$ passes and the selected threshold is no larger than $t'_z$.

\emph{FDR and power.}
On that event the selected set contains every null score exceeding $t_z$, while it contains at most all $n_1^{(m)}$ alternatives.  Therefore
\[
 \FDP
 \ge
 \frac{\#\{i\in\Hnull:W_i\ge t_z\}}
      {\#\{i\in\Hnull:W_i\ge t_z\}+n_1^{(m)}}.
\]
The conditional limit of the right-hand side is at least
\[
 \frac{(1-\gamma)d_*}{(1-\gamma)d_*+\gamma}>1-\beta
\]
by \eqref{eq:choose-gamma}.  The selected set also contains every alternative score exceeding $t_z$, so its power has conditional lower limit at least
\[
 \Prb\{N(\mu,1)\ge t_z\}
 \ge\barPhi(T-\mu)>1-\beta
\]
by \eqref{eq:choose-mu}.  Fatou's lemma, \eqref{eq:factor-compact-event}, and \eqref{eq:beta-nearone} now give
\[
 \liminf_{m\to\infty}\FDR_m
 \ge(1-\beta)\Prb(\delta\le|Z|\le M)
 >(1-\beta)^2>1-\eta,
\]
and the same calculation gives the power bound.

For the last assertion, $P_i=1-\Phi(W_i)$ is uniform under every null and is stochastically smaller under $N(\mu,1)$.  Also, for $t>0$ and $u=1-\Phi(t)$,
\[
 P_i\le u\Longleftrightarrow W_i\ge t,
 \qquad
 P_i\ge1-u\Longleftrightarrow W_i\le-t.
\]
Thus the score and $p$-value thresholds have the same rejection sets.
\end{deferredproof}

\begin{remark}
\label{rem:gaussian-nearone-dimension}
The proof separately bounds several contributions to the control-to-discovery ratio by $q/8$.  This can force the second null block and the alternative fraction to be extremely small, and the required dimension can be astronomical when $\bar\rho$ is small.  Figure~\ref{fig:numerics}(d) instead illustrates the mechanism at a moderate dimension and a larger maximum correlation.

Small pairwise correlations do not imply weak dependence in every sense.  At fixed block proportions, the rank-one factor component has a leading eigenvalue of order $m$.  The theorem controls individual pairwise correlations, not dependence that accumulates across many coordinates.
\end{remark}
\end{revision}

\rev{The Gaussian construction above still has a common factor with nonzero pairwise correlations; the next result shows that near-total FDR failure can persist even when every pairwise covariance is zero.}

\begin{theorem}[An exchangeable, pairwise-uncorrelated worst case]
\label{thm:nearone}
For every $q\in(0,1)$ and every $\eta>0$, there are $m$ all-null scores $W_1,\ldots,W_m$ with the following properties:
\begin{enumerate}
 \item the vector is exchangeable and has a strictly positive joint density on $\mathbb R^m$;
 \item all coordinates have the same continuous distribution, symmetric about zero;
 \item $\operatorname{Cov}(W_i,W_j)=0$ for every $i\ne j$;
 \item the knockoff+ threshold has $\FDR>1-\eta$.
\end{enumerate}
Moreover, if $F$ is the common marginal distribution function and $P_i=1-F(W_i)$, then the $P_i$ are exchangeable and exactly uniform, their joint density is positive on $(0,1)^m$, and the mirror threshold has the same FDR.
\end{theorem}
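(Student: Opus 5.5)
The plan is to exhibit an explicit exchangeable mixture of a few latent configurations. One configuration, carrying probability close to one, puts a block of large positive scores above every control. The other configurations exist only to restore marginal symmetry, remove pairwise covariance, and give full support. The trigger in Lemma~\ref{lem:trigger} cannot be applied to all coordinates, because marginal symmetry forces $\Prb(W_i>0)=1/2$. The imbalance must therefore be placed in the \emph{large magnitudes} only: symmetric mass at large negative values is concentrated on a rare configuration, and the remaining negative mass sits at small magnitudes, where it never enters the control count at the relevant cutoff.

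\emph{The three states.} Fix $\eta'\in(0,\eta)$, an integer $n\ge 1/q$, and $m$ with $n/m=\eta'/(1-\eta')$, up to rounding absorbed by a small adjustment of probabilities. Let $T>1$. The latent states are as follows.

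In state $A$, with probability about $1-\eta'$, a uniformly random subset of $n$ coordinates takes i.i.d.\ values with a smooth density on $[T,T+1]$. The other $m-n$ coordinates are i.i.d.\ with a smooth symmetric density on $(-1,1)$.

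In state $B$, with probability about $\eta'$, all $m$ coordinates are i.i.d.\ with the mirrored density on $[-T-1,-T]$.

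The balance $p_A n=p_B m$ makes the marginal mass near $+T$ equal to the mass near $-T$. The small-magnitude part is symmetric by construction, so every margin is a common continuous law symmetric about zero. Exchangeability holds because the subset is uniform and all states are permutation invariant.

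\emph{Rejection and FDR.} In state $A$, at the threshold equal to the smallest large score, we have $N_+\ge n$ and $N_-=0$. The ratio is therefore at most $1/n\le q$, and the procedure rejects. Since everything is null, $\FDP=1$ on state $A$, so $\FDR\ge p_A\approx 1-\eta'$.

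\emph{Zero covariance.} Means vanish by symmetry. Cross terms involving a small coordinate with a fair independent sign vanish. In states $A$ and $B$, however, the products of two large coordinates are positive, so $\E(W_iW_j)>0$. To cancel this, I add a state $C$ with small probability $\kappa$. In it, a uniformly random half of the coordinates is near $+K$ and the other half near $-K$.

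State $C$ is invariant under global sign flip, so it preserves marginal symmetry, and it is exchangeable. It contributes roughly $-\kappa K^2/(m-1)$ to $\E(W_iW_j)$. By continuity in $K$ and the intermediate value theorem, I choose $K$ so that the covariance is exactly zero; the margins stay identical across pairs by exchangeability.

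\emph{Full support.} Finally, with a tiny probability $\kappa'$, I replace the vector by i.i.d.\ standard normals. This component is symmetric, exchangeable and uncorrelated, and it gives a strictly positive density on $\mathbb R^m$. I then retune $K$ after adding it.

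The total weight moved away from state $A$ is $\eta'+\kappa+\kappa'<\eta$ for small $\kappa,\kappa'$. Once $K$ is large, $\kappa$ can be made small, since its needed size scales like $1/K^2$. This keeps $\FDR>1-\eta$.

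\emph{Main obstacle and $p$-values.} The delicate step is the simultaneous bookkeeping. Adding state $C$ and the Gaussian component perturbs the marginal law, so the balance $p_An=p_Bm$ must be imposed on the \emph{final} mixture. This is easiest if every added component is itself sign-symmetric and exchangeable; then symmetry of the margins reduces to the $A$--$B$ balance alone. Covariance zero is then a single scalar equation solved in $K$.

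For the $p$-value statement, I set $P_i=1-F(W_i)$ with $F$ continuous and symmetric, so $F(-w)=1-F(w)$. This gives exact uniform margins, exchangeability, and a positive density on $(0,1)^m$, since $F$ is a smooth strictly increasing bijection. The remark following Proposition~\ref{prop:equivalence} shows that the mirror threshold has the same rejection sets, hence the same FDR.
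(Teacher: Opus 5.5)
Your construction is correct, but it takes a genuinely different route from the paper's.

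\textbf{The paper's route.} The paper uses a one-factor model with random mean-zero loadings, $W_i=L_iZ+\sigma\varepsilon_i$, where $L_i\in\{a,-b\}$, $\pi a=(1-\pi)b$ and $\pi<q/(1+q)$. Zero covariance then follows automatically from $\E L_i=0$, and symmetry and full support come from the conditional Gaussian structure. Rejection occurs for \emph{both} signs of the factor:
\begin{itemize}
 \item for $z>0$, the $K\approx\pi m$ coordinates with loading $a$ form a control-free top block, which passes because $1/K\le q$;
 \item for $z<0$, the $m-K$ coordinates with loading $-b$ become the discovery side against only $K$ controls, which passes because $(1+K)/(m-K)\le q$.
\end{itemize}
The proof then needs the law of large numbers for $K$, a small-noise limit $\sigma\downarrow0$, and Fatou's lemma.

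\textbf{Your route.} You use a single rejecting state and put the compensating negative mass either in a rare mirrored state or at magnitudes below every relevant cutoff. The control-count requirement $(1+s)/n<q$ of Lemma~\ref{lem:trigger} is thus replaced by magnitude separation.

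\textbf{What each buys.} Your route gives an explicit, non-asymptotic bound $\FDR\ge p_A=(1-\kappa-\kappa')/(1+n/m)$ with $n=\lceil 1/q\rceil$, and no limiting argument in $m$ or in a noise scale. The cost is an ad hoc assembly:
\begin{itemize}
 \item the positive cross-moments from states $A$ and $B$ must be cancelled by a separately tuned state $C$;
 \item state $C$ is sign-flip invariant only if $m$ is even, or if you symmetrize over splits of sizes $\lfloor m/2\rfloor$ and $\lceil m/2\rceil$;
 \item full support has to be added through a separate Gaussian component.
\end{itemize}
The tuning equation in $K$ can in fact be solved explicitly rather than by the intermediate value theorem. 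The paper's route gets uncorrelatedness structurally. It also exhibits the failure inside the same factor-model family as Theorems~\ref{thm:fixedrho} and~\ref{thm:gaussian-nearone}, and this is the model simulated in Figure~\ref{fig:numerics}(c). That makes it a natural mean-zero-loading continuation of the opposing-loading mechanism, rather than an isolated mixture example.
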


\begin{revision}
\emph{Proof idea.}
Choose $0<\pi<q/(1+q)$, let $L_i$ equal $a>0$ with probability $\pi$ and $-b<0$ otherwise, and set $b=\pi a/(1-\pi)$.  Let $Z,\varepsilon_1,\ldots,\varepsilon_m$ be independent standard normal variables and, for $\sigma>0$, define
\begin{equation}
 W_i=L_iZ+\sigma\varepsilon_i,
 \qquad i=1,\ldots,m.
 \label{eq:exchangeable-factor}
\end{equation}
Then $\E L_i=0$, which makes distinct coordinates uncorrelated, although their signs remain strongly aligned through $Z$.  If $K=\#\{i:L_i=a\}$, the law of large numbers makes both $1/K\le q$ and $(1+K)/(m-K)\le q$ overwhelmingly likely for large $m$.  These two inequalities make the threshold pass for the two signs of $Z$, respectively, as $\sigma\downarrow0$.
\end{revision}

\begin{deferredlegacyproof}{B}{Proof of Theorem~\ref{thm:nearone}}{app:proof-exchangeable-nearone}
Choose
\[
 0<\pi<\frac{q}{1+q}
\]
and a constant $a>0$.  Set $b=\pi a/(1-\pi)$, so $0<b<a$.  Let $L_1,\ldots,L_m$ be independent with
\[
 L_i=
 \begin{cases}
  a,&\text{with probability }\pi,\\
  -b,&\text{with probability }1-\pi,
 \end{cases}
\]
and let $Z,\varepsilon_1,\ldots,\varepsilon_m$ be independent standard normal variables, independent of the $L_i$.  \rev{For $\sigma>0$, use the scores in \eqref{eq:exchangeable-factor}.}

The vector is exchangeable.  Conditional on $L=(L_1,\ldots,L_m)$, it is Gaussian with covariance $LL^{\mathsf T}+\sigma^2I_m$, which is positive definite.  Its unconditional density, a finite mixture of positive Gaussian densities, is therefore positive everywhere.  Each coordinate has the same symmetric mixture law
\[
 \mathcal L(W_i)
 =\pi N(0,a^2+\sigma^2)+(1-\pi)N(0,b^2+\sigma^2),
\]
whose distribution function we denote by $F_\sigma$.
Also, $\E L_i=\pi a-(1-\pi)b=0$, and for $i\ne j$,
\[
 \operatorname{Cov}(W_i,W_j)
 =\E(L_iL_jZ^2)
 =(\E L_i)(\E L_j)=0.
\]

Let $K=\#\{i:L_i=a\}$ and define
\[
 G_m=
 \left\{
  \frac1K\le q,
  \quad
  \frac{1+K}{m-K}\le q
 \right\},
\]
with the first inequality interpreted as false when $K=0$ and the second as false when $K=m$.  By the law of large numbers, $K/m\to\pi$, while
\[
 \frac{\pi}{1-\pi}<q.
\]
Consequently, $\Prb(G_m)\to1$ as $m\to\infty$.

Fix $m$, a realization of $L$ in $G_m$, and $z>0$.  As $\sigma\downarrow0$, the scores with $L_i=a$ approach $az>0$, while those with $L_i=-b$ approach $-bz<0$.  Since $a>b$, with conditional probability tending to one the $K$ positive scores have larger magnitudes than all negative scores.  At the smallest of those $K$ positive scores, $N_+=K$ and $N_-=0$, so the first inequality defining $G_m$ makes the threshold pass.

Now fix $z<0$.  The $K$ scores with $L_i=a$ approach $-a|z|$, while the remaining $m-K$ scores approach $b|z|$.  Again $a>b$.  With conditional probability tending to one, at the smallest positive score we have $N_+=m-K$ and $N_-=K$, so the second inequality defining $G_m$ makes the threshold pass.

Thus, for every realization $L\in G_m$ and every $z\ne0$, the conditional probability of a rejection tends to one as $\sigma\downarrow0$.  \rev{Let $\FDR_{m,\sigma}$ denote the FDR for the corresponding values of $m$ and $\sigma$.}  Fatou's lemma and the global-null identity \eqref{eq:global-null} give
\[
 \liminf_{\sigma\downarrow0}\FDR_{m,\sigma}
 \ge \Prb(G_m).
\]
Choose $m$ so that $\Prb(G_m)>1-\eta/2$, and then choose $\sigma>0$ sufficiently small to obtain $\FDR_{m,\sigma}>1-\eta$.

The common marginal density is everywhere positive, so $F_\sigma$ is continuous and strictly increasing.  Therefore $P_i=1-F_\sigma(W_i)$ is exactly uniform.  Symmetry gives
\[
 P_i\le 1-F_\sigma(t)\Longleftrightarrow W_i\ge t,
 \qquad
 P_i\ge F_\sigma(t)\Longleftrightarrow W_i\le-t,
\]
so Proposition~\ref{prop:equivalence} applies after a monotone change of threshold.  Exchangeability and positivity of the joint density are preserved by the coordinatewise transformation.
\end{deferredlegacyproof}

\begin{revision}
This theorem concerns a different limitation from PRDS; Proposition~\ref{prop:nongaussian} and Corollary~\ref{cor:prds} cover that condition.  Here the dependence is hidden beyond pairwise covariance: exchangeability and identical null distributions still allow strong higher-order sign alignment.
\end{revision}

\begin{revision}
\section{Limits and calibrated repairs}
\label{sec:repair}

The counterexamples leave two questions.  Can the failure be repaired by changing only the two tail counts?  If not, what additional information restores a finite-sample guarantee?  We first rule out count-only corrections, then quantify departures from conditional sign symmetry, and finally specialize those bounds to a specified Gaussian joint model.  The last subsection explains why covariance is useful only through such a model.

\subsection{No count-only repair}
\end{revision}

\begin{revision}
A natural response is to make the mirror estimate more conservative.  For example, one might replace the plus one by a larger constant.  More generally, one could use any deterministic function of the two current counts.  The next result shows that no such monotone two-count rule can be both useful and uniformly valid over the PRDS class in Proposition~\ref{prop:nongaussian} when $q<1/2$.
\end{revision}

For each $m$, let
\[
 \psi_m:\{0,\ldots,m\}^2\longrightarrow[0,\infty]
\]
be deterministic, nonincreasing in its first argument and nondecreasing in its second.  Think of $\psi_m(r,\ell)$ as an estimated FDP based only on $r$ positive scores and $\ell$ negative scores.  Consider the threshold below, with $\widehat t_\psi=\infty$ when the set is empty and rejection set $\{i:W_i\ge\widehat t_\psi\}$:
\begin{equation}
\widehat t_\psi
=\min\left\{t\in\mathcal T:
       \psi_m\{N_+(t),N_-(t)\}\le q\right\}.
\label{eq:general-count-rule}
\end{equation}
The monotonicity directions encode the usual logic of a mirror estimate: adding positive scores should not increase the estimated FDP, whereas adding negative scores should not decrease it.  The ordinary knockoff+ threshold has $\psi_m(r,\ell)=(1+\ell)/(r\vee1)$.  Additive and multiplicative two-count modifications are included as special cases.

\begin{proposition}[No nontrivial monotone count-only correction]
\label{prop:no-repair}
Fix $q\in(0,1/2)$ and $m\ge1$.
\begin{enumerate}
 \item If $\psi_m(m,0)\le q$, then for every $c<1/2$ there is an all-null vector of exactly uniform PRDS $p$-values, with a strictly positive joint density on $(0,1)^m$, for which the rule \eqref{eq:general-count-rule} has $\FDR>c$.
 \item If $\psi_m(m,0)>q$, then the rule \eqref{eq:general-count-rule} never rejects, for any input vector.
\end{enumerate}
Consequently, every deterministic monotone rule based only on the two tail counts is either invalid over this class or identically powerless.
\end{proposition}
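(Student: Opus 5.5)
The plan is to treat the two cases separately, using in both only the monotonicity of $\psi_m$ and the latent-state construction of Proposition~\ref{prop:nongaussian}.

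\emph{Case 2} is a direct monotonicity argument. At any candidate threshold $t\in\mathcal T$, the counts satisfy $N_+(t)\le m$ and $N_-(t)\ge0$. Since $\psi_m$ is nonincreasing in its first argument and nondecreasing in its second,
\[
 \psi_m\{N_+(t),N_-(t)\}\ge\psi_m\{m,N_-(t)\}\ge\psi_m(m,0)>q .
\]
Hence no $t$ is admissible, $\widehat t_\psi=\infty$, and the rule never rejects, whatever the input vector.

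\emph{Case 1} uses the latent-state model of Proposition~\ref{prop:nongaussian} with this fixed $m$ and a small $\varepsilon$. The margins are uniform, the vector is PRDS, the joint density is strictly positive on $(0,1)^m$, and ties occur with probability zero. The key point is that on the event $\{P_i<1/2\text{ for all } i\}$ every score $W_i=1/2-P_i$ is positive. Take $t=\min_i|W_i|\in\mathcal T$. Then $N_+(t)=m$ and $N_-(t)=0$, so $\psi_m\{N_+(t),N_-(t)\}=\psi_m(m,0)\le q$ and this $t$ is admissible. The rule therefore rejects, and under the global null the FDP equals one. Lemma~\ref{lem:trigger} cannot be quoted here because it is specific to knockoff+, but its argument transfers verbatim: we only need that the full set passes when all signs are positive, which is exactly the hypothesis $\psi_m(m,0)\le q$. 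By \eqref{eq:global-null},
\[
 \FDR\ge\tfrac12\{(1-\varepsilon)^m+\varepsilon^m\},
\]
and the right-hand side tends to $1/2$ as $\varepsilon\downarrow0$ with $m$ fixed. Given $c<1/2$, choose $\varepsilon$ small enough that the bound exceeds $c$. Since $q<1/2$, taking $c=q$ shows invalidity, which together with Case 2 gives the concluding dichotomy.

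The main obstacle is minor: Case 1 must be stated for the rule \eqref{eq:general-count-rule} rather than for knockoff+, so the trigger argument has to be checked to use only the all-positive configuration. A secondary point is that the boundary case $\psi_m(m,0)=q$ falls under Case 1, which is consistent because admissibility in \eqref{eq:general-count-rule} uses $\le q$.
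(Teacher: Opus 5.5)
Your proposal is correct and follows the paper's proof essentially step for step. Case~2 is the same monotonicity bound $\psi_m\{N_+(t),N_-(t)\}\ge\psi_m(m,0)>q$. Case~1 applies the rule to $W_i=1/2-P_i$ from the latent-state construction of Proposition~\ref{prop:nongaussian}. It rejects at $t=\min_iW_i$ on the all-positive event, where the counts are $(m,0)$, and then lets $\varepsilon\downarrow0$ in $\frac12\{(1-\varepsilon)^m+\varepsilon^m\}$. As you anticipated, the paper does not quote Lemma~\ref{lem:trigger} but redoes that trigger argument directly for the general rule.
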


\begin{proof}
Suppose first that $\psi_m(m,0)\le q$.  Apply the rule to the scores $W_i=1/2-P_i$ from the construction in Proposition~\ref{prop:nongaussian}.  On an outcome where all $m$ scores are positive, take $t=\min_iW_i$.  Then $N_+(t)=m$ and $N_-(t)=0$, so the rule rejects.  In the full-support PRDS construction of Proposition~\ref{prop:nongaussian}, the probability of this event is
\[
 \frac12\{(1-\varepsilon)^m+\varepsilon^m\},
\]
which tends to $1/2$ as $\varepsilon\downarrow0$.  Under the global null, the FDR is the rejection probability, proving the first assertion.

Now suppose $\psi_m(m,0)>q$.  At every threshold, $N_+(t)\le m$ and $N_-(t)\ge0$.  Monotonicity therefore gives
\[
 \psi_m\{N_+(t),N_-(t)\}
 \ge \psi_m(m,0)>q.
\]
Thus no threshold can pass.
\end{proof}

\begin{revision}
\begin{corollary}[The count-only impossibility persists with alternatives]
\label{cor:no-repair-mixed}
Fix $q\in(0,1/2)$ and $\pi_0\in(2q,1)$.  Consider any sequence of deterministic monotone count rules of the form \eqref{eq:general-count-rule}.
\begin{enumerate}
 \item If $\psi_m(m,0)>q$ for all sufficiently large $m$, then the rules are eventually powerless on every input vector.
 \item Otherwise, there are an infinite subsequence $\mathcal M\subseteq\mathbb N$ and full-support PRDS models indexed by $m\in\mathcal M$, with $n_0/m\to\pi_0$ along $\mathcal M$ and a positive fraction of genuine alternatives, such that
 \[
  \liminf_{\substack{m\to\infty\\m\in\mathcal M}}\FDR_m
  \ge\frac{\pi_0}{2}>q,
  \qquad
  \liminf_{\substack{m\to\infty\\m\in\mathcal M}}\Power_m
  \ge\frac12.
 \]
\end{enumerate}
Thus a count-only rule that remains capable of rejecting is invalid even on mixed configurations with nonvanishing power.
\end{corollary}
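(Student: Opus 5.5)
The first case is a statement about all large $m$: once $\psi_m(m,0)>q$, part~2 of Proposition~\ref{prop:no-repair} shows that the rule never rejects. That part uses only monotonicity, so it needs no change when alternatives are present. For the second case, the negation of ``eventually $\psi_m(m,0)>q$'' gives an infinite set $\mathcal M$ on which $\psi_m(m,0)\le q$. I will work along $\mathcal M$ and use the mixed latent-state model of Proposition~\ref{prop:mixed-prds}. Take $n_0=\lceil\pi_0 m\rceil$ dependent nulls with parameter $\varepsilon$ and $n_1=m-n_0$ independent alternatives with density $g_r$. That proposition already gives exact uniformity of the nulls, full support, and PRDS.

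The key observation is that, for the full cutoff, only the event that all scores are positive was used in the all-null case. With alternatives it is better to use the same monotonicity trick at the largest cutoff. On the event that every coordinate lies on the discovery side, take $t=\min_i W_i$. Then $N_+(t)=m$ and $N_-(t)=0$, so $\psi_m(m,0)\le q$ makes the rule pass and reject all $m$ coordinates. The FDP is then $n_0/m$ and the realized power is $1$. The catch is that the probability of this event, $\frac12\{(1-\varepsilon)^{n_0}r^{n_1}+\varepsilon^{n_0}(1-r)^{n_1}\}$, tends to zero as $m\to\infty$ for fixed $\varepsilon,r$. So I will let the parameters depend on $m$: choose $\varepsilon_m\downarrow0$ and $r_m\uparrow1$ fast enough that $(1-\varepsilon_m)^{n_0}r_m^{n_1}\to1$, for example $\varepsilon_m=1-r_m=m^{-2}$. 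The models are still full-support PRDS for each $m$, since Proposition~\ref{prop:mixed-prds} holds for any fixed $\varepsilon\in(0,1/2)$ and $r\in(1/2,1)$. The statement only asks for some sequence of models indexed by $m\in\mathcal M$, so $m$-dependent parameters are allowed.

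With this choice, in state $H=0$ the all-positive event has conditional probability tending to one. On that event the FDP is $n_0/m\to\pi_0$ and the power is $1$. In state $H=1$, I use only the trivial lower bound $0$. Averaging over $H$ gives $\liminf\FDR_m\ge\pi_0/2$ and $\liminf\Power_m\ge1/2$ along $\mathcal M$. Finally, $\pi_0>2q$ gives $\pi_0/2>q$, and the alternative fraction tends to $1-\pi_0>0$.

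The step I expect to need the most care is the bookkeeping in the second case. The rule may pass at a threshold smaller than $\min_i W_i$; that is, it may pass earlier in the ordering, since $\widehat t_\psi$ is the minimum passing threshold. On the all-positive event this is harmless, because every threshold is at most $\min_iW_i$, so all $m$ coordinates are rejected in any case. I also need to confirm that PRDS is preserved under the $m$-dependent parameters. That holds because the argument in Proposition~\ref{prop:mixed-prds} is pointwise in $(\varepsilon,r)$.
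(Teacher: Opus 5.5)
Your proposal is correct and is essentially the paper's proof. The first case is part~2 of Proposition~\ref{prop:no-repair}; the second uses the mixed model of Proposition~\ref{prop:mixed-prds} along $\mathcal M$ with $\varepsilon_m=1-r_m\to0$ (the paper takes $(m+1)^{-2}$), keeping only the event $\{H=0\}$ with every coordinate on the discovery side, where the counts are $(m,0)$, the FDP is $n_0/m$ and the power is one.

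Three harmless slips:
\begin{itemize}
\item Your choice $m^{-2}$ is admissible only for $m\ge2$.
\item The $H=1$ all-discovery-side probability is $\varepsilon^{n_0}r^{n_1}$, because the alternatives do not depend on $H$.
\item On the all-positive event every candidate threshold is \emph{at least} $\min_iW_i$ (not at most). That, together with $\psi_m(m,0)\le q$, is exactly why $\widehat t_\psi=\min_iW_i$ and all $m$ coordinates are rejected.
\end{itemize}
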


\begin{proof}
The first assertion is Proposition~\ref{prop:no-repair}.  For the second, take an infinite subsequence $\mathcal M$ on which $\psi_m(m,0)\le q$, let $n_0/m\to\pi_0$ along $\mathcal M$, and put $n_1=m-n_0$.  Use the mixed model of Proposition~\ref{prop:mixed-prds} with
\[
 \varepsilon_m=(m+1)^{-2},
 \qquad
 r_m=1-(m+1)^{-2}.
\]
The joint density is positive and the vector is PRDS with respect to its null coordinates for every $m\in\mathcal M$.  On the event that $H=0$ and every coordinate lies below $1/2$, the current counts at the smallest score magnitude are $(m,0)$.  The rule therefore rejects all hypotheses.  Along $\mathcal M$, the probability of this event is
\[
 \frac12(1-\varepsilon_m)^{n_0}r_m^{n_1}\longrightarrow\frac12.
\]
Its FDP is $n_0/m$ and its power is one.  Keeping only this event gives the two subsequential lower bounds.
\end{proof}

Proposition~\ref{prop:no-repair} is deliberately limited to deterministic monotone rules that use the two counts at the current threshold.  Randomization, nonmonotone rules, and procedures using the full sequence of counts across thresholds are outside its scope.  The obstruction is nevertheless informational rather than a defect of the plus-one constant: the two current counts cannot distinguish an ordinary sign fluctuation from a dependence-induced imbalance.  We therefore take a different route and quantify how far the joint null signs are from the ideal sign-flip law.

\subsection{FDR bounds from approximate null-sign symmetry}
\label{subsec:positive}

We use two diagnostics for the departure from ideal sign flips.  The first compares the entire conditional null-sign vector with independent fair coins.  The second examines, one null coordinate at a time, how strongly the remaining scores can predict its sign.  The coordinatewise diagnostic leads to two complementary guarantees: one excludes an event of small probability, and the other averages the relevant odds over the negative scores used by the mirror estimate.

To formalize these comparisons, suppose that every null score is nonzero almost surely; zero coordinates may simply be omitted, as in Assumption~\ref{ass:signflip}.  Conditional on
\[
 \mathcal G
 =\sigma\bigl(|W_1|,\ldots,|W_m|,(W_i)_{i\notin\Hnull}\bigr),
\]
let $\nu_{\mathcal G}$ be the conditional law of the null sign vector.  Let $\nu^\circ_{\mathcal G}$ be the reference law of independent fair signs on the same coordinates.  Write $d_{\rm TV}$ for total-variation distance and $D_{\rm KL}$ for Kullback--Leibler divergence (relative entropy).  Averaging these conditional discrepancies over $\mathcal G$, define
\[
 d_{\rm sign}
 =\E\{d_{\rm TV}(\nu_{\mathcal G},\nu^\circ_{\mathcal G})\},
 \qquad
 K_{\rm sign}
 =\E\{D_{\rm KL}(\nu_{\mathcal G}\|\nu^\circ_{\mathcal G})\}.
\]
Conditioning on $\mathcal G$ freezes the score magnitudes and the scores for the genuine alternatives; only the null signs remain random.
For the coordinatewise comparison, define the conditional log-odds
\begin{equation}
 \Lambda_j
 =\log
 \frac{\Prb(W_j>0\mid |W_j|,W_{-j})}
      {\Prb(W_j<0\mid |W_j|,W_{-j})},
 \qquad
 \Lambda_{\max}=\max_{j\in\Hnull}\Lambda_j,
 \label{eq:conditional-log-odds}
\end{equation}
for each null $j$.  Positive values favor the discovery side.  The ratio compares the two possible signs of $W_j$ while holding its magnitude and every other score fixed.  Set $\Lambda_j=+\infty$ when the denominator is zero and $\Lambda_j=-\infty$ when the numerator is zero; the null score is nonzero almost surely, so both cannot vanish.  Also set $\max\varnothing=-\infty$.  If $\Hnull=\varnothing$, then $\FDR=0$ and all bounds below are immediate.
Write $\FDR(q)$ for the FDR of the threshold when it is run at nominal level $q$.

The integrated argument uses negative null scores as leave-one-out controls.  For each null $j$, it recomputes the cutoff after forcing only the sign of $W_j$ to be positive; the resulting cutoff is then determined by $(|W_j|,W_{-j})$.  This argument additionally assumes that both conditional sign probabilities in \eqref{eq:conditional-log-odds} are positive almost surely.  Under this assumption, put
$\Omega_j=e^{\Lambda_j}$.  For each null $j$, let $W^{(j)}$ be obtained from $W$ by replacing $W_j$ with $|W_j|$, and define the counterfactual threshold
\[
 \widehat t_j(q)=\widehat t(W^{(j)};q).
\]
This is an analytical device rather than a second testing procedure.  Its active negative-null set is
\begin{equation}
 \mathcal A_q
 =\{j\in\Hnull:W_j\le-\widehat t_j(q)\}.
 \label{eq:active-negative-set}
\end{equation}
Thus $\mathcal A_q$ contains the negative nulls whose magnitudes reach the cutoff obtained after forcing their own signs to be positive.  Define
\begin{align}
 I_q
 &=\E\left[
   \ind\{\mathcal A_q\ne\varnothing\}
   \frac{1}{|\mathcal A_q|}
   \sum_{j\in\mathcal A_q}\Omega_j
 \right],
 \label{eq:integrated-active-factor}\\
 M_{\Hnull}^-
 &=\E\left[
   1\vee\max_{\substack{j\in\Hnull\\W_j<0}}\Omega_j
 \right],
 \label{eq:negative-side-factor}
\end{align}
where the inner maximum in \eqref{eq:negative-side-factor} is zero when its index set is empty.
The factor $I_q$ follows the random active set and therefore depends on the testing level $q$.  The larger factor $M_{\Hnull}^-$ does not depend on $q$, which makes it more convenient for calibration.
At this stage, both factors are still theoretical quantities because they involve the unknown null set.  The Gaussian subsection below replaces $M_{\Hnull}^-$ by a null-set-free upper bound.

\begin{proposition}[FDR bounds under approximate null-sign symmetry]
\label{prop:approx-sign}
For the knockoff+ threshold \eqref{eq:threshold} run at nominal level $q$,
\begin{align}
 \FDR(q)
 &\le q+d_{\rm sign}
 \le q+\sqrt{K_{\rm sign}/2},
 \label{eq:tv-bound}\\
 \FDR(q)
 &\le e^\varepsilon q+
       \Prb(\Lambda_{\max}>\varepsilon),
 \qquad \varepsilon\ge0.
 \label{eq:odds-bound}
\end{align}
Under the additional positivity assumption above, the integrated bounds are
\begin{equation}
 \FDR(q)\le qI_q\le qM_{\Hnull}^-.
 \label{eq:integrated-odds-bound}
\end{equation}
\end{proposition}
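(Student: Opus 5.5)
The plan has three parts, one for each displayed bound.

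\emph{Total-variation bound \eqref{eq:tv-bound}.} Work conditionally on $\mathcal G$. Given $\mathcal G$, the FDP is a deterministic $[0,1]$-valued function of the null sign vector, because the magnitudes and the nonnull scores are frozen. Let $\widetilde W$ be the score vector obtained by keeping $\mathcal G$ and drawing the null signs from $\nu^\circ_{\mathcal G}$. Then $\widetilde W$ satisfies Assumption~\ref{ass:signflip}, so Proposition~\ref{prop:known} gives $\E\{\FDP(\widetilde W)\}\le q$. For any $[0,1]$-valued function $f$,
\[
 \bigl|\E_{\nu_{\mathcal G}}f-\E_{\nu^\circ_{\mathcal G}}f\bigr|\le d_{\rm TV}(\nu_{\mathcal G},\nu^\circ_{\mathcal G}).
\]
Averaging this over $\mathcal G$ gives $\FDR\le q+d_{\rm sign}$. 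Pinsker's inequality gives $d_{\rm TV}\le\sqrt{D_{\rm KL}/2}$ conditionally, and Jensen's inequality for the concave square root then gives $\E\sqrt{D_{\rm KL}/2}\le\sqrt{K_{\rm sign}/2}$.

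\emph{Exceptional-event bound \eqref{eq:odds-bound}.} Split the FDP numerator according to the event $\{\Lambda_{\max}\le\varepsilon\}$. On the complement, bound $\FDP\le1$; this contributes at most $\Prb(\Lambda_{\max}>\varepsilon)$. On the event itself, use $B_j=\{\Lambda_j\le\varepsilon\}\in\sigma(|W_j|,W_{-j})$ for each null $j$. Since $\{\Lambda_{\max}\le\varepsilon\}\subseteq B_j$, the remaining FDR part is at most the contribution controlled in Lemma~\ref{lem:leave-one-out-odds}. Its hypothesis \eqref{eq:leave-one-out-odds} holds with $\Gamma=e^\varepsilon$:
\[
 \Prb(W_j>0,B_j\mid|W_j|,W_{-j})=\ind_{B_j}\,e^{\Lambda_j}\,\Prb(W_j<0\mid\cdot)\le e^\varepsilon\,\Prb(W_j<0\mid\cdot).
\]
The lemma then bounds this part by $e^\varepsilon q$.

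\emph{Integrated bound \eqref{eq:integrated-odds-bound}: reduction to a per-null term.} This is the main step. I would redo the Barber--Cand\`es--Samworth leave-one-out argument directly rather than citing the lemma. Write
\[
 \FDR=\sum_{j\in\Hnull}\E\left[\frac{\ind\{W_j\ge\widehat t\}}{N_+(\widehat t)\vee1}\right].
\]
On $\{W_j>0\}$ the vectors $W$ and $W^{(j)}$ coincide, so $\widehat t=\widehat t_j$ and $N_+(\widehat t)=N_+^{(j)}(\widehat t_j)$. Both quantities are $\sigma(|W_j|,W_{-j})$-measurable. Conditioning on $(|W_j|,W_{-j})$ and converting the positive-sign probability into the negative-sign probability via the odds $\Omega_j$ (this uses positivity of both sign probabilities), the $j$th term becomes
\[
 \E\left[\Omega_j\,\ind\{W_j<0\}\,\frac{\ind\{|W_j|\ge\widehat t_j\}}{N_+^{(j)}(\widehat t_j)}\right]
 =\E\left[\Omega_j\,\frac{\ind\{j\in\mathcal A_q\}}{N_+^{(j)}(\widehat t_j)}\right].
\]

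\emph{Integrated bound: the active-set combinatorial step.} Next show that for $j\in\mathcal A_q$ all the counterfactual thresholds $\widehat t_j$ coincide with a common value $t^*$, and $N_+^{(j)}(\widehat t_j)\ge|\mathcal A_q|/q$. Flipping one negative null to positive lowers $N_-$ by one and raises $N_+$ by one at every threshold at or below its magnitude. So $t^*$ should be the threshold produced by a canonical flip. The knockoff+ inequality at $t^*$ then reads
\[
 1+N_-^{(j)}(t^*)\le q\,N_+^{(j)}(t^*),
\]
and $1+N_-^{(j)}(t^*)\ge|\mathcal A_q|$ because the other active nulls are still negative with magnitude at least $t^*$. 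Summing over $j$ gives
\[
 \FDR\le q\,\E\left[\ind\{\mathcal A_q\ne\varnothing\}\frac1{|\mathcal A_q|}\sum_{j\in\mathcal A_q}\Omega_j\right]=qI_q.
\]
I expect this invariance of $\widehat t_j$ across the active set to be the main obstacle. I would first try to adapt the argument of Barber et al.\ showing that $\widehat t$ is unchanged when a negative coordinate above the threshold is flipped, using minimality of the threshold and monotonicity of the counts. If exact equality fails, I would instead settle for a bound $N_+^{(j)}(\widehat t_j)\ge|\mathcal A_q|/q$ for each active $j$.

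\emph{Final inequality.} Since $\mathcal A_q$ consists of negative nulls, the average of $\Omega_j$ over $\mathcal A_q$ is at most $1\vee\max_{j\in\Hnull,\,W_j<0}\Omega_j$. Taking expectations gives $I_q\le M_{\Hnull}^-$.
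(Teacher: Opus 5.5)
Your total-variation and exceptional-event arguments match the paper's: a reference law that keeps $\mathcal G$ and makes the null signs fair coins, followed by Pinsker and Jensen; and Lemma~\ref{lem:leave-one-out-odds} with $B_j=\{\Lambda_j\le\varepsilon\}$. Your leave-one-out reduction for the integrated bound is also a correct variant. You keep $N_+^{(j)}(\widehat t_j)$ in the denominator and apply the passing inequality at the counterfactual threshold. The paper instead first uses the passing inequality at the actual threshold, bounding $\FDP$ by $q$ times the null positives over one plus the null negatives, and only then swaps signs.

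Both routes hinge on the step you leave open: every $j\in\mathcal A_q$ must have the same counterfactual threshold, so that every other active null is counted in $N_-^{(j)}(\widehat t_j)$. As written this is a genuine gap. The fact you propose to adapt is also false. Flipping a negative coordinate whose magnitude exceeds $\widehat t$ can lower the threshold. With $q=1/2$ and scores $6,-5,4,3,2,-1$, one has $\widehat t=2$. After flipping $-5$, the cutoff $t=1$ passes with ratio $2/5$. Your fallback does not avoid the issue either. Proving $N_+^{(j)}(\widehat t_j)\ge|\mathcal A_q|/q$ requires $|W_k|\ge\widehat t_j$ for every other active $k$, and that does not follow from $|W_k|\ge\widehat t_k$ without comparing $\widehat t_j$ with $\widehat t_k$.

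The missing idea is to compare the two counterfactual vectors with each other rather than with $W$. Suppose $j,k\in\mathcal A_q$ and $\widehat t_j\le\widehat t_k$, and put $t=\widehat t_j$. Then $|W_j|\ge t$ and $|W_k|\ge\widehat t_k\ge t$. Under $W^{(j)}$ the pair contributes $|W_j|$ to the positive count and $W_k\le-t$ to the negative count. Under $W^{(k)}$ the roles are reversed. All other coordinates, and the candidate set $\mathcal T$, are identical. Hence $W^{(j)}$ and $W^{(k)}$ have the same counts at $t$, so $t$ passes for $W^{(k)}$, and minimality gives $\widehat t_k\le\widehat t_j$. This gives a common threshold $t^*$. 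Every $k\in\mathcal A_q\setminus\{j\}$ then satisfies $W_k\le-t^*$ in $W^{(j)}$, so $1+N_-^{(j)}(t^*)\ge|\mathcal A_q|$, and the rest of your argument goes through.
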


The bounds are complementary.  Total variation is useful when the full conditional sign law is close to fair coins, the integrated form when each sign is only weakly predictable from the others, and the exceptional-event form when very large odds occur only rarely.  None is uniformly smallest.  None requires independent null signs; the integrated form instead needs the positivity condition stated above.

\begin{proof}[Proof of the total-variation and exceptional-event bounds]
For \eqref{eq:tv-bound}, construct a reference law $\Prb^\circ$ that keeps $\mathcal G$ unchanged and, conditionally on $\mathcal G$, replaces the null signs by independent fair coins.  Assumption~\ref{ass:signflip} holds under $\Prb^\circ$, so Proposition~\ref{prop:known} gives $\E^\circ(\FDP)\le q$.  Because the two laws have the same marginal law for $\mathcal G$, averaging their conditional total-variation distances gives
\[
 d_{\rm TV}(\Prb,\Prb^\circ)
 =\E\{d_{\rm TV}(\nu_{\mathcal G},\nu^\circ_{\mathcal G})\}
 =d_{\rm sign}.
\]
Since $0\le\FDP\le1$, its expectations under the two laws differ by at most this total-variation distance.  Pinsker's inequality and Jensen's inequality give the second inequality in \eqref{eq:tv-bound}.

For \eqref{eq:odds-bound}, put $B_j=\{\Lambda_j\le\varepsilon\}$.  This event is measurable with respect to $(|W_j|,W_{-j})$.  For every null $j$,
\begin{align*}
 &\Prb(W_j>0,B_j\mid |W_j|,W_{-j})\\
 &\quad=\ind\{\Lambda_j\le\varepsilon\}
       \Prb(W_j>0\mid |W_j|,W_{-j})
 \le e^\varepsilon
       \Prb(W_j<0\mid |W_j|,W_{-j}).
\end{align*}
Lemma~\ref{lem:leave-one-out-odds} therefore bounds the expected FDP contribution from nulls satisfying $B_j$ by $e^\varepsilon q$.  All remaining nulls contribute at most the indicator of $\{\Lambda_{\max}>\varepsilon\}$.  Taking expectations proves \eqref{eq:odds-bound} without any conditional-independence assumption.
\end{proof}

\begin{deferrednewproof}{C}{Proof of the integrated bound in Proposition~\ref{prop:approx-sign}}{app:proof-integrated-odds}
Write $T=\widehat t(W;q)$.  When $T<\infty$, the defining inequality for the threshold gives
\[
 \FDP
 \le q\,
 \frac{\sum_{j\in\Hnull}\ind\{W_j\ge T\}}
      {1+\sum_{j\in\Hnull}\ind\{W_j\le-T\}},
\]
because the denominator using only null negative scores is no larger than the denominator using all negative scores.
With indicators involving $T=\infty$ interpreted as zero, the same bound holds when $T=\infty$, because then both sides are zero.

Since $\widehat t_j(q)$ is measurable with respect to $(|W_j|,W_{-j})$,
\begin{align*}
 &\E\left[
 \frac{\sum_{j\in\Hnull}\ind\{W_j\ge T\}}
      {1+\sum_{j\in\Hnull}\ind\{W_j\le-T\}}
 \right]\\
 &=\sum_{j\in\Hnull}
 \E\left[
 \frac{\ind\{W_j>0,\ |W_j|\ge \widehat t_j(q)\}}
      {1+\sum_{k\in\Hnull,\,k\ne j}\ind\{W_k\le-\widehat t_j(q)\}}
 \right]\\
 &=\sum_{j\in\Hnull}
 \E\left[
 \frac{\Omega_j\ind\{W_j<0,\ |W_j|\ge \widehat t_j(q)\}}
      {1+\sum_{k\in\Hnull,\,k\ne j}\ind\{W_k\le-\widehat t_j(q)\}}
 \right].
\end{align*}
The first equality uses that, when $W_j>0$, replacing $W_j$ by $|W_j|$ does not change the score vector, whereas both numerator indicators vanish when $W_j<0$.  The second equality is the defining conditional-odds identity.

It remains to simplify the last sum.  We need two deterministic threshold facts.  First, all active coordinates have the same counterfactual threshold.  To see this, let $j,k\in\mathcal A_q$ and suppose, without loss of generality, that $\widehat t_j(q)\le\widehat t_k(q)$.  At the candidate threshold $t=\widehat t_j(q)$, the two vectors $W^{(j)}$ and $W^{(k)}$ have the same positive and negative counts: both $|W_j|$ and $|W_k|$ are at least $t$, and the pair contributes one positive and one negative score under either vector.  Since $t$ passes for $W^{(j)}$, it also passes for $W^{(k)}$, giving $\widehat t_k(q)\le\widehat t_j(q)$.  Thus equality holds.

Second, at this common threshold the selected negative nulls are exactly $\mathcal A_q$.  More precisely, for any $j\in\mathcal A_q$,
\[
 \{k\in\Hnull:W_k\le-\widehat t_j(q)\}=\mathcal A_q.
\]
Inclusion from right to left follows from equality of the active thresholds.  Conversely, if $W_k\le-\widehat t_j(q)$ and $\widehat t_k(q)>\widehat t_j(q)$, the same count comparison at $t=\widehat t_j(q)$ contradicts the minimality of $\widehat t_k(q)$; if $\widehat t_k(q)\le\widehat t_j(q)$, then $W_k\le-\widehat t_k(q)$ directly.

Therefore the last displayed sum equals zero when $\mathcal A_q$ is empty and otherwise equals
\[
 \frac{1}{|\mathcal A_q|}\sum_{j\in\mathcal A_q}\Omega_j.
\]
This proves the first inequality in \eqref{eq:integrated-odds-bound}.  Bounding the active-set average by the negative-side maximum proves the second.
\end{deferrednewproof}

\begin{newrevision}
The next result shows that both integrated factors are sharp even for exchangeable scores with standard normal margins and a positive joint density.

\begin{proposition}[Sharpness of the integrated factors]
\label{prop:integrated-sharp}
Fix $a\in[1/4,1/2)$.  For every $m\ge2$, put
\[
 q_m=\frac1m,
 \qquad
 c_m=\frac{1-2a}{2^m-2}.
\]
There is an exchangeable all-null score vector with standard normal margins, a joint density that can be chosen positive everywhere, and both conditional sign probabilities in \eqref{eq:conditional-log-odds} positive almost surely, such that
\begin{equation}
 \FDR(q_m)=q_mI_{q_m}=a,
 \qquad
 M_{\Hnull}^-=1+m(a-c_m).
 \label{eq:sharp-integrated-factors}
\end{equation}
Consequently, for every fixed $a\in[1/4,1/2)$,
\[
 \frac{\FDR(q_m)}{q_mM_{\Hnull}^-}
 =\frac{a}{a-c_m+1/m}\longrightarrow1
 \qquad(m\to\infty).
\]
Thus $\FDR(q_m)\le q_mI_{q_m}$ holds with equality, while
$\FDR(q_m)/(q_mM_{\Hnull}^-)\to1$ along $q_m=1/m\to0$.
\end{proposition}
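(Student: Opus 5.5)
The plan is to build the example by hand. Magnitudes and signs will be independent, and the sign vector will have an exchangeable law that puts extra mass on the two constant sign patterns. Let $M_1,\ldots,M_m$ be i.i.d.\ with the law of $|N(0,1)|$. Independently, let $S\in\{\pm1\}^m$ have the law
\[
 \Prb(S=\mathbf1)=\Prb(S=-\mathbf1)=a,
 \qquad
 \Prb(S=s)=c_m\ \text{for each of the other } 2^m-2 \text{ patterns},
\]
and put $W_i=S_iM_i$. The masses sum to one by the choice of $c_m$.

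First check the structural claims.
\begin{itemize}
 \item The sign law is invariant under permutations and under a global flip, so each $S_i$ is a fair sign independent of $M_i$. Hence each $W_i$ is exactly $N(0,1)$, and the vector is exchangeable.
 \item Because $a<1/2$ gives $c_m>0$, every sign pattern has positive probability. The joint density is therefore positive away from the coordinate hyperplanes, which is Lebesgue-null. If a density positive everywhere is wanted, this is all that is needed.
 \item Both conditional sign probabilities in \eqref{eq:conditional-log-odds} are positive.
 \item Since $S$ is independent of the magnitudes, $\Omega_j$ depends only on $S_{-j}$. It equals $a/c_m$ if $S_{-j}$ is all positive, $c_m/a$ if $S_{-j}$ is all negative, and $1$ otherwise, because then both completions are mixed patterns.
\end{itemize}

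Next analyse the threshold at $q_m=1/m$. As in the $m=11$ calculation after Proposition~\ref{prop:nongaussian}, the condition $(1+N_-)/N_+\le 1/m$ forces $N_+\ge m(1+N_-)\ge m$. So a threshold passes only when all $m$ scores are positive, which gives $\FDR(q_m)=\Prb(S=\mathbf1)=a$.

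For the active set \eqref{eq:active-negative-set}, apply the same fact to $W^{(j)}$. The cutoff $\widehat t_j(q_m)$ is finite exactly when every $S_k$ with $k\ne j$ is positive, and it then equals $\min_k M_k\le M_j$. Hence $\mathcal A_{q_m}\ne\varnothing$ exactly when $S$ has a single negative coordinate $j$, and then $\mathcal A_{q_m}=\{j\}$ with $\Omega_j=a/c_m$. Summing over the $m$ patterns with one negative sign gives
\[
 I_{q_m}=m\,c_m\cdot\frac{a}{c_m}=ma,
 \qquad
 q_mI_{q_m}=a=\FDR(q_m).
\]

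For $M_{\Hnull}^-$, split on the sign pattern.
\begin{itemize}
 \item Pattern $\mathbf1$, probability $a$: there are no negative nulls, so the inner maximum is $0$ and the integrand is $1$.
 \item Pattern $-\mathbf1$, probability $a$: every $\Omega_j=c_m/a$, and the integrand is $1\vee(c_m/a)$.
 \item Exactly one negative sign, total probability $mc_m$: the integrand is $1\vee(a/c_m)$.
 \item The remaining mixed patterns, probability $1-2a-mc_m$: every negative coordinate has a mixed $S_{-j}$, so the integrand is $1$.
\end{itemize}
The case $m=2$ is treated directly, since there every mixed pattern has exactly one negative sign.

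The hypothesis $a\ge1/4$ enters here. It gives $c_m\le c_2=(1-2a)/2\le a$, so that $1\vee(a/c_m)=a/c_m$ and $1\vee(c_m/a)=1$. Then
\[
 M_{\Hnull}^-=2a+(1-2a-mc_m)+ma=1+m(a-c_m).
\]
The final ratio $a/(a-c_m+1/m)$ tends to $1$ because $c_m\to0$ geometrically and $1/m\to0$. The main point needing care is this bookkeeping of the $1\vee$ truncation and the small-$m$ edge case; the rest is direct counting.
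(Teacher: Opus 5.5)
Your proposal is correct and follows the paper's proof essentially step for step. It uses the same sign law with mass $a$ on each constant pattern and independent $|N(0,1)|$ magnitudes, the same observation that at $q_m=1/m$ only the full all-positive prefix can pass, the same identification of $\mathcal A_{q_m}$ with the unique negative coordinate in one-negative patterns, and the same use of $a\ge1/4$ to obtain $c_m\le a$ when evaluating $M_{\Hnull}^-$; your explicit case split of $\Omega_j$ and of the $1\vee$ truncation is just a more detailed version of the paper's bookkeeping.
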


The construction puts probability $a$ on each of the all-positive and all-negative sign vectors and spreads the remaining probability equally over all other sign vectors.  Independent magnitudes distributed as $|N(0,1)|$ then give standard normal margins.  After the scores are ordered by decreasing magnitude, only the complete prefix of all $m$ scores can pass at $q_m=1/m$, which makes the identities in \eqref{eq:sharp-integrated-factors} explicit.
\end{newrevision}

\begin{deferrednewproof}{C}{Proof of Proposition~\ref{prop:integrated-sharp}}{app:proof-integrated-sharp}
Let $U_1,\ldots,U_m$ be independent variables with distribution $|N(0,1)|$, independent of a sign vector $S$.  Give the all-positive and all-negative sign configurations probability $a$ each, and give every other sign configuration probability $c_m$.  Set $W_i=S_iU_i$.  This law is exchangeable and is unchanged by a global sign flip.  Away from the zero-coordinate set, its density is
\[
 2^m\Prb\{S=\operatorname{sign}(w)\}
 \prod_{i=1}^m\phi(|w_i|).
\]
It is positive there.  The zero-coordinate set has $m$-dimensional volume zero, so assigning positive density values there does not change the distribution.  The identity $2a+(2^m-2)c_m=1$ normalizes the sign law, and $a,c_m>0$.  Thus every sign vector has positive probability, and both conditional sign probabilities in \eqref{eq:conditional-log-odds} are positive almost surely.  Global sign symmetry makes each marginal sign fair; together with the independent $|N(0,1)|$ magnitude, this makes every $W_i$ standard normal.

At $q_m=1/m$, no prefix of length $k<m$ can pass, because even an all-positive prefix has ratio $1/k>1/m$.  At the full prefix, the ratio passes exactly when every sign is positive, so $\FDR(q_m)=a$.  If a configuration has $r$ negative signs, forcing one negative sign to be positive makes the counterfactual threshold pass exactly when $r=1$.  In that case its cutoff is $\min_iU_i$, so the unique negative coordinate is active.  Hence $\mathcal A_{q_m}$ is nonempty exactly when the sign vector has one negative coordinate, and then it contains that coordinate alone.  Its conditional positive-to-negative sign odds are $a/c_m$.  There are $m$ one-negative configurations, each with probability $c_m$, and therefore
\[
 I_{q_m}=mc_m\frac{a}{c_m}=ma,
 \qquad q_mI_{q_m}=a.
\]

Since $a\ge1/4$ and $m\ge2$, we have $c_m\le a$.  For $M_{\Hnull}^-$, the integrand equals $a/c_m$ on the $m$ one-negative configurations.  It equals one on the all-positive and all-negative configurations and on every configuration with between two and $m-1$ negative signs.  Hence
\[
 M_{\Hnull}^-
 =mc_m\frac{a}{c_m}+(1-mc_m)
 =1+m(a-c_m),
\]
which proves \eqref{eq:sharp-integrated-factors}.
\end{deferrednewproof}

\begin{newrevision}
At the conventional level $q=0.1$, take $m=10$ and let $a\uparrow1/2$.  Then
\[
 \frac{\FDR}{qM_{\Hnull}^-}\longrightarrow\frac56,
 \qquad
 \frac{qM_{\Hnull}^-}{\FDR}\longrightarrow\frac65.
\]
Thus, in this limiting family, $qM_{\Hnull}^-$ is only $20\%$ above the exact FDR even at $q=0.1$.
\end{newrevision}

Panels~\ref{fig:adjustment-numerics}(c)--(d) reuse the sign-mixture idea from the proof of Proposition~\ref{prop:integrated-sharp}, with smaller aligned-state probabilities and genuine signals.  They illustrate why the two calibrations are complementary: exceptional-event calibration can set aside a rare high-odds region, whereas integrated calibration averages those large odds over the possible negative controls.

The exceptional-event bound becomes a level adjustment if we allow the odds bound $\Gamma_\delta$ to be violated on an event of probability at most $\delta$.  For $0\le\delta<1$, define
\begin{equation}
 \Gamma_\delta
 =\inf\left\{\Gamma\in[1,\infty):
       \Prb(\Lambda_{\max}>\log\Gamma)\le\delta\right\},
 \label{eq:gamma-delta}
\end{equation}
with $\inf\varnothing=\infty$.  Whenever $\Gamma_\delta<\infty$, Proposition~\ref{prop:approx-sign} gives
\begin{equation}
 \FDR(q)\le \Gamma_\delta q+\delta.
 \label{eq:Cq-delta}
\end{equation}
Thus $\Gamma_\delta$ is the smallest multiplicative odds bound that may be violated with probability at most $\delta$.
For a target $\alpha>\delta$, the exceptional-event and integrated bounds give the two ideal input levels
\begin{equation}
 q_{\rm tail}=\frac{\alpha-\delta}{\Gamma_\delta},
 \qquad
 q_{\rm int}=\frac{\alpha}{M_{\Hnull}^-},
 \label{eq:ideal-adjusted-levels}
\end{equation}
with a ratio interpreted as zero when its denominator is infinite.  If both factors are known, one may run at
\begin{equation}
 q_{\rm run}=q_{\rm tail}\vee q_{\rm int},
 \label{eq:adjusted-q}
\end{equation}
because each branch individually justifies its proposed level.  Exact conditional sign flips correspond to $\Gamma_0=M_{\Hnull}^-=1$.

In practice, the two ideal factors may be unknown.  Independent calibration data can instead provide conservative upper bounds.  We state the two guarantees separately because they bound different objects: a high quantile of the worst conditional odds, or the mean negative-side odds.  In both results, $\eta$ bounds the probability that the calibration bound fails.  Because $\FDP\le1$, reserving $\eta$ of the target accounts for that event.

\begin{corollary}[Independent calibration of the integrated factor]
\label{cor:estimated-integrated-factor}
Under the additional positivity assumption in Proposition~\ref{prop:approx-sign}, fix $\alpha\in(0,1)$ and $\eta\in[0,\alpha)$.  Let $\mathcal D$ be independent of the testing scores, and let $\widehat M\in[1,\infty]$ be $\mathcal D$-measurable with
\[
 \Prb(M_{\Hnull}^-\le\widehat M)\ge1-\eta.
\]
Running at $q_{\rm run}=(\alpha-\eta)/\widehat M$, with $q_{\rm run}=0$ when $\widehat M=\infty$, gives $\FDR\le\alpha$.
\end{corollary}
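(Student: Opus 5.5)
Condition on the calibration data $\mathcal D$ and apply the integrated bound of Proposition~\ref{prop:approx-sign} at the $\mathcal D$-measurable level $q_{\rm run}$. The key structural fact is that $\mathcal D$ is independent of the testing scores. Conditioning on $\mathcal D=d$ therefore leaves the joint law of $W$ unchanged. In particular, it leaves unchanged the conditional log-odds $\Lambda_j$, the positivity assumption, and the level-free factor $M_{\Hnull}^-$, which is a deterministic number determined by the law of $W$ alone. Consequently, for each fixed $d$ with $\widehat M(d)<\infty$, the level $q_{\rm run}(d)=(\alpha-\eta)/\widehat M(d)$ is a fixed nominal level in $(0,1)$, and \eqref{eq:integrated-odds-bound} gives
\[
 \E(\FDP\mid\mathcal D=d)=\FDR\{q_{\rm run}(d)\}\le q_{\rm run}(d)\,M_{\Hnull}^-
 =(\alpha-\eta)\frac{M_{\Hnull}^-}{\widehat M(d)}.
\]
When $\widehat M(d)=\infty$ we run at level zero. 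The threshold then never passes, because the ratio $(1+N_-)/(N_+\vee1)$ is positive, so the conditional FDP is zero.

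Split on the calibration event $E=\{M_{\Hnull}^-\le\widehat M\}$. Since $M_{\Hnull}^-$ is a constant, $E$ is $\mathcal D$-measurable. On $E$ the display above gives a conditional FDR of at most $\alpha-\eta$, and this also covers the case $\widehat M=\infty$. On $E^c$ we use the trivial bound $\FDP\le1$. Taking expectations over $\mathcal D$,
\[
 \FDR\le(\alpha-\eta)\Prb(E)+\Prb(E^c)\le(\alpha-\eta)+\eta=\alpha,
\]
using $\Prb(E^c)\le\eta$.

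The main point needing care is the legitimacy of the conditioning step. Proposition~\ref{prop:approx-sign} is stated for a deterministic level, so I will verify two things. First, independence makes the regular conditional law of $W$ given $\mathcal D$ equal to its unconditional law. Second, the FDP is a jointly measurable function of $(W,q)$, so the conditional expectation can be evaluated by plugging in $q=q_{\rm run}(d)$; this is the standard substitution (freezing) lemma for independent variables. I also need the edge case $q_{\rm run}\ge1$ to be excluded, and it is, since $\widehat M\ge1$ and $\alpha-\eta<1$. A boundary worry is $\alpha=\eta$, which the hypothesis $\eta<\alpha$ rules out; in any case level zero would again reject nothing. No other obstacle is expected.
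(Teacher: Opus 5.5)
Your proposal is correct and follows the paper's own argument. You condition on $\mathcal D$, use independence together with \eqref{eq:integrated-odds-bound} to bound the conditional FDR by $M_{\Hnull}^-q_{\rm run}\le\alpha-\eta$ on the calibration event, and use $\FDP\le1$ on its complement; your remarks on the substitution lemma and the $\widehat M=\infty$ case make explicit what the paper leaves implicit.
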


\begin{proof}
On the calibration event, independence and \eqref{eq:integrated-odds-bound} give conditional FDR at most $M_{\Hnull}^-q_{\rm run}\le\alpha-\eta$.  On its complement, use $\FDP\le1$ and average over $\mathcal D$.
\end{proof}

\begin{corollary}[Independent calibration of the exceptional-event factor]
\label{cor:estimated-gamma}
Fix $\alpha\in(0,1)$ and $\delta,\eta\in[0,1)$ with $\delta+\eta<\alpha$.  Let $\mathcal D$ be independent of the testing scores.  Suppose $\widehat\Gamma\in[1,\infty]$ is $\mathcal D$-measurable and, on an event $A\in\sigma(\mathcal D)$ with $\Prb(A)\ge1-\eta$,
\[
 \Prb(\Lambda_{\max}>\log\widehat\Gamma\mid\mathcal D)
 \le\delta
 \quad\text{whenever }\widehat\Gamma<\infty.
\]
Running at $q_{\rm run}=(\alpha-\delta-\eta)/\widehat\Gamma$, with $q_{\rm run}=0$ when $\widehat\Gamma=\infty$, gives $\FDR\le\alpha$.
\end{corollary}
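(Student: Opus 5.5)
The argument mirrors Corollary~\ref{cor:estimated-integrated-factor}: condition on the calibration data $\mathcal D$, apply the exceptional-event bound \eqref{eq:odds-bound} on the good calibration event, and charge the bad event to the reserved budget $\eta$ using $\FDP\le1$.

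First, dispose of the trivial case. If $\widehat\Gamma=\infty$, then $q_{\rm run}=0$. At level zero no candidate threshold satisfies $(1+N_-(t))/(N_+(t)\vee1)\le0$, so nothing is rejected and $\FDP=0$.

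Second, handle the case $\widehat\Gamma<\infty$. Since $\mathcal D$ is independent of the testing scores, conditioning on $\mathcal D$ leaves the joint law of $W$ unchanged. In particular, the conditional log-odds $\Lambda_j$ in \eqref{eq:conditional-log-odds} are the same functions of $W$ whether they are computed under the original law or under the law given $\mathcal D$. Treating $\widehat\Gamma$ and $q_{\rm run}$ as constants, I will apply \eqref{eq:odds-bound} under the conditional law with $\varepsilon=\log\widehat\Gamma\ge0$. This gives
\[
 \E(\FDP\mid\mathcal D)\le \widehat\Gamma\,q_{\rm run}+\Prb(\Lambda_{\max}>\log\widehat\Gamma\mid\mathcal D).
\]
On the event $A$, the second term is at most $\delta$ by hypothesis, and the first term equals $\alpha-\delta-\eta$. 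Hence
\[
 \E(\FDP\mid\mathcal D)\le\alpha-\eta\quad\text{on }A.
\]
On $A^c$, I use the trivial bound $\E(\FDP\mid\mathcal D)\le1$.

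Third, average over $\mathcal D$:
\[
 \FDR\le(\alpha-\eta)\Prb(A)+\Prb(A^c)\le\alpha-\eta+\eta=\alpha.
\]

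The only delicate point is justifying the conditional application of Proposition~\ref{prop:approx-sign}. That proposition is stated for a fixed level $q$ and a fixed $\varepsilon$, whereas here both are $\mathcal D$-measurable. I will make this rigorous by writing the FDR as an iterated integral over the product law of $(\mathcal D,W)$ given by independence. Evaluating the inner integral at a fixed value of $\mathcal D$ applies the proposition with deterministic $q$ and $\varepsilon$, and the measurability of $\Lambda_{\max}$ in $W$ alone ensures the conditional probability in the hypothesis is the same quantity that appears in \eqref{eq:odds-bound}.
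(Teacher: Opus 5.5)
Your proposal is correct and follows the paper's proof: on $A$ you apply \eqref{eq:odds-bound} conditionally on $\mathcal D$ with $\varepsilon=\log\widehat\Gamma$ to get conditional FDR at most $\alpha-\eta$. You note that $\widehat\Gamma=\infty$ yields no rejections, charge $A^c$ to $\eta$ via $\FDP\le1$, and your product-law argument for freezing the $\mathcal D$-measurable $q_{\rm run}$ and $\varepsilon$ supplies the justification the paper leaves implicit.
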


In words, with probability at least $1-\eta$ over the calibration data, $\widehat\Gamma$ leaves at most $\delta$ probability under a fresh independent testing draw in the exceptional region.

\begin{proof}
On $A$, \eqref{eq:odds-bound} gives conditional FDR at most $\alpha-\eta$; if $\widehat\Gamma=\infty$, the procedure returns no rejections.  On $A^c$, use $\FDP\le1$ and average over $\mathcal D$.
\end{proof}

If both upper bounds hold on one joint event of probability at least $1-\eta$, the two estimated levels can again be combined:
\begin{equation}
 q_{\rm run}
 =\frac{\alpha-\delta-\eta}{\widehat\Gamma}
 \vee\frac{\alpha-\eta}{\widehat M}.
 \label{eq:estimated-combined-level}
\end{equation}
If this maximum is zero, the procedure returns no rejections.  Otherwise the larger finite branch gives conditional FDR at most $\alpha-\eta$ on the joint event, and $\FDP\le1$ handles its complement.  If the two upper bounds are proved on separate events, their failure probabilities must be added when defining $\eta$.

A concrete non-Gaussian implementation is possible when the same score construction can be applied to several separate datasets generated under a known all-null condition, such as negative-control experiments or null-condition batches.  These datasets must be mutually independent and independent of the testing data; repeatedly recomputing scores from the testing data does not qualify.  Coordinates within each calibration dataset may still be dependent.  Appendix~\ref{app:negative-control-calibration} gives the construction and assumptions in detail.  The next subsection instead derives a bound from a specified Gaussian covariance model.

\subsection{Gaussian calibration from the covariance matrix}
\label{subsec:gaussian-calibration}

Under a specified Gaussian model, the covariance matrix determines every conditional normal distribution and therefore the relevant sign odds.  This turns the abstract conditional-odds bounds above into computable quantities.  Let $m\ge2$ and
\[
 W\sim N_m(\mu,\Sigma),
 \qquad \Sigma_{jj}=1,
 \qquad \Sigma\text{ positive definite},
\]
and take $\Hnull=\{j:\mu_j=0\}$.  A subscript $-j$ means that coordinate $j$, or the corresponding row or column, is removed.  For $j=1,\ldots,m$, write
\[
 b_j=\Sigma_{-j,-j}^{-1}\Sigma_{-j,j},
 \qquad
 \tau_j^2=1-\Sigma_{j,-j}b_j,
 \qquad
 r_j^2=\Sigma_{j,-j}\Sigma_{-j,-j}^{-1}\Sigma_{-j,j}
       =1-\tau_j^2,
\]
and let $r_j$ denote the nonnegative square root of $r_j^2$, with $r_\star=\max_jr_j<1$.  Here $b_j$ is the Gaussian regression coefficient, $\tau_j^2$ is the conditional variance of coordinate $j$, and $b_j^{\mathsf T}(W_{-j}-\mu_{-j})$ is the conditional mean of a null coordinate $W_j$.  Because every marginal variance is one, $r_j^2$ is the coefficient of determination from regressing coordinate $j$ on the remaining coordinates.  Thus $r_\star^2=\max_jr_j^2$ is the largest fraction of any coordinate's variance explained collectively by all the others; it is not the largest pairwise squared correlation.

The calibration uses a generic centered Gaussian draw $X\sim N_m(0,\Sigma)$.  Define the coordinatewise log-odds statistic by
\begin{equation}
 \ell_j(X)=\frac{2|X_j|b_j^{\mathsf T}X_{-j}}{\tau_j^2},
 \qquad
 L_\Sigma(X)=\max_{1\le j\le m}\{\ell_j(X)\}_+,
 \label{eq:gaussian-odds-max}
\end{equation}
where $\{x\}_+=\max\{x,0\}$.  Define the integrated negative-side statistic and its mean by
\begin{equation}
 \Omega_\Sigma^-(X)
 =\exp\left[
   \max\left\{0,\max_{\substack{1\le j\le m\\X_j<0}}\ell_j(X)\right\}
 \right],
 \qquad
 M_\Sigma^-=\E\{\Omega_\Sigma^-(X)\},
 \label{eq:gaussian-integrated-statistic}
\end{equation}
where the inner maximum is $-\infty$ if every coordinate is positive.  We maximize over all coordinates because the null set $\Hnull$ is unknown.  The statistic $L_\Sigma$ is the largest positive conditional log-odds across coordinates.  The statistic $\Omega_\Sigma^-$ exponentiates the largest such value among coordinates that are negative in the current draw; its mean gives the integrated factor.

\begin{corollary}[Quantile and integrated Gaussian calibration]
\label{cor:gaussian-calibration}
\label{cor:integrated-gaussian}
Let $0\le\delta<1$ and let $\kappa_{\Sigma,\delta}\ge0$ satisfy
\[
 \Prb\{L_\Sigma(X)>\kappa_{\Sigma,\delta}\}\le\delta.
\]
Then, for every mean vector $\mu$,
\begin{equation}
 \FDR(q)\le
 \min\left\{
 e^{\kappa_{\Sigma,\delta}}q+\delta,
 M_\Sigma^-q
 \right\}.
 \label{eq:gaussian-quantile-bound}
\end{equation}
Moreover, $1\le M_\Sigma^-\le1+m/2$.  The distributions of $L_\Sigma(X)$ and $\Omega_\Sigma^-(X)$ depend only on $\Sigma$; the unknown null set and nonnull means do not enter.
\end{corollary}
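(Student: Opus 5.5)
The plan is to reduce the Gaussian statement to Proposition~\ref{prop:approx-sign} by computing the conditional log-odds \eqref{eq:conditional-log-odds} in closed form and comparing them with statistics of the centered vector $X:=W-\mu\sim N_m(0,\Sigma)$.

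\emph{Conditional odds.} For a null $j$ we have $\mu_j=0$, so $X_j=W_j$ and $X_{-j}=W_{-j}-\mu_{-j}$. By the Gaussian conditioning formula, $W_j\mid W_{-j}\sim N(\nu_j,\tau_j^2)$ with $\nu_j=b_j^{\mathsf T}(W_{-j}-\mu_{-j})$. Conditioning further on $|W_j|=u$ compares the two density values $\phi\{(u-\nu_j)/\tau_j\}$ and $\phi\{(-u-\nu_j)/\tau_j\}$. This gives
\[
 \Lambda_j=\frac{2u\nu_j}{\tau_j^2}=\ell_j(X),
\]
exactly as in the two-score computation of Section~\ref{sec:valid}. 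Since $\tau_j^2>0$ by positive definiteness, both conditional sign probabilities are positive almost surely, so the positivity assumption behind \eqref{eq:integrated-odds-bound} holds. The identification $\Lambda_j=\ell_j(X)$ is the step I would check most carefully. One must confirm that conditioning on $(|W_j|,W_{-j})$ is legitimate in this density-ratio form for a nondegenerate Gaussian, and that the unknown $\mu_{-j}$ enters only through $X$. The nonnull means cancel because they appear in the conditional mean only through $W_{-j}-\mu_{-j}$.

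\emph{Quantile branch.} From the identification, $\Lambda_{\max}=\max_{j\in\Hnull}\ell_j(X)\le L_\Sigma(X)$. Hence $\Prb(\Lambda_{\max}>\kappa_{\Sigma,\delta})\le\Prb\{L_\Sigma(X)>\kappa_{\Sigma,\delta}\}\le\delta$. Applying \eqref{eq:odds-bound} with $\varepsilon=\kappa_{\Sigma,\delta}$ gives the first term of \eqref{eq:gaussian-quantile-bound}.

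\emph{Integrated branch.} For null $j$ with $W_j<0$ we have $X_j<0$ and $\Omega_j=e^{\ell_j(X)}$. Therefore
\[
 1\vee\max_{j\in\Hnull,\,W_j<0}\Omega_j\le\Omega_\Sigma^-(X)
\]
pointwise, because the right side maximizes over a superset of indices. Taking expectations gives $M_{\Hnull}^-\le M_\Sigma^-$, and \eqref{eq:integrated-odds-bound} gives the second term. Since both statistics are functions of $X\sim N_m(0,\Sigma)$ alone, their laws depend only on $\Sigma$, and neither the null set nor the nonnull means enter.

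\emph{Bounds on $M_\Sigma^-$.} The lower bound $M_\Sigma^-\ge1$ is immediate from the outer maximum with $0$. For the upper bound, use
\[
 \Omega_\Sigma^-(X)\le1+\sum_{j}\ind\{X_j<0\}e^{\ell_j(X)}.
\]
Each term is computed by applying the odds identity to the centered vector $X$, in which every coordinate plays the role of a null:
\[
 \E\bigl[\ind\{X_j<0\}e^{\ell_j(X)}\bigr]
 =\E\bigl[\Prb(X_j<0\mid|X_j|,X_{-j})\,e^{\ell_j}\bigr]
 =\E\bigl[\Prb(X_j>0\mid|X_j|,X_{-j})\bigr]
 =\tfrac12.
\]
Summing over $j$ gives $M_\Sigma^-\le1+m/2$.
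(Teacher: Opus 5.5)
Your proposal is correct and follows the paper's proof essentially step for step. You identify $\Lambda_j=\ell_j(X)$ through the Gaussian conditional density ratio, dominate $\Lambda_{\max}$ by $L_\Sigma(X)$ and the null negative-side maximum by $\Omega_\Sigma^-(X)$ to invoke Proposition~\ref{prop:approx-sign}, and bound $M_\Sigma^-$ using $\Omega_\Sigma^-\le1+\sum_j e^{\ell_j}\ind\{X_j<0\}$ together with the odds identity, which gives $\tfrac12$ per coordinate. Your explicit check that $\tau_j^2>0$ secures the positivity assumption behind \eqref{eq:integrated-odds-bound}, a point the paper leaves implicit.
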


Thus, for exact calibration factors and a target $\alpha>\delta$, one may run at
\begin{equation}
 q_{\rm run}
 =(\alpha-\delta)e^{-\kappa_{\Sigma,\delta}}
 \vee\frac{\alpha}{M_\Sigma^-}.
 \label{eq:gaussian-combined-level}
\end{equation}
For every positive-definite $\Sigma$, $M_\Sigma^-$ is finite, and $\kappa_{\Sigma,\delta}$ can be chosen finite whenever $\delta>0$; which branch of \eqref{eq:gaussian-combined-level} is larger depends on $\Sigma$.

The relevant quantile of $L_\Sigma(X)$ and the mean $M_\Sigma^-$ can be approximated using independent synthetic draws from $N_m(0,\Sigma)$; these are computer-generated draws, not reused testing data.  With finitely many draws, however, a raw empirical quantile or sample mean need not bound the population quantity in the conservative direction.  Appendix~\ref{app:finite-sample-gaussian-computation} therefore constructs finite-sample-valid one-sided upper confidence bounds: an order-statistic bound for the quantile of $L_\Sigma(X)$ and a moment-truncation bound for $M_\Sigma^-$.

The moment-truncation construction allows any prespecified moment order $s>1$ that satisfies its stated condition.  For the special choice $s=4$, that condition is $r_\star^2<1/49$.  Because $r_\star^2$ is the largest regression $R^2$ defined above, this says that less than $1/49\approx2.04\%$ of any coordinate's variance may be explained by the remaining coordinates.  This restriction belongs only to the fourth-moment implementation, not to Corollary~\ref{cor:gaussian-calibration} or its integrated Gaussian bound.  Other choices of $s$ allow different dependence ranges, although the resulting upper confidence bound can become conservative as coordinatewise predictability increases.

If $\Sigma$ is estimated from independent data, one must use calibration upper bounds that hold uniformly over a confidence set for $\Sigma$.  The confidence-set failure probability must then be added to $\eta$ in the relevant independent-calibration result and, when the two routes are combined, in the joint budget of \eqref{eq:estimated-combined-level}.

\begin{deferredproof}{C}{Proof of Corollary~\ref{cor:gaussian-calibration}}{app:proof-gaussian-calibration}
\label{app:proof-integrated-gaussian}
For a null $j$, Gaussian conditioning gives
\[
 W_j\mid W_{-j}=w_{-j}
 \sim N\{b_j^{\mathsf T}(w_{-j}-\mu_{-j}),\tau_j^2\}.
\]
Writing $X=W-\mu$, the density ratio at $|W_j|=|X_j|$ therefore gives
\[
\Lambda_j
 =\frac{2|X_j|b_j^{\mathsf T}X_{-j}}{\tau_j^2}
 \le L_\Sigma(X).
\]
Consequently,
\[
 \Prb(\Lambda_{\max}>\kappa_{\Sigma,\delta})
 \le \Prb\{L_\Sigma(X)>\kappa_{\Sigma,\delta}\}
 \le\delta,
\]
and the first assertion follows from \eqref{eq:odds-bound}.

For the integrated assertion, a null coordinate satisfies $W_j=X_j$, and Gaussian conditioning gives $\Omega_j=e^{\ell_j(X)}$.  Therefore the negative-side maximum over the unknown null set is bounded by $\Omega_\Sigma^-(X)$.  Proposition~\ref{prop:approx-sign} gives $\FDR(q)\le M_\Sigma^-q$.

To bound the factor, note that
\[
 \Omega_\Sigma^-(X)
 \le1+\sum_{j=1}^m e^{\ell_j(X)}\ind\{X_j<0\}.
\]
Conditional on $(|X_j|,X_{-j})$, the definition of the sign odds gives
\[
 e^{\ell_j(X)}\Prb(X_j<0\mid |X_j|,X_{-j})
 =\Prb(X_j>0\mid |X_j|,X_{-j}).
\]
Hence
\[
 \E\{e^{\ell_j(X)}\ind\{X_j<0\}\}
 =\Prb(X_j>0)=\frac12.
\]
Summing over $j$ proves $1\le M_\Sigma^-\le1+m/2$.
\end{deferredproof}

For the equicorrelation model \eqref{eq:equicorr},
\[
r_\star^2
 =\frac{(m-1)\rho^2}{1+(m-2)\rho}.
\]
For fixed $\rho>0$, this quantity approaches $\rho$ as $m$ grows.  Thus even small pairwise correlations can collectively make one coordinate appreciably predictable from all the others, so the uniform finite-sample bounds can be conservative.

\begin{remark}[Exact Gaussian sign-flip repair by adding noise]
\label{rem:gaussian-whitening}
A specified Gaussian model can also be used to change the scores rather than calibrate their conditional sign odds.  Let $D=\operatorname{diag}(d_1,\ldots,d_m)$ be fixed before testing, with $d_j>0$ for every $j$ and $D-\Sigma$ positive semidefinite.  Independently draw
\[
 \omega\sim N_m(0,D-\Sigma),
 \qquad
 \widetilde W=W+\omega,
 \qquad
 Z=D^{-1/2}\widetilde W.
\]
Then $Z\sim N_m(D^{-1/2}\mu,I_m)$.  Its coordinates are independent, and every null $Z_j$ is standard normal regardless of the other means.  Consequently, the null signs satisfy Assumption~\ref{ass:signflip} exactly, and knockoff+ applied to $Z$ at $q=\alpha$ controls the FDR at level $\alpha$.  Such a $D$ always exists; for example, one may take $D=\lambda_{\max}(\Sigma)I_m$.

The remaining information can still guide the ordering and anticipated directions.  Define
\[
 \xi=\Sigma^{-1}W-D^{-1}\widetilde W.
\]
Since $\operatorname{Cov}(\xi,\widetilde W)=0$, joint Gaussianity shows that $\xi$ and $\widetilde W$ are independent.  Thus the ordering and directions may depend on $(\xi,|\widetilde W|)$, but not on the signs of $\widetilde W$ used for testing.  This is the whitening representation of fixed-$X$ knockoffs studied by \citet{li-fithian-2021}; it is not a new construction here.

Exact sign flips can nevertheless be costly.  For the equicorrelation covariance used in Figure~\ref{fig:adjustment-numerics}, consider choices $D=dI_m$ that treat every coordinate alike.  The smallest feasible $d$ is
\[
 d=1+(m-1)\rho.
\]
At the endpoint $m=5000$ and $\rho=0.05$, this gives $d=250.95$ and reduces an alternative mean of $4.5$ to $4.5/\sqrt d=0.284$ after standardization.  Whitening therefore keeps the input level at $\alpha$ but can weaken the testing signs; the odds calibration reduces the input level while retaining the original scores.
\end{remark}

The covariance matrix can also be known from the study design.  For example, let $Y\sim N_n(\theta,\sigma^2I_n)$ and let $a_1,\ldots,a_m$ be prespecified nonzero contrast vectors whose normalized Gram matrix is positive definite.  Define
\[
 W_j=\frac{a_j^{\mathsf T}Y}{\|a_j\|},
 \qquad
 \Sigma_{jk}=\frac{a_j^{\mathsf T}a_k}{\|a_j\|\,\|a_k\|},
 \qquad
 \Hnull=\{j:a_j^{\mathsf T}\theta=0\}.
\]
Then $W/\sigma$ has unit marginal variances and correlation matrix $\Sigma$, which is fixed by the design before $Y$ is observed.  The Gaussian calibration applies to $W/\sigma$; because multiplying all scores by the same positive constant does not change the rejection set, the unknown scale $\sigma$ need not be known.  Thus the calibration uses neither null identities nor signal sizes.

\subsection{Why dependence information is indispensable}

The Gaussian correction works because the covariance matrix is embedded in a fully specified joint model.  The same matrix, used only as a distribution-free summary, need not say anything useful about the FDR.

\begin{proposition}[The covariance matrix alone does not calibrate the FDR]
\label{prop:no-low-order-calibration}
Fix $q\in(0,1)$.  For every $\eta>0$, there are an integer $m$ and two all-null score vectors with the same common continuous symmetric marginal distribution and the same covariance matrix: one has independent coordinates and FDR at most $q$, while the other is exchangeable and pairwise uncorrelated and has FDR greater than $1-\eta$.
\end{proposition}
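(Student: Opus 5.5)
The plan is to reuse the exchangeable construction of Theorem~\ref{thm:nearone} as the dependent vector and to pair it with its independent coupling. Fix $\eta>0$, and by Theorem~\ref{thm:nearone} choose $m$, $\pi$, $a$, $b=\pi a/(1-\pi)$ and $\sigma>0$ so that the scores $W_i=L_iZ+\sigma\varepsilon_i$ in \eqref{eq:exchangeable-factor} have knockoff+ FDR greater than $1-\eta$. That theorem already shows that these scores are exchangeable, have the common continuous symmetric marginal law
\[
F_\sigma=\pi N(0,a^2+\sigma^2)+(1-\pi)N(0,b^2+\sigma^2),
\]
and are pairwise uncorrelated. Hence their covariance matrix is $v I_m$ with $v=\pi a^2+(1-\pi)b^2+\sigma^2$.

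For the second vector we take $V_1,\ldots,V_m$ independent, each with law $F_\sigma$. The two vectors then share the same marginal distribution and the same covariance matrix $vI_m$; independence gives zero off-diagonal entries. It remains to show that the independent vector has FDR at most $q$. Since $F_\sigma$ is continuous and symmetric about zero, each $V_i$ has $|V_i|$ independent of $\operatorname{sign}(V_i)$, and the sign is a fair coin. Independence across coordinates therefore makes the signs, conditional on all magnitudes, independent fair coins. All hypotheses are null, so $\mathcal G=\sigma(|V_1|,\ldots,|V_m|)$ and Assumption~\ref{ass:signflip} holds. Proposition~\ref{prop:known} then gives $\FDR\le q$, and zero coordinates occur with probability zero.

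The only point needing care is making sure that the $m$ supplied by Theorem~\ref{thm:nearone} can be used for the independent vector as well. This is automatic, because Proposition~\ref{prop:known} holds for every $m$. Nothing else obstructs the argument: the marginal law and covariance match by construction, so the proof is essentially an assembly of Theorem~\ref{thm:nearone} and Proposition~\ref{prop:known}. The conclusion is that the marginals and the covariance matrix cannot separate an FDR of at most $q$ from one exceeding $1-\eta$.
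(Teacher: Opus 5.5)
Your proposal is correct and follows the paper's proof essentially verbatim: you take the exchangeable vector from Theorem~\ref{thm:nearone}, pair it with independent coordinates drawn from the same marginal law, and apply Proposition~\ref{prop:known} via the conditional sign-flip property. Your explicit identification of the common covariance matrix as $vI_m$ with $v=\pi a^2+(1-\pi)b^2+\sigma^2$ is a detail the paper leaves implicit.
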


\begin{proof}
Take the dependent vector from Theorem~\ref{thm:nearone} and compare it with independent coordinates drawn from its common marginal distribution.  The two vectors have the same marginals and diagonal covariance matrix.  Under independence, symmetry makes the null signs independent fair coins conditional on their magnitudes, so Proposition~\ref{prop:known} applies.  The dependent vector has FDR greater than $1-\eta$ by Theorem~\ref{thm:nearone}.
\end{proof}

Other low-dimensional summaries have similar limitations.  Proposition~\ref{prop:nongaussian} rules out a uniform bound below one half based only on PRDS, while Theorem~\ref{thm:fixedrho} shows the exact limiting value one half at every fixed positive equicorrelation $\rho$.

There is also a simple finite-dimensional obstruction.  Reducing the nominal level is nontrivial only when $q_{\rm run}\ge1/m$.  At any such level, the all-positive PRDS construction can have FDR arbitrarily close to $1/2$; if $q_{\rm run}<1/m$, the rule cannot reject.

Finally, a deterministic almost-sure odds constant is generally unavailable even in a known Gaussian model.  Write $m_j(w_{-j})$ for the conditional mean.  If
\[
 W_j\mid W_{-j}=w_{-j}
 \sim N\{m_j(w_{-j}),\tau_j^2\},
\]
then, for $u>0$,
\[
 \frac{\Prb(W_j>0\mid |W_j|=u,W_{-j}=w_{-j})}
      {\Prb(W_j<0\mid |W_j|=u,W_{-j}=w_{-j})}
 =\exp\left\{\frac{2u m_j(w_{-j})}{\tau_j^2}\right\}.
\]
Whenever $W_j$ depends on $W_{-j}$, the conditional mean is a nonconstant linear function with unbounded range.  Under a full-support Gaussian law, the smallest almost-sure upper bound (the essential supremum) of the odds ratio is therefore infinite.  The quantile bound avoids this obstruction by excluding a controlled exceptional event.  The integrated bound avoids it by averaging a negative-side odds maximum.  Both require information about the joint law that is absent from marginal symmetry or covariance alone.
\end{revision}

\FloatBarrier
\section{Numerical illustrations}
\label{sec:numerical}

\begin{revision}
Figure~\ref{fig:numerics} illustrates the main failure mechanisms at finite sample sizes.
\end{revision}

\begin{figure}[t]
 \centering
 \begin{minipage}[t]{0.47\linewidth}
  \centering
  \includegraphics[width=\linewidth]{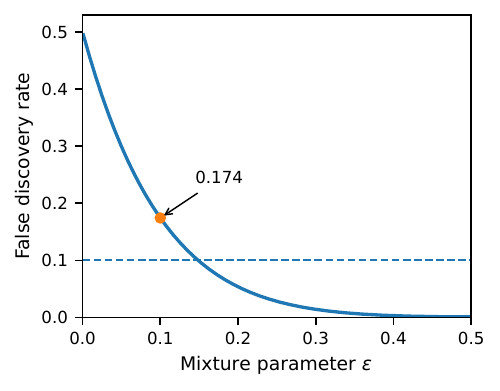}\\[-1mm]
  {\small (a) Non-Gaussian PRDS model}
 \end{minipage}
 \hfill
 \begin{minipage}[t]{0.47\linewidth}
  \centering
  \includegraphics[width=\linewidth]{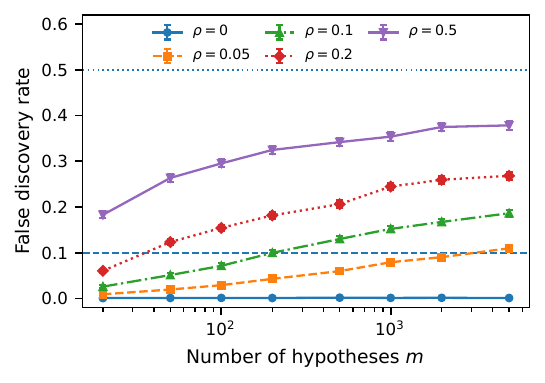}\\[-1mm]
  {\small (b) Fixed Gaussian equicorrelation}
 \end{minipage}

 \vspace{1.0ex}
 \begin{minipage}[t]{0.47\linewidth}
  \centering
  \includegraphics[width=\linewidth]{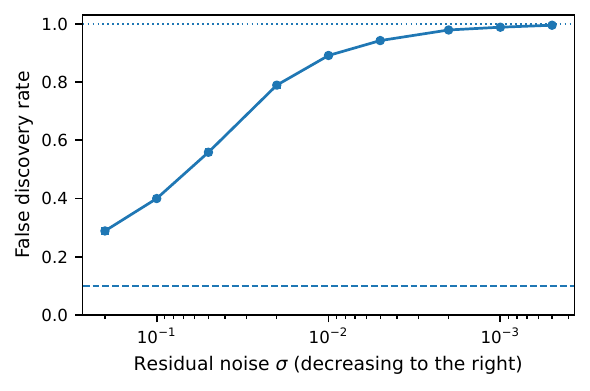}\\[-1mm]
 {\small (c) Exchangeable pairwise-uncorrelated scores}
 \end{minipage}
 \hfill
 \begin{minipage}[t]{0.47\linewidth}
  \centering
  \includegraphics[width=\linewidth]{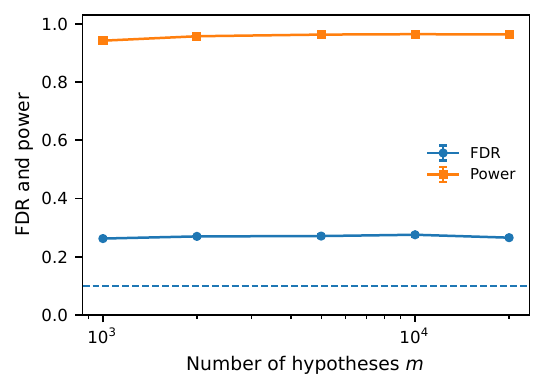}\\[-1mm]
  \rev{{\small (d) Opposing Gaussian loadings with alternatives}}
 \end{minipage}
 \caption{\rev{Finite-sample behavior at $q=0.1$.  Panels (a)--(c) are global-null; panel (d) has a $2\%$ alternative fraction.  Panel (a) is exact ($m=11$); panels (b)--(d) use $10{,}000$ Monte Carlo repetitions.  Error bars are pointwise 95\% Wilson intervals for binomial proportions in panels (b)--(c) \citep{wilson-1927} and mean $\pm1.96$ Monte Carlo standard errors in panel (d).  Dashed lines mark $q$; dotted lines mark the Theorem~\ref{thm:fixedrho} limit $1/2$ in panel (b) and the near-one limit from Theorem~\ref{thm:nearone} in panel (c), obtained by first increasing $m$ and then decreasing $\sigma$.}}
 \label{fig:numerics}
\end{figure}

\begin{revision}
Panel~(a) plots the exact PRDS calculation in \eqref{eq:prds-exact}.  Panel~(b) uses standard Gaussian scores with fixed equicorrelation.  At $\rho=0.1$, the empirical FDR rises from $0.0996$ at $m=200$ to $0.1520$ at $m=1000$; at $\rho=0.05$, it reaches $0.1096$ by $m=5000$.  For comparison, the $\rho=0$ curve is highly conservative.  The visible finite-$m$ gap from the limiting value one half is consistent with the slow one-sided rate in Remark~\ref{rem:rate}.

Panel~(c) uses \eqref{eq:exchangeable-factor} with $m=500$, $\pi=0.05$, $a=1$, and $b=1/19$, so distinct scores are uncorrelated.  As $\sigma$ decreases from $0.2$ to $0.0005$, the empirical FDR rises from $0.2887$ to $0.9949$.  The two intermediate values are $0.5587$ at $\sigma=0.05$ and $0.8910$ at $\sigma=0.01$.  At every displayed value, the coordinates have the same continuous symmetric marginal law and the joint density is positive.
\end{revision}

\begin{revision}
Panel~(d) is a moderate-dimensional illustration of the opposing-loading mechanism and its coexistence with high power.  It uses the two null blocks in Theorem~\ref{thm:gaussian-nearone} with $a=0.95$, $c=0.35$, second-block proportion $\pi=0.20$, alternative fraction $\gamma=0.02$, and alternative mean $\mu=4$.  The largest absolute null correlation is $a^2=0.9025$.  At $m=1000$, the empirical FDR and power are $0.2625$ and $0.9418$; at $m=20000$, they are $0.2656$ and $0.9631$.  It does not demonstrate near-total FDR or the theorem's arbitrary-small-correlation limit; Remark~\ref{rem:gaussian-nearone-dimension} explains why.
\end{revision}

\FloatBarrier
\begin{newrevision}
\subsection{Finite-sample calibration comparisons}
\label{subsec:calibration-comparisons}
Figure~\ref{fig:adjustment-numerics} compares the two adjustments in settings where correction is genuinely needed.

\begin{figure}[!htbp]
 \color{black}
 \centering
 \includegraphics[width=\linewidth]{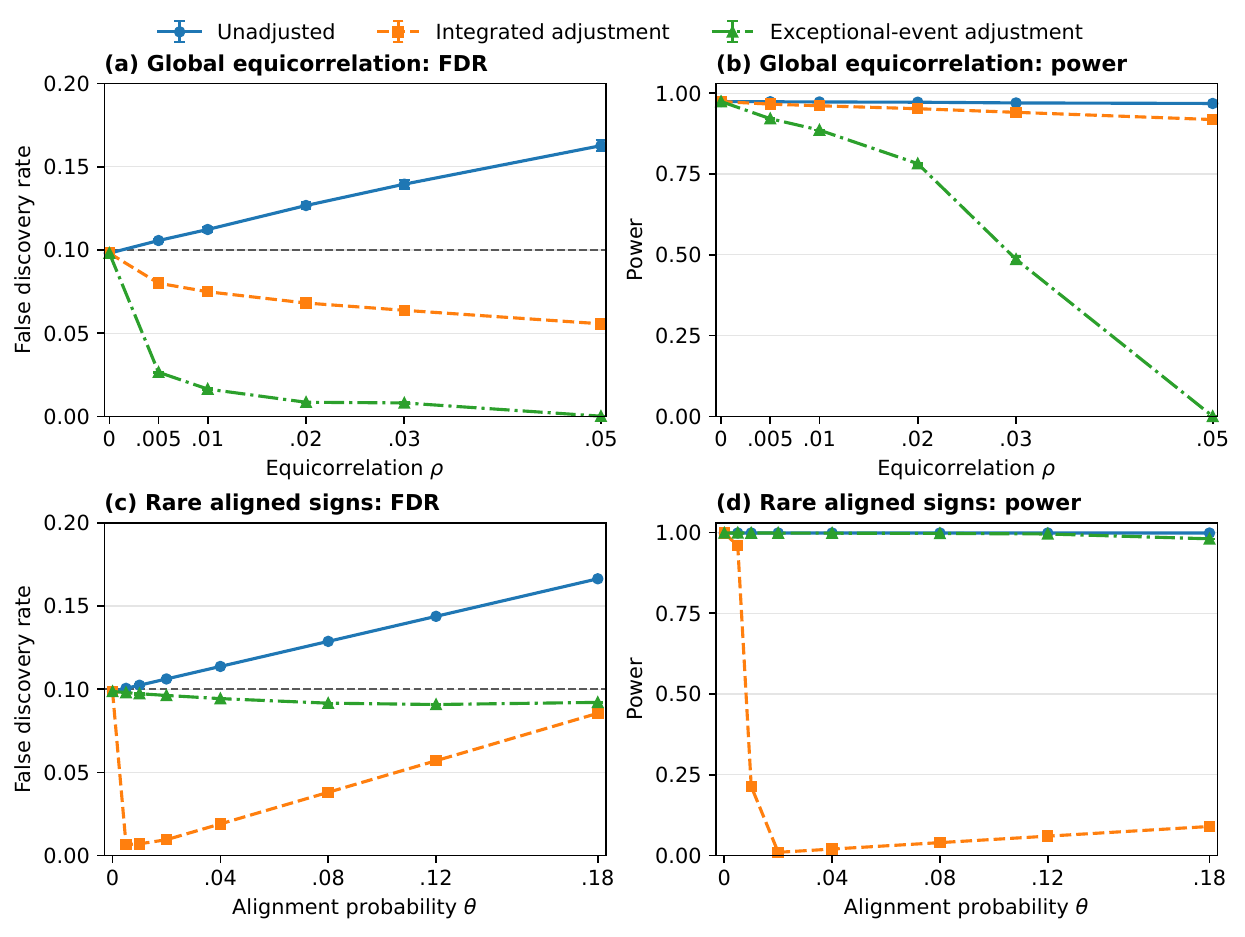}
 \caption{Calibration comparisons at target level $0.1$.  The top row uses global Gaussian equicorrelation; the bottom row uses the rare aligned-sign mixture.  Both models have $m=5000$ and 250 alternatives; Appendix~\ref{app:finite-sample-gaussian-computation} gives the complete model and computational details.  The columns show FDR and power.  Curves correspond to the unadjusted, integrated, and exceptional-event procedures.  Top-row values and error bars condition on the realized independent calibration sample; the finite-sample Gaussian guarantees average over both calibration and testing randomness.  The bottom row uses the exact population factors and stratified Monte Carlo.  Error bars show the mean $\pm1.96$ Monte Carlo standard errors, and dashed horizontal lines mark the target FDR.}
 \label{fig:adjustment-numerics}
\end{figure}

The two adjusted procedures are plotted separately to show what each bound contributes.  In practice, when both factors are known, or when their estimates are valid on one joint calibration event independent of the testing scores, one may run at $q_{\rm run}=q_{\rm int}\vee q_{\rm tail}$ as in \eqref{eq:adjusted-q} and \eqref{eq:estimated-combined-level}.  This choice uses only calibration information, not testing outcomes.  Because the rejection set is monotone in $q$, the jointly valid maximum gives the larger rejection set.  We do not add a fourth curve: for the separately calibrated levels in Figure~\ref{fig:adjustment-numerics}, the maximum coincides with the integrated curve at every positive $\rho$ and with the exceptional-event curve at every positive $\theta$; all three procedures coincide at independence.  Separate calibration failure probabilities must be included in the common budget, and the maximum must then be formed from the resulting recalibrated levels; Appendix~\ref{app:finite-sample-gaussian-computation} explains this point for the displayed Gaussian branches.

At $\rho=0.05$, the unadjusted Gaussian FDR is $0.1627$.  Integrated calibration has FDR $0.0556$ and power $0.9188$, whereas exceptional-event power is $0.0002$.  As an unplotted diagnostic, we replayed the same $10{,}000$ Gaussian testing draws and added independent noise for the exact repair in Remark~\ref{rem:gaussian-whitening}.  At $q=0.1$, the exploration-assisted score $W_j^{\rm wh}\propto\xi_j\widetilde W_j$ had power $0.0058$; even the infeasible benchmark $W_j^{\rm oracle}=\mu_j\widetilde W_j$, which knows the support and effect directions, had power $0.0081$.  This diagnostic uses the equal-diagonal choice $D=dI_m$ and does not compare all unequal-diagonal completions.  At $\theta=0.18$, the unadjusted aligned-sign FDR is $0.1664$.  Exceptional-event calibration has FDR $0.0921$ and power $0.9805$, whereas integrated power is $0.0900$.  Both adjustments keep the empirical FDR below $0.1$ throughout both sweeps.  Which bound is useful depends on whether sign predictability is diffuse or concentrated in a rare region.

\FloatBarrier
\end{newrevision}

\begin{revision}
\section{Implications and discussion}
\label{sec:discussion}

The central issue is whether the remaining data can predict a null score's sign after its magnitude and the genuine-signal scores are fixed.  If so, aligned signs can select a cutoff with too many false discoveries; marginal symmetry and small pairwise correlations do not rule this out.

The results do not say that every mirror method fails under dependence.  Validity comes from how the scores are constructed, not from the two-count formula alone.  The model-X knockoff filter obtains conditional sign flips by construction; other procedures may use sample splitting or a specified joint model.  Without comparable information, the adaptive cutoff is not justified.

When exact sign flips are unavailable, valid repairs require joint sign information: a global sign-law discrepancy, an exceptional-event odds bound, or a negative-side integrated odds bound.  Proposition~\ref{prop:integrated-sharp} shows that neither integrated factor is merely a proof artifact.  Without joint sign information, no count-only rule studied here can both make discoveries and control the FDR uniformly.
\end{revision}

\FloatBarrier
\section*{Acknowledgments}
The author used OpenAI GPT-5.6 (Sol) and Anthropic Claude (Opus 5.0) for language editing, proof exploration, and the numerical studies.  The author checked every definition, proof, and numerical result and is responsible for the paper.

\appendix

\begin{revision}
\section{First-passage proofs}
\label{app:first-passage}
\end{revision}
\printappendixproofs{A}

\begin{revision}
\section{Fixed-correlation and near-total FDR proofs}
\label{app:nearone}
\end{revision}
\printappendixproofs{B}

\begin{newrevision}
\section{Sign-odds proofs and finite-sample calibration}
\label{app:gaussian-calibration}
\end{newrevision}
\printappendixproofs{C}

\begin{newrevision}
\subsection{Finite-sample calibration and computation}
\label{app:finite-sample-gaussian-computation}

Recall that $L_\Sigma(X)$ is the Gaussian log-odds maximum, $\Omega_\Sigma^-(X)$ is the negative-side odds statistic, and $r_\star^2$ is the largest multiple-regression $R^2$.

\begin{corollary}[Finite-sample Gaussian Monte Carlo calibration]
\label{cor:finite-sample-gaussian}
Let $B\ge2$.  All synthetic draws below are independent of the testing scores.
\begin{enumerate}
 \item Fix $\delta,\eta\in(0,1)$.  Generate independent values $L^{(1)},\ldots,L^{(B)}$ from the law of $L_\Sigma(X)$ and write $L_{(1)}\le\cdots\le L_{(B)}$.  If there is a $k\in\{1,\ldots,B\}$ satisfying
 \begin{equation}
  \Prb\{\operatorname{Bin}(B,1-\delta)\ge k\}\le\eta,
  \label{eq:quantile-order-statistic-condition}
 \end{equation}
 let $k$ be the smallest such integer and set $\widehat\kappa=L_{(k)}$.  If no such integer exists, set $\widehat\kappa=\infty$.  Then
 \[
  \Prb\left[
    \Prb\{L_\Sigma(X)>\widehat\kappa\}\le\delta
  \right]\ge1-\eta.
 \]
 Here the outer probability is over the $B$ calibration draws, whereas the inner probability is over a fresh centered Gaussian draw $X$.  A finite qualifying order statistic exists exactly when $(1-\delta)^B\le\eta$.  For $\alpha>\delta+\eta$, the resulting finite-sample-valid testing level is
 \[
  q_{\rm tail}=(\alpha-\delta-\eta)e^{-\widehat\kappa},
 \]
 with $e^{-\infty}=0$.

 \item Fix $\eta\in(0,1)$.  Let $\mathcal S\subset(1,\infty)$ and $\mathcal C\subset[1,\infty)$ be finite prespecified sets of moment orders and truncation levels.  Define the admissible pair set
 \[
  \mathcal P_\Sigma
  =\left\{(s,h)\in\mathcal S\times\mathcal C:
    4s(s-1)r_\star^2<1-r_\star^2
   \right\},
  \qquad J=|\mathcal S|\,|\mathcal C|.
 \]
 For each $s\in\mathcal S$ satisfying $4s(s-1)r_\star^2<1-r_\star^2$, put
 \begin{equation}
  C_{s,\Sigma}
  =1+\frac12\sum_{j=1}^m
  \left(
    1-\frac{4s(s-1)r_j^2}{1-r_j^2}
  \right)^{-1/2}.
  \label{eq:gaussian-moment-bound}
 \end{equation}
 Generate independent values $Y^{(1)},\ldots,Y^{(B)}$ from the law of $\Omega_\Sigma^-(X)$.  For $h\in\mathcal C$, set
 \[
  Z_h^{(b)}=Y^{(b)}\wedge h,
  \qquad
  \overline Z_h=\frac1B\sum_{b=1}^B Z_h^{(b)},
  \qquad
  S_h^2=\frac1{B-1}\sum_{b=1}^B(Z_h^{(b)}-\overline Z_h)^2.
 \]
 If $\mathcal P_\Sigma\ne\varnothing$, define
 \begin{equation}
  \widehat M_{\Sigma,U}
  =1\vee\min_{(s,h)\in\mathcal P_\Sigma}
  \left\{
   \overline Z_h
   +\sqrt{\frac{2S_h^2\log(2J/\eta)}{B}}
   +\frac{7h\log(2J/\eta)}{3(B-1)}
   +\frac{C_{s,\Sigma}}{(s-1)h^{s-1}}
  \right\};
  \label{eq:integrated-monte-carlo-ucb}
 \end{equation}
 For each candidate pair, the four displayed terms are the truncated sample mean, an empirical-Bernstein variance margin, a bounded-range margin, and a deterministic bound on the omitted tail, respectively.
 If $\mathcal P_\Sigma=\varnothing$, set $\widehat M_{\Sigma,U}=\infty$.  Then
 \[
 \Prb(M_\Sigma^-\le\widehat M_{\Sigma,U})\ge1-\eta.
 \]
 Here the probability is over the $B$ synthetic calibration draws.
 For a target $\alpha>\eta$, the level
\begin{equation}
 q_{\rm int}=\frac{\alpha-\eta}{\widehat M_{\Sigma,U}}
 \label{eq:integrated-finite-sample-level}
 \end{equation}
 controls the FDR at $\alpha$, with $q_{\rm int}=0$ when $\widehat M_{\Sigma,U}=\infty$.
\end{enumerate}
The two estimated levels may be maximized only when both upper-bound events hold simultaneously, with their total failure probability charged as in \eqref{eq:estimated-combined-level}.
\end{corollary}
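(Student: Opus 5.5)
For part 1, I would use the standard distribution-free order-statistic upper bound for a quantile. Let $F$ be the distribution function of $L_\Sigma(X)$, and define $\kappa^\ast=\inf\{x:\Prb\{L_\Sigma(X)>x\}\le\delta\}$. Right-continuity of $F$ gives $\Prb\{L_\Sigma(X)>\kappa^\ast\}\le\delta$, and the inner tail probability is nonincreasing in the cutoff. So the failure event $\{\Prb(L_\Sigma>\widehat\kappa)>\delta\}$ is contained in $\{\widehat\kappa<\kappa^\ast\}=\{L_{(k)}<\kappa^\ast\}$. That event occurs exactly when at least $k$ draws fall strictly below $\kappa^\ast$. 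Each draw does so with probability $p=\Prb(L_\Sigma<\kappa^\ast)\le1-\delta$, by the definition of $\kappa^\ast$ as an infimum. Stochastic monotonicity of the binomial in $p$ then bounds the failure probability by $\Prb\{\operatorname{Bin}(B,1-\delta)\ge k\}\le\eta$. The case $\widehat\kappa=\infty$ is trivial. The existence criterion comes from taking $k=B$: the condition reads $(1-\delta)^B\le\eta$, and the binomial tail is monotone in $k$. The level statement then follows from Corollary~\ref{cor:estimated-gamma}, with $\widehat\Gamma=e^{\widehat\kappa}$, together with the Gaussian bound $\Lambda_{\max}\le L_\Sigma(X)$ from the proof of Corollary~\ref{cor:gaussian-calibration}.

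For part 2, I would split $M_\Sigma^-=\E(Y\wedge h)+\E\{(Y-h)_+\}$. For the first term, the $Z_h^{(b)}$ are i.i.d. in $[0,h]$, so the empirical-Bernstein inequality of Maurer and Pontil gives, for each fixed pair, $\E(Y\wedge h)\le\overline Z_h+\sqrt{2S_h^2\log(2/\eta')/B}+7h\log(2/\eta')/\{3(B-1)\}$ with probability at least $1-\eta'$. I would apply it with $\eta'=\eta/J$ and take a union bound over all $J$ pairs. I would state the inequality with its constants adjusted to this form; I would verify that the $\log(2J/\eta)$ convention matches the one-sided version or is conservative. For the tail term, Markov gives $\E(Y-h)_+\le\int_h^\infty\Prb(Y>y)\,dy\le\E Y^s\int_h^\infty y^{-s}dy=\E Y^s/\{(s-1)h^{s-1}\}$. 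On the simultaneous event, the minimum over admissible pairs is therefore an upper bound. The clamp at $1$ is harmless because $M_\Sigma^-\ge1$.

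The main obstacle is the moment bound $\E\{\Omega_\Sigma^-(X)^s\}\le C_{s,\Sigma}$. I would start from $\Omega^s\le1+\sum_j e^{s\ell_j}\ind\{X_j<0\}$, following the proof of Corollary~\ref{cor:gaussian-calibration}. Then I would compute each term exactly. Write $X_j=U$ and $V=b_j^{\mathsf T}X_{-j}$, which has variance $r_j^2$. Conditioning instead on $X_{-j}$, we have $X_j\mid V\sim N(V,\tau_j^2)$ with $\tau_j^2=1-r_j^2$. On $\{X_j<0\}$ the exponent is $s\ell_j=-2sX_jV/\tau_j^2$. Integrating the Gaussian in $X_j$ over the negative half-line and bounding the resulting normal-cdf factor by one gives a quadratic exponent of the form $\exp\{2s(s-1)V^2/\tau_j^2\}$. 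Taking the expectation over $V\sim N(0,r_j^2)$ then yields $(1-4s(s-1)r_j^2/\tau_j^2)^{-1/2}$, with a factor $1/2$ coming from the half-line (this is the delicate constant I would check carefully). The admissibility condition $4s(s-1)r_\star^2<1-r_\star^2$ is exactly finiteness of this integral for all $j$, which is monotone in $r_j$. Summing over $j$ gives \eqref{eq:gaussian-moment-bound}. Finally, the level \eqref{eq:integrated-finite-sample-level} follows from Corollary~\ref{cor:estimated-integrated-factor}, using the Gaussian integrated bound $\FDR\le M_\Sigma^-q$.
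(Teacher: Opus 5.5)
\textbf{Gap in the moment bound.} Your route matches the paper's. The same order-statistic argument handles the quantile: your set $\{x<\kappa^\ast\}$ is exactly the paper's $\{x:F(x)<1-\delta\}$. $M_\Sigma^-$ is split the same way, into a truncated mean controlled by Maurer--Pontil with a union bound and a Markov tail term. The reduction $\{\Omega_\Sigma^-(X)\}^s\le1+\sum_j e^{s\ell_j(X)}\ind\{X_j<0\}$ is also the same.

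The step that fails as you describe it is the source of the factor $1/2$. Integrating over the half-line $\{X_j<0\}$ given $V=v$ is exactly what produces the normal-cdf factor:
\[
 \E\left[e^{s\ell_j(X)}\ind\{X_j<0\}\mid V=v\right]
 =\exp\left\{\frac{2s(s-1)v^2}{\tau_j^2}\right\}
 \Phi\left(\frac{(2s-1)v}{\tau_j}\right).
\]
Once you bound $\Phi$ by one, the half-line has been used up and nothing else supplies a $1/2$. You then get only $\E[e^{s\ell_j}\ind\{X_j<0\}]\le(1-4s(s-1)r_j^2/\tau_j^2)^{-1/2}$, hence $\E\{(\Omega_\Sigma^-)^s\}\le 2C_{s,\Sigma}-1$. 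That is still a valid, more conservative upper confidence bound. It does not, however, justify the $\widehat M_{\Sigma,U}$ defined with $C_{s,\Sigma}$ in \eqref{eq:integrated-monte-carlo-ucb}, so the statement as written is not proved.

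\textbf{The fix.} Keep $\Phi$ and use symmetry. Put $g(v)=\exp\{2s(s-1)v^2/\tau_j^2\}$ and $c=(2s-1)/\tau_j$. Since $g$ is even, $V\sim N(0,r_j^2)$ is symmetric, and $\Phi(z)+\Phi(-z)=1$, you have $\E[g(V)\Phi(cV)]=\E[g(V)\Phi(-cV)]$. Each of these therefore equals $\tfrac12\E[g(V)]$. The Gaussian square-moment formula then gives exactly $\tfrac12\{1-4s(s-1)r_j^2/(1-r_j^2)\}^{-1/2}$, which is \eqref{eq:gaussian-odds-moment} in the paper's proof and yields $C_{s,\Sigma}$.

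With that correction the rest of your argument goes through as in the paper. This includes the one-sided $\log(2J/\eta)$ form of the empirical-Bernstein bound, which is the Maurer--Pontil bound rescaled from $[0,1]$ to $[0,h]$. It also includes the final appeals to Corollaries~\ref{cor:estimated-gamma} and~\ref{cor:estimated-integrated-factor}, via $\Lambda_{\max}\le L_\Sigma(X)$ and $M_{\Hnull}^-\le M_\Sigma^-$.
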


\begin{proof}
For the first assertion, let $F$ be the distribution function of $L_\Sigma(X)$, put $p=1-\delta$, and define $A=\{x:F(x)<p\}$.  Its probability under the law of $L_\Sigma(X)$ is at most $p$.  Because $F$ is nondecreasing, $A$ is an interval extending to the left.  Failure of the asserted tail inequality implies $F(\widehat\kappa)<p$.  If $\widehat\kappa=L_{(k)}$, then $L_{(k)}\in A$, so at least $k$ of the $B$ calibration draws lie in $A$.  The failure probability is therefore at most $\Prb\{\operatorname{Bin}(B,p)\ge k\}\le\eta$ by \eqref{eq:quantile-order-statistic-condition}.  If no $k$ exists, $\widehat\kappa=\infty$ and the tail inequality is automatic.  Since the smallest possible binomial tail over $k\in\{1,\ldots,B\}$ is its value at $k=B$, a finite $k$ exists exactly when $p^B=(1-\delta)^B\le\eta$.  On the upper-bound event, set $\widehat\Gamma=e^{\widehat\kappa}$; Corollary~\ref{cor:estimated-gamma} then gives the stated testing level.

For the integrated assertion, let $V_j=b_j^{\mathsf T}X_{-j}=\E(X_j\mid X_{-j})$.  Then $V_j\sim N(0,r_j^2)$ and $X_j=V_j+\varepsilon_j$, where the residual $\varepsilon_j\sim N(0,1-r_j^2)$ is independent of $V_j$.  For every admissible $s>1$, completing the square conditional on $V_j=v$ gives
\begin{align*}
 &\E\left[e^{s\ell_j(X)}\ind\{X_j<0\}\mid V_j=v\right]\\
 &\qquad=
 \exp\left\{\frac{2s(s-1)v^2}{1-r_j^2}\right\}
 \Phi\left(\frac{(2s-1)v}{\sqrt{1-r_j^2}}\right).
\end{align*}
The first factor is even in $v$, while $\Phi(z)+\Phi(-z)=1$.  Symmetry of $V_j$ and the Gaussian square-moment formula therefore give
\begin{equation}
 \E\left[e^{s\ell_j(X)}\ind\{X_j<0\}\right]
 =\frac12
 \left(
  1-\frac{4s(s-1)r_j^2}{1-r_j^2}
 \right)^{-1/2}.
 \label{eq:gaussian-odds-moment}
\end{equation}
Since
\[
 \{\Omega_\Sigma^-(X)\}^s
 \le1+\sum_{j=1}^m e^{s\ell_j(X)}\ind\{X_j<0\},
\]
equations \eqref{eq:gaussian-moment-bound} and \eqref{eq:gaussian-odds-moment} imply
$\E\{(\Omega_\Sigma^-)^s\}\le C_{s,\Sigma}$.

For a fixed pair $(s,h)$, the empirical Bernstein inequality of \citet{maurer-pontil-2009}, applied to $Z_h^{(b)}\in[0,h]$, gives with failure probability at most $\eta/J$,
\[
 \E(Z_h)
 \le\overline Z_h
 +\sqrt{\frac{2S_h^2\log(2J/\eta)}{B}}
 +\frac{7h\log(2J/\eta)}{3(B-1)}.
\]
Moreover,
\[
 M_\Sigma^- -\E(Z_h)
 =\int_h^\infty\Prb(\Omega_\Sigma^->t)\,dt
 \le\frac{\E\{(\Omega_\Sigma^-)^s\}}
          {(s-1)h^{s-1}}
 \le\frac{C_{s,\Sigma}}{(s-1)h^{s-1}}.
\]
The first inequality is Markov's inequality applied inside the tail integral.
A union bound makes these inequalities simultaneous over the prespecified admissible pairs.  Taking their minimum and then taking the maximum with one proves \eqref{eq:integrated-monte-carlo-ucb}.  On the event $M_\Sigma^-\le\widehat M_{\Sigma,U}$, Corollary~\ref{cor:gaussian-calibration} bounds the unknown-null factor by $M_{\Hnull}^-\le M_\Sigma^-\le\widehat M_{\Sigma,U}$.  Equation~\eqref{eq:integrated-finite-sample-level} then follows from Corollary~\ref{cor:estimated-integrated-factor}.
\end{proof}

Equivalently, the admissibility condition is $r_\star^2<\{1+4s(s-1)\}^{-1}$.  For $s=4$, this is $r_\star^2<1/49$ with tail remainder $C_{4,\Sigma}/(3h^3)$; for $s=2$, the weaker condition $r_\star^2<1/9$ comes with the slower $h^{-1}$ remainder.  Values of $s$ closer to one allow larger $r_\star^2$ but further slow the remainder.  This restriction concerns only the moment bound used here; Corollary~\ref{cor:gaussian-calibration} remains valid for every positive-definite $\Sigma$.

\paragraph{Computation for Figure~\ref{fig:adjustment-numerics}.}
Both models use $m=5000$, target $\alpha=0.1$, 4750 nulls, and 250 alternatives.  The Gaussian row uses $10{,}000$ testing samples per point; the stratified Monte Carlo sizes for the aligned-sign row are given below.

\emph{Global equicorrelation.}
The top row uses
\[
 W_i=\mu_i+\sqrt\rho\,G_0+\sqrt{1-\rho}\,Z_i,
 \qquad
 \rho\in\{0,0.005,0.010,0.020,0.030,0.050\},
\]
where $G_0,Z_1,\ldots,Z_m$ are independent standard normal variables, null means are zero, and alternative means are $4.5$.
For every positive $\rho$, both adjustments use the same $B=200{,}000$ independent centered Gaussian calibration draws; each displayed branch has failure budget $\eta=0.001$.  The calibration sample is held fixed across the testing repetitions.  At $\rho=0$, exact conditional sign flips justify input level $0.1$ for all three procedures.

For computation, let $T_X=\sum_{j=1}^mX_j$, $X_{(1)}=\min_jX_j$, and
\[
 c_{\rho,m}=\frac{2\rho}{(1-\rho)\{1+(m-1)\rho\}}.
\]
For negative coordinates the relevant expression is convex, so its maximum occurs at $X_{(1)}$.  Gaussian regression therefore gives
\[
 \begin{aligned}
 L_\Sigma(X)
 &=\max\left\{0,\max_j c_{\rho,m}|X_j|(T_X-X_j)\right\},\\
 \Omega_\Sigma^-(X)
 &=\exp\left([c_{\rho,m}\{X_{(1)}^2-T_X X_{(1)}\}]_+
             \times\ind\{X_{(1)}<0\}\right).
 \end{aligned}
\]
Both statistics require $O(m)$ operations per draw.  For the integrated bound, we prespecify
\[
 \begin{aligned}
 \mathcal S&=\{2,2.25,2.5,2.75,3,3.25,3.5,4\},\\
 \mathcal C&=\{25,50,75,100,150,200,300,500,750,1000,1500,
 2000,3000,5000,7500,10000\}.
 \end{aligned}
\]
Here $J=|\mathcal S||\mathcal C|=128$ in the union bound; only pairs satisfying the moment condition in Corollary~\ref{cor:finite-sample-gaussian} enter the minimum.  Exceptional-event calibration uses
\[
 \Delta=\{0.0025,0.005,0.0075,0.01,0.015,0.02,0.025,0.03,
          0.04,0.05,0.06,0.075,0.09\},
\]
with failure probability $\eta/13$ assigned to each order-statistic bound.
For each $\delta\in\Delta$, compute $q_\delta=(0.1-\delta-0.001)e^{-\widehat\kappa_\delta}$ using the order-statistic bound $\widehat\kappa_\delta$ from Corollary~\ref{cor:finite-sample-gaussian}.
Set $\widehat\delta\in\arg\max_{\delta\in\Delta}q_\delta$, $\widehat\kappa=\widehat\kappa_{\widehat\delta}$, $q_{\rm tail}=q_{\widehat\delta}$, and $q_{\rm int}=(0.1-0.001)/\widehat M_{\Sigma,U}$.
For the five positive correlations, in increasing order, the realized values, rounded as shown, are
\[
\begin{array}{c|ccccc}
 \rho&.005&.010&.020&.030&.050\\ \hline
 \widehat M_{\Sigma,U}&1.305&1.497&1.905&2.378&3.685\\
 (s,h)&(4,75)&(4,75)&(4,100)&(3.25,200)&(2.5,1000)\\
 q_{\rm int}&.07584&.06613&.05198&.04162&.02686\\
 (\widehat\delta,\widehat\kappa)
 &(.020,1.12997)&(.025,1.55475)&(.040,1.99635)&(.040,2.47982)&(.050,3.09528)\\
 q_{\rm tail}&.02552&.01563&.00801&.00494&.00222
\end{array}
\]
\emph{Rare aligned signs.}
For the bottom row, with $n_0=4750$, draw independent $U_j\sim|N(0,1)|$ and, independently, let the null-sign vector satisfy
\[
 S=\begin{cases}
  \text{independent fair signs},&\text{with probability }1-\theta,\\
  (+1,\ldots,+1),&\text{with probability }\theta/2,\\
  (-1,\ldots,-1),&\text{with probability }\theta/2,
 \end{cases}
 \qquad W_j=S_jU_j.
\]
The alternatives are independent $N(5.5,1)$ variables, and we use
\[
 \theta\in\{0,0.005,0.01,0.02,0.04,0.08,0.12,0.18\}.
\]
Global sign symmetry gives standard-normal null margins, while the independent-sign component gives every sign vector positive probability.

For $\theta>0$, let $c_\theta=(1-\theta)2^{-n_0}$.  Direct conditioning shows that odds above one occur only for the all-positive and one-negative sign vectors.  Consequently,
\[
 \begin{gathered}
 \delta_\theta=\frac{\theta}{2}+(n_0+1)c_\theta,
 \qquad \Gamma_{\delta_\theta}=1,
 \qquad M_{\Hnull}^-=1+\frac{n_0\theta}{2},\\
 q_{\rm tail}=0.1-\delta_\theta,
 \qquad q_{\rm int}=\frac{0.1}{1+n_0\theta/2}.
 \end{gathered}
\]
At $\theta=0$, the signs are independent fair coins, so all three procedures instead use the exact level $0.1$.  Because the factors above are known exactly, their maximum needs no calibration failure budget.

For each positive $\theta$, we simulate $10{,}000$ samples from each mixture component.  Their means use weights $1-\theta$, $\theta/2$, and $\theta/2$, and their Monte Carlo variances use the squared weights.  At $\theta=0.18$, $q_{\rm tail}\approx0.01$, whereas $M_{\Hnull}^-=428.5$ and $q_{\rm int}\approx0.000233$; this large difference explains the bottom-row power gap.
\end{newrevision}

\begin{revision}
\section{Calibration with negative-control datasets}
\label{app:negative-control-calibration}
Apply the score rule to $B$ mutually independent all-null datasets, independent of $W$, to obtain $W^{(1)},\ldots,W^{(B)}$; separate negative-control experiments are one example.  Coordinate $j$ is the same feature throughout, so these are new datasets, not recalculations or resamples.  Coordinates within a vector may depend.  Set $\mathcal D_{\rm cal}=(W^{(1)},\ldots,W^{(B)})$.  When the labels $C_j$ below are prespecified, take $\mathcal D=\mathcal D_{\rm cal}$.

Let $C_j=c_j(|W_j|,W_{-j})\in\{1,\ldots,K_j\}$ be a finite label for information that may affect the sign, and let $C_j^{(b)}$ be its control version.  Assume the same $p_{jk}$ satisfies
\begin{align*}
 \Prb(W_j>0\mid |W_j|,W_{-j})
 &=p_{j,C_j}, && j\in\Hnull,\\*
 \Prb(W_j^{(b)}>0\mid |W_j^{(b)}|,W_{-j}^{(b)})
 &=p_{j,C_j^{(b)}}, && j=1,\ldots,m,\ b=1,\ldots,B.
\end{align*}
This condition says that $C_j$ retains all conditional sign information within each cell and that the same $p_{jk}$ governs the testing and control data.  A majority-sign label is one example, but controls alone do not ensure the condition.

For each fixed $j$, let $N_{jk}$ count controls in cell $(j,k)$ and $X_{jk}$ count their positive signs:
\[
 \begin{aligned}
 N_{jk}&=\sum_{b=1}^B\ind\{C_j^{(b)}=k\},
 &X_{jk}&=\sum_{b=1}^B\ind\{C_j^{(b)}=k,\ W_j^{(b)}>0\},\\
 X_{jk}\mid C_j^{(1)},\ldots,C_j^{(B)}
 &\sim\operatorname{Bin}(N_{jk},p_{jk}).
 \end{aligned}
\]
This uses independence across datasets, not coordinates; other coordinates' labels could reveal the sign and are not conditioned upon.

Choose $\eta_{jk}\ge0$ before seeing the signs, with $\sum_{j,k}\eta_{jk}\le\eta$.  An exact upper confidence limit is
\[
 U_{jk}=
 \begin{cases}
  \operatorname{Beta}^{-1}
  (1-\eta_{jk};X_{jk}+1,N_{jk}-X_{jk}),
    &N_{jk}>0,\ X_{jk}<N_{jk},\ \eta_{jk}>0,\\
 1,&\text{otherwise}.
 \end{cases}
\]
Here $\operatorname{Beta}^{-1}(a;r,s)$ is the $a$-quantile of $\operatorname{Beta}(r,s)$.  Conditional on coordinate $j$'s labels, $\Prb(U_{jk}<p_{jk})\le\eta_{jk}$ exactly, without a normal approximation.  The second branch covers empty or all-positive cells and $\eta_{jk}=0$.

For $A=\{p_{jk}\le U_{jk}\text{ for all }j,k\}$, the union bound gives $\Prb(A)\ge1-\eta$ without independence across cells.  Since scores are nonzero, $p/(1-p)$ is the positive-to-negative sign odds.  Define
\[
 \widehat\Gamma
 =\max_{j,k}\left\{1,\frac{U_{jk}}{1-U_{jk}}\right\},
 \qquad
 \Omega_j
 =\frac{p_{j,C_j}}{1-p_{j,C_j}}
 \le\frac{U_{j,C_j}}{1-U_{j,C_j}}
 \le\widehat\Gamma,
\]
where $U/(1-U):=+\infty$ at $U=1$; thus any such cell gives $\widehat\Gamma=\infty$ and no rejections.  On $A$ and for finite $\widehat\Gamma$, $\Prb(\Lambda_{\max}>\log\widehat\Gamma\mid\mathcal D)=0$, so Corollary~\ref{cor:estimated-gamma} applies with $\delta=0$.  This construction implements only the exceptional-event branch; in the combined notation one sets $\widehat M=\infty$.

If labels are learned on training data independent of calibration and testing, take $\mathcal D=(\mathcal D_{\rm train},\mathcal D_{\rm cal})$ and condition on training.  An estimator of auxiliary model quantities shared by calibration and testing needs a separate conditional-independence justification; otherwise, use separate fits.
\end{revision}

\end{document}